\newtheorem{thm}{Theorem}[section]
\newtheorem{lm}[thm]{Lemma}
\newtheorem{co}[thm]{Corollary}
\newtheorem{pr}[thm]{Proposition}
\theoremstyle{definition}
\newtheorem{df}[thm]{Definition}
\newtheorem{exm}[thm]{Example}
\newtheorem{rem}[thm]{Remark}
\numberwithin{equation}{section}
\DeclareMathOperator{\Ker}{Ker}
\DeclareMathOperator{\LinC}{Lin_\CC}
\newcommand*{\Ptens}{\mathop{\widehat\otimes}}
\newcommand*{\wh}{\widehat}
\newcommand*{\wt}{\widetilde}
\newcommand*{\spn}{\mathrm{span}}
\newcommand{\Rad}{\mathop{\mathrm{Rad}}\nolimits}
\newcommand{\rad}{\mathop{\mathrm{rad}}\nolimits}
\newcommand{\ad}{\mathop{\mathrm{ad}}\nolimits}
\newcommand{\GL}{\mathop{\mathrm{GL}}\nolimits}
\newcommand{\SL}{\mathop{\mathrm{SL}}\nolimits}
\newcommand{\acv}{\mathop{\mathrm{a.c.}}\nolimits}
\newcommand*{\cA}{\mathscr A}
\newcommand{\CC}{\mathbb{C}}
\newcommand{\R}{\mathbb{R}}
\newcommand{\Z}{\mathbb{Z}}
\newcommand{\N}{\mathbb{N}}
\newcommand{\cO}{\mathcal{O}}
\newcommand*{\cR}{\mathcal R}
\newcommand*{\fb}{\mathfrak{b}}
\newcommand*{\fj}{\mathfrak{j}}
\newcommand*{\fe}{\mathfrak{e}}
\newcommand*{\fg}{\mathfrak{g}}
\newcommand*{\fl}{\mathfrak{l}}
\newcommand*{\fh}{\mathfrak{h}}
\newcommand*{\fr}{\mathfrak{r}}
\newcommand*{\fs}{\mathfrak{s}}
\newcommand*{\ft}{\mathfrak{t}}
\newcommand*{\fv}{\mathfrak{v}}
\renewcommand{\le}{\leqslant}
\renewcommand{\ge}{\geqslant}
\let \de         =\delta
\let \ze         =\zeta
\let \te         =\theta
\let \io         =\iota
\let \la         =\lambda
\let \si         =\sigma
\let \up         =\upsilon
\let \om         =\omega
\let \phi         =\varphi
\title[Holomorphic functions of exponential type]
{Holomorphic functions of exponential type on connected complex
Lie groups} \subjclass[2010]{22E10, 22E30, 32A38, 46F05}
\keywords{Complex Lie group, Linear group, Holomorphic function of
exponential type, Arens-Michael envelope, Submultiplicative
weight, Length function, Exponential radical}
\author{O. Yu. Aristov}
\email{aristovoyu@inbox.ru}
\begin{document}
 \maketitle
\begin{abstract}
Holomorphic functions of exponential type on a complex Lie group
$G$ (introduced by Akbarov) form a locally convex algebra, which
is denoted by $\cO_{exp}(G)$. Our aim is to describe the structure
of $\cO_{exp}(G)$ in the case when $G$ is connected. The following
topics are auxiliary for the claimed purpose but of independent
interest: (1) a characterization of linear complex Lie group
(a~result  similar to that of Luminet and Valette for real Lie
groups); (2) properties of the exponential radical when $G$ is
linear; (3) an asymptotic decomposition of a word length function
into a sum of three summands (again for linear groups). The main
result presents $\cO_{exp}(G)$ as a complete projective tensor of
three factors, corresponding to the length function decomposition.
As an application, it is shown that if $G$ is linear then the
Arens-Michael envelope of $\cO_{exp}(G)$ is the algebra of all
holomorphic functions.
\end{abstract}

 \section{Introduction}

A holomorphic function on a complex Lie group $G$ is said to be of
exponential type if it is majorized by a submultiplicative weight
(a non-negative locally bounded function $\om$ satisfying
$\om(gh)\le \om(g)\om(h)$ for all $g, h \in G$). Akbarov in
\cite{Ak08} introduced this notion for a compactly generated Stein
group.  In fact, the definition can be used as well for a general
complex Lie group~$G$. The set $\cO_{exp}(G)$ of all holomorphic
functions of exponential type is a $\Ptens$-algebra (i.e., a
complete Hausdorff locally convex topological algebra with jointly
continuous multiplication) w.r.t. the point-wise multiplication
(see Lemma~\ref{Oexpptnal}). Considering basic examples, Akbarov
showed that $\cO_{exp}(\CC^m)$ coincides with the classical space
of entire functions  of exponential type, i.e., having  at most
order~$1$ and finite type, on $\CC^m$.  (The terminology is taken
from this example.) On the other hand, he proved that
$\cO_{exp}(\GL_m(\CC))$ is $\cR(\GL_m(\CC))$, the algebra of
regular functions in the sense of algebraic geometry. The main
objective of this text is to give a explicit description of
$\cO_{exp}(G)$ for an arbitrary connected complex Lie group~$G$.

Our interest is motivated by investigation of holomorphic
reflexivity for some topological Hopf algebras initiated in
[ibid.]. The essential question in this direction whether or not
the natural map from $\cO_{exp}(G)$ to $\cO(G)$, the algebra of
all holomorphic functions on~$G$, is an Arens-Michael envelope, in
other words, whether or not $\cO(G)$ is topologically isomorphic
to the completion of $\cO_{exp}(G)$ w.r.t. the topology determined
by all possible continuous submultiplicative prenorms. It is
claimed in [ibid., Lem.~6.6] that this is true if $G$ is affine
algebraic and connected. Unfortunately, the argument contains a
gap. (In the proof, two maps, $\rho$ and $\wt\rho$, are considered
but it is not clear why $\wt\rho$ extends $\rho$.) This defect can be
fixed if one shows that $\cR(G)\to \cO_{exp}(G)$ has dense range.
This is possible but the only way to prove density that I know is
by rude force, i.e., applying the main structural result of this
article (Theorem~\ref{fexpdec}); cf. also Theorem~\ref{expclin}.

To distinguish functions of exponential type inside $\cO(G)$ we
need to understand asymptotic behavior of a word length function.
The reason is that any submultiplicative weight has the form
$g\mapsto e^{\ell(g)}$, where $\ell$ is a length function, and any
length function is dominated at infinity  by a word length
function. We will be concerned with growth rate of any (eq.,
every) word length function in Section~\ref{ABWLF}.

Two normal closed subgroups of $G$, the linearizer and the
exponential radical, appear  naturally in the exposition. To
appreciate why they are important consider the following examples.

If $H$ is the $3$-dimensional complex Heisenberg group, which can
be presented as
\begin{equation}\label{3Hei}
\begin{pmatrix}
 1& a& b\\
 0&1 & c\\
 0 & 0&1
\end{pmatrix}\qquad (a,b,c\in\CC)\,,
\end{equation}
then, as not hard to see,   a word length function is equivalent
to $|a|+|c|+|b|^{1/2}$; in particular, all polynomials in $a$,
$b$, and $c$ are of exponential type. Similar asymptotic behavior
for an arbitrary simply connected nilpotent group $G$ is found in
\cite{VSC92} and \cite{Kar94}. In \cite{ArAMN}, which can be
considered as the first part of this text, these results are used
to determine the structure of $\cO_{exp}(G)$ for a general simply
connected nilpotent complex Lie group $G$.

On the other hand, consider the quotient of the Heisenberg group
over the discrete central subgroup given by
$$
N\!:=\begin{pmatrix}
1& 0& n\\
 0&1 & 0\\
 0 & 0&1
\end{pmatrix}\qquad (n\in\Z)\,.
$$
It is identified  with $\CC^\times\times\CC^2$ endowing with the
group law
$$
 (z,a,c)\cdot (z',a',c')\!:= (zz'e^{ca'},a+a',   c+c')\,.
$$
It is easy to see that the coordinate functions $a$ and $c$ are of
exponential type. Nevertheless $z$ is not of exponential type.
Indeed, take the  length function $\ell$ associated with the
symmetric neighbourhood of the identity $U\!:=\{1/2\le|z|\le 2,
\,|a|,\,|c|\le 1\}$. If $z$ is dominated by $e^\ell$, then there
are constants $C$ and $D\ge  0$ s.t. $|z(g)| \le e^{C\ell(g) + D}$
for all $g\in H/N$. Set $h=(1,0,1)$ and $g=(1,1,0)$. Then
$h^ng^n=(e^{n^2},n,n)$; hence $z(h^ng^n)= e^{n^2}$. Since $h$ and
$g$ are in $U$,  we get $\ell(h^ng^n)\le 2n$. Hence $e^{n^2}\le
e^{2Cn + D}$ for all $n\in\N$,  a contradiction.

Moreover, it can be shown that $\cO_{\exp}(H/N)$ is isomorphic to
$\cO_{\exp}(\CC^2)$, so the dimension degenerates. Crucial observation
to generalize this argument is that $H/N$ is not linear as a
complex Lie group. (Actually $H/N$ is a standard example of a
non-linear complex Lie group.) In fact, if $G$ is connected, then
we have an isomorphism $\cO_{exp}(G/\LinC(G))\cong\cO_{exp}(G)$,
where $\LinC(G)$ is the linearizer of $G$ (the intersection of
kernels of all finite-dimensional holomorphic representations);
see Theorem~\ref{redliexpf}. For the proof, we need an auxiliary
result: \emph{If holomorphic homomorphisms of a connected complex
Lie group $G$ to invertibles of Banach algebras separate points,
then $G$ is linear} (see Theorem~\ref{holhomline}).

Further, consider another example: the  simply connected
$2$-dimensional non-abelian Lie group $S$, i.e.,  $\CC^2$ with the
multiplication
\begin{equation}\label{2dimmul}
(s, t)\cdot (s', t')\!:=(s + s', te^{s'} + t')\,.
\end{equation}
It is not hard to see  that any word length function on $S$ is
equivalent to $|s|+\log|t|$, where $(s,t)\in S$. This
decomposition corresponds to the presentation
$$
\cO_{exp}(S)\cong
\cO_{exp}(\CC)\otimes \cR(\CC)\,.
$$

To transfer this observation to the general case we need notion of
exponential radical. The idea of exponential radical dates back to
Guivarc'h \cite{Gu80}. It was rediscovered and named by Osin in
\cite{Os02}, where the simply connected solvable case is
considered. In \cite{Co08},  Cornulier modified Osin's definition
in a way more convenient to general connected Lie groups.  The
main property of the exponential radical is that it is a strictly
exponentially distorted subgroup, which means logarithmic growth
for the restriction of a word length function (e.g., the
exponential radical of the group $S$ defined in~\eqref{2dimmul} is
$\{(0,t)\!:\,t\in\CC\}$).

For a complex Lie group, the exponential radical is easier to
describe than in the real case and we consider it carefully in
Section~\ref{ERLCG}.

\subsection*{Acknowledgements}
The author is grateful to  K.-H.~ Neeb for valuable
comments.

 \section{Characterization of the linearizer}\label{CL}

Our main references to the structure theory of Lie groups are
\cite{Ho65,Le02,HiNe}.  Our terminology and notation in this area
are principally from \cite{HiNe}. For a complex Lie group~$G$, the
intersection of kernels of all finite-dimensional holomorphic
representations is called the \emph{linearizer} and is denoted by
$\LinC(G)$.  Also, $G$~is  \emph{linear} if it admits a faithful
finite-dimensional holomorphic representation (eq.,
$\LinC(G)=\{1\}$). Further, $G$~is  \emph{linearly complex
reductive} if there exists a compact real Lie group~$K$ s.t.~$G$
is the universal complexification of~$K$ \cite[Def.~15.2.7]{HiNe}
(it is called 'reductive' in \cite{Le02}; cf. [ibid. Th. 4.31]).
An \emph{integral subgroup}~$H$ of~$G$ is a subgroup that is
generated by $\exp \fh$ for a subalgebra~$\fh$ of the Lie algebra
of~$G$; in this case we write $H=\langle\exp \fh\rangle$. If $H$
is a  closed subgroup and $G$ is connected, then $H^*$ denotes the
smallest complex integral subgroup containing~$H$ \cite[Defs.~
9.4.10, 15.2.11]{HiNe}.  We write $(G,G)$ for the subgroup
generated by the commutators $ghg^{-1}h^{-1}$ for $g,h\in G$. The
connected component of~$1$ and the center of~$G$ are denoted by
$G_0$ and $Z(G)$, resp.

It is known that a connected real Lie group with Levi-Malcev
decomposition $G=RS$, where $R$ is the solvable radical and $S$ is
a semisimple Levi factor, is linear iff  $R$ and $S$  are both
linear (as real Lie groups) \cite[Th. XVIII.4.2]{Ho65}. Since a
connected semisimple complex Lie group is always linear \cite[Th.
XVII.3.2]{Ho65}, the following proposition is an analogue of this
theorem in the complex case. The author was unable to find a
proof of this result in the literature.

\begin{thm}\label{linrad}
A connected complex Lie group is linear iff its radical is linear.
\end{thm}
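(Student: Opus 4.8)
The plan is to prove both directions via the Levi--Malcev decomposition $G = RS$, where $R$ is the solvable radical and $S$ a semisimple Levi factor, both connected complex integral subgroups. The "only if" direction is immediate: a subgroup of a linear group is linear, and one checks that a faithful finite-dimensional holomorphic representation of $G$ restricts to one of $R$. So the content is in the "if" direction: assuming $R$ is linear, produce a faithful finite-dimensional holomorphic representation of $G$.

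First I would record two structural facts. Since $G$ is connected and complex, a connected semisimple complex Lie group is linear by \cite[Th.~XVII.3.2]{Ho65}; hence $S$ admits a faithful finite-dimensional holomorphic representation $\sigma$. Second, $\LinC(G)$ is a closed connected normal subgroup contained in the radical: indeed, any finite-dimensional holomorphic representation of $G$ sends $S$, being semisimple, into its image's semisimple part, and the obstruction to injectivity is concentrated in $R$; more precisely $\LinC(G) \subseteq R$ because $G/R \cong S/(R\cap S)$ is linear, so $\LinC(G)$ maps to $\{1\}$ in $G/R$. Thus it suffices to separate $\LinC(G)$-points using representations of $G$, and since $R$ is linear, $\LinC(G)\cap R = \LinC(G)$ must already be trivial once we show every finite-dimensional holomorphic representation of $R$ extends (up to replacing it) to one of $G$ — this is the crux.

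The key step is therefore: \emph{given a faithful holomorphic representation $\rho$ of $R$, build a finite-dimensional holomorphic representation of $G$ whose restriction to $R$ is still faithful.} The natural tool is the induced representation / the adjoint action: $G$ acts on $R$ by conjugation, hence on the Lie algebra $\fr$ of $R$, and more generally on the universal enveloping algebra and on spaces of the form $\Hom(\fr^{\otimes k}, \fr)$. One strategy is to take the representation $\rho$ of $R$ together with the adjoint action $\Ad\colon G \to \GL(\fg)$, form a suitable finite-dimensional $G$-submodule $V$ of the space of functions on $R$ generated by the matrix coefficients of $\rho$ under the $G$-action (this submodule is finite-dimensional because $G/R$ is semisimple, so the $G$-orbit of a finite-dimensional $R$-representation spans a finite-dimensional space — this uses complete reducibility on the $S$-side together with the fact that the relevant cocycle data is algebraic), and then check that the resulting $G$-representation on $V \oplus \fg$ (or $V$ alone) is faithful on $R$. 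An alternative, cleaner route: use that $R$ linear forces $R$ to have a faithful representation $\rho$, consider the semidirect-type structure and apply a Lie-algebra level argument — construct a faithful finite-dimensional representation of $\fg = \fr \rtimes \fs$ from a faithful one of $\fr$ and the given one of $\fs$, using that $\fs$ acts on $\fr$ and that $H^1(\fs, -)=0$ to split obstructions — then integrate, noting $G$ connected and the representation of $\fg$ lifts to $G$ provided it is trivial on the relevant discrete central subgroup, which it is since $\LinC(G)\subseteq R$ is killed.

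The main obstacle I anticipate is precisely the passage from the Lie-algebra representation back to the group: a faithful representation of $\fg$ need not integrate to a \emph{faithful} representation of $G$ (it integrates to a representation of the universal cover), and controlling the kernel — showing the discrete central part one must quotient by is exactly $\LinC(G)$ and is contained in the already-linear $R$ — requires the characterization of $\LinC(G)$ as lying in $R$ together with a dimension/closedness argument. Handling the non-algebraicity of a general complex Lie group (so that Chevalley-type hull arguments don't directly apply) is the subtlety that makes this a genuine complex-analytic statement rather than a formal corollary of the real case.
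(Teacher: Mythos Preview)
Your outline has a genuine gap at precisely the step you flag as ``the crux'': extending a faithful finite-dimensional holomorphic representation $\rho$ of $R$ to a representation of $G$ that remains faithful on $R$. None of the three mechanisms you propose actually delivers this. The induced representation $\mathrm{Ind}_R^G \rho$ is infinite-dimensional since $G/R$ has positive dimension. The claim that the $G$-translates of the matrix coefficients of $\rho$ span a finite-dimensional space of holomorphic functions on $R$ is unsupported --- this works in the algebraic category by local finiteness of the regular representation, but there is no such mechanism for holomorphic functions. And the Lie-algebra route runs into exactly the obstacle you name: a faithful representation of $\fg$ integrates only to the universal cover $\tilde G$, and you must show it is trivial on $\ker(\tilde G \to G)$. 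Your remark that this kernel ``is contained in the already-linear $R$'' does not help --- lying in (the preimage of) $R$ does not force the representation to be trivial there, and invoking $\LinC(G)\subseteq R$ at this point is circular, since the triviality of $\LinC(G)$ is what you are trying to establish.

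The paper takes a completely different route and never constructs a representation directly. It uses the structural criterion \cite[Th.~16.3.7]{HiNe}: a connected complex Lie group is linear iff it decomposes as $B \rtimes L$ with $B$ simply connected solvable and $L$ linearly complex reductive. Starting from such a decomposition $R = B \rtimes L$ of the radical, the proof replaces $B$ by a normal-in-$G$ simply connected solvable subgroup $B_1$ (adapting Hochschild's real argument \cite[Th.~XVIII.4.2]{Ho65}, with complexified tori in place of real tori), and then shows $G = B_1 \rtimes (LS)$ with $LS$ linearly complex reductive. The substantive work is twofold: proving $LS$ is linearly complex reductive, which goes through the Matsushima--Morimoto theorem (linearity of $R$ forces $Z(G)_0$ to be Stein, hence $G$ is Stein, hence $Z(LS)_0$ is a complexified torus); and proving $B_1 \cap LS = \{1\}$, which uses that the center of the simply connected nilpotent-type group $B_1$ has no torsion. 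A direct extension-of-representations argument as you sketch would amount to reproving this structure theorem by other means, and your proposal does not get there.
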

\begin{proof}
The necessity is evident. To prove the sufficiency take a
connected complex Lie group $G$ and consider a Levi-Malcev
decomposition $G=RS$, where $R$ is the solvable radical and $S$ is
a semisimple Levi subgroup. Suppose that $R$ is linear. Then
$R=B\rtimes L$, where $B$  is simply connected solvable and $L$ is
linearly complex reductive \cite[Th.~16.3.7]{HiNe}.

Our first goal is to replace $B$ by a normal subgroup of $G$ with
the same properties. It is not hard to show that we can assume
that $(S,L)=\{1\}$ and there is a normal integral subgroup  $B_1$
in $G$ s.t. $R=B_1L$, $(R,R)\subset B_1$, and $B_1\cap L$ is
discrete.  (The argument is almost the same as for the real case
in \cite[Th.~XVIII.4.2, paragraphs 1--3 of the proof]{Ho65}. The
only step which is different is that we have to apply the fact
that a holomorphic finite-dimensional representation of a
complexified torus is completely reducible
\cite[Th.~15.2.10]{HiNe} instead of that a finite-dimensional
representation of a real torus is completely reducible.)

We claim that $R=B_1\rtimes L$ and $B_1$ is simply connected
closed (cf.~ the second part of the proof for
\cite[Th.~XVIII.4.2]{Ho65}). Indeed, $R$ is a linear complex Lie
group and $L=K^*$ for some  maximal compact subgroup $K$  in $R$.
Hence $L\cap (R,R)=\{1\}$ \cite[Th.~16.3.7]{HiNe} and $(R,R)$ is
closed in $R$ \cite[Pr.~4.37]{Le02}. (Remark that the latter is
true for all linear real Lie groups \cite[Cor.~16.2.8]{HiNe}.)
Also, $(R,R)$ is normal in~$R$, so is $L(R,R)$. Since $R$ is
solvable, $L$ is abelian and contains a maximal torus of $R$.
Further, $L(R,R)$ is integral and contains a maximal torus, so, by
\cite[Cor.~14.5.6]{HiNe}, we get that $L(R,R)$ is closed in $R$.
Remind that $(R,R)\subset B_1$, so we can consider a homomorphism
of Lie groups $\si\!:B_1/(R,R)\to R/(L(R,R))$. Since $R=B_1L$, we
obtain that $\si$ is surjective.

On the other hand, $R=B\rtimes L$ and $L$ is abelian; hence$(R,R)$
is contained in $B$  and  $R/(L(R,R))$ is isomorphic to $B/(R,R)$.
Moreover, $(R,R)$ is a connected subgroup in the simply connected
group $B$; therefore  $(R,R)$ and $R/(L(R,R))$ are simply
connected. Further, $$\Ker \si=B_1/(R,R)\cap (L(R,R))/(R,R)$$ is
discrete because $B_1\cap L$ is discrete. Whence $\Ker \si$ is
trivial since $R/(L(R,R))$  is simply connected. Thus $\si$ is an
isomorphism. Hence  $B_1/(R,R)$ is simply connected; so is $B_1$.
Besides, it follows from
$$B_1/(R,R)\cap(L(R,R))/(R,R)=\{1\}$$ that $B_1\cap
L(R,R)\subset(R,R)$. Consequently $B_1\cap L\subset
L\cap(R,R)=\{1\}$; in particular, $B_1$ is closed.  The claim is
proved.

Since $(S,L)=\{1\}$, the set $LS$ is an integral subgroup in $G$.
By \cite[Th.~16.3.7]{HiNe}, to complete the proof we need to show
that $LS$ is linearly complex reductive and $G=B_1\rtimes LS$.
First, we claim that $G$ is a Stein group. Indeed, since $R$ is
linear, it is a Stein group. On the other hand,  $Z(G)_0$ is a
closed subgroup of $R$, so $Z(G)_0$ is a Stein group. It follows
from the Matsushima-Morimoto theorem \cite[Th.~XIII.5.9]{Nee} that
a connected complex Lie group  is a Stein group iff the connected
component of $1$ in the center is a Stein group. The claim is
proved.

Further, $LS$ is an integral subgroup in a Stein group, therefore
it is  holomorphically separable, hence, by the
Matsushima-Morimoto theorem, it is a Stein group.  Since $L$ is
abelian, we get $Z(LS)=LZ(S)$. The center of a connected
semisimple complex Lie group is finite \cite[Cor.~4.17]{Le02}, so
$Z(LS)_0/L$ is finite. Then $Z(LS)_0$ is toroidal, i.e.,
$Z(LS)_0=T^*$ for some maximal torus~$T$ \cite[Pr.~15.3.9]{HiNe}.
Besides,  $Z(LS)_0$ is a Stein group because it is a closed
subgroup of a Stein group. It follows from \cite[Pr.~15.3.4]{HiNe}
that a toroidal Stein group is a complexified  torus. Since $LS$
is connected,  application of \cite[Th.~15.2.9]{HiNe} yields that
$LS$ is linearly complex reductive.

By dimension argument, $B_1\cap LS$ is discrete; hence it is
central. Let $g\in B_1\cap LS$. Then $g\in LZ(S)$ and $Z(S)$ is
finite, whence there is $k\in\N$ s.t. $g^k\in L$. Therefore
$g^k\in B_1\cap L=\{1\}$. The center of $B_1$ is simply connected,
so the only element of finite order is $1$. Thus $B_1\cap
LS=\{1\}$ and we have finally that $G=B_1\rtimes LS$.
\end{proof}

It is a standard fact that a Banach space valued function that is
 weakly holomorphic is also holomorphic w.r.t. the
norm; see, e.g. \cite[Th.~2.1.3]{He89}.  So we can say freely that
a homomorphism $\pi\!:G\to \GL(A)$, where $G$ is a complex Lie
group and $\GL(A)$ is the group of invertible elements of a unital
Banach algebra $A$, is \emph{holomorphic} if for any continuous
linear functional $x$ on $A$ the function $G\to\CC\!:g\mapsto
\langle x, \pi(g)\rangle$ is holomorphic.

\begin{thm}\label{holhomline}
The linearizer $\LinC(G)$ of a connected complex Lie group $G$
coincides  with
$$
\bigcap_\pi \{\Ker \pi\!: \,\pi\!: G\to \GL(A)\}\,,
$$
where $A$ runs all unital Banach algebras and $\pi$ runs all
possible holomorphic homomorphisms.
\end{thm}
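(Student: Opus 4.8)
The plan is to show two inclusions. Write $N$ for the intersection $\bigcap_\pi \Ker\pi$ over all holomorphic homomorphisms $\pi\colon G\to\GL(A)$ into invertibles of unital Banach algebras. One inclusion, $N\subseteq\LinC(G)$, is immediate: every finite-dimensional holomorphic representation of $G$ is in particular a holomorphic homomorphism into $\GL_n(\CC)=\GL(M_n(\CC))$, so the intersection defining $N$ is taken over a larger family than the one defining $\LinC(G)$, giving $N\subseteq\LinC(G)$. The content is the reverse inclusion $\LinC(G)\subseteq N$, equivalently: if $g\notin\LinC(G)$ is separated from $1$ by some Banach-algebra-valued holomorphic homomorphism, then it is already separated by a finite-dimensional holomorphic representation. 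Equivalently still, it suffices to prove the contrapositive in the form of the auxiliary statement announced in the introduction: \emph{if holomorphic homomorphisms of $G$ into invertibles of Banach algebras separate points, then $G$ is linear}; applying this to $G/\LinC(G)$ (whose Banach-algebra-valued holomorphic homomorphisms separate points precisely because $N$-elements do, and which is still a connected complex Lie group) yields $\LinC(G/\LinC(G))=\{1\}$, hence $\LinC(G)\subseteq N$.

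So the core task is: a connected complex Lie group $G$ whose Banach-algebra-valued holomorphic homomorphisms separate points is linear. By Theorem~\ref{linrad} it is enough to show that the solvable radical $R$ of $G$ is linear, since $G/\LinC(G)$ being linear reduces (via the structure theory) to linearity of the radical. I would argue that the hypothesis descends to $R$: restricting a separating family of homomorphisms $G\to\GL(A)$ to the closed subgroup $R$ still separates points of $R$. Then the problem is reduced to the solvable connected case. For a connected solvable complex Lie group, linearity is governed by the toral/unipotent structure: by \cite[Th.~16.3.7]{HiNe} (or the Stein-group criteria used in the proof of Theorem~\ref{linrad}) $R$ fails to be linear essentially when it contains a central $\CC^\times$-type quotient obstruction of the Heisenberg-mod-lattice kind, i.e.\ a one-parameter subgroup $\exp(\CC z)$ inside the derived group meeting a maximal torus in a lattice. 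The key point is that such an obstruction cannot be resolved by \emph{any} Banach algebra: a holomorphic homomorphism $\CC\to\GL(A)$, $t\mapsto e^{tx}$, descends to the quotient $\CC/\Z$ only if $e^x=1$, and the non-linearity of $H/N$ in the introduction is exactly an instance where the relevant generators $h,g$ satisfy a commutator relation forcing, for any Banach-algebra image, $e^{[\log\pi(h),\log\pi(g)]}$ to act trivially, so that the offending element $z$ maps to $1$. I would formalize this: reduce to showing that the image of $\LinC(G)$ under any holomorphic homomorphism into $\GL(A)$ is trivial, i.e.\ $\LinC(G)\supseteq\Ker$(every such map isn't needed—only that $\LinC(G)$ itself maps to $1$), and this follows because $\LinC(G)$ is generated by commutators and central elements that, by the exponential/unipotent analysis, are forced into the kernel of any Banach-algebra representation just as they are forced into the kernel of any finite-dimensional one.

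The cleanest route, and the one I would pursue, is therefore: (i) show $\LinC(G)$ maps to $\{1\}$ under every holomorphic homomorphism $G\to\GL(A)$ directly, which gives $\LinC(G)\subseteq N$ without passing through Theorem~\ref{linrad} at all; this reduces to analyzing holomorphic homomorphisms of the relevant solvable/toral building blocks, where the functional calculus $t\mapsto e^{tx}$ in a Banach algebra behaves, for the purpose of factoring through quotients by lattices, exactly like the matrix exponential; (ii) combine with the trivial inclusion $N\subseteq\LinC(G)$. The main obstacle I anticipate is step~(i) in full generality—one needs to control holomorphic homomorphisms of an arbitrary connected complex Lie group into Banach invertibles well enough to see that they cannot do better than finite-dimensional representations at detecting $\LinC(G)$; the likely mechanism is that the ``missing'' obstruction in non-linear connected complex Lie groups is always of a shape (a $\CC^\times$ sitting as a quotient of a $\CC$ in a derived subgroup, killed by lattice relations coming from commutators) that is invisible to \emph{any} target with a holomorphic exponential map and reasonable functional calculus, Banach or not. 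A secondary technical point is verifying that ``holomorphic'' in the weak (functional-wise) sense used here is strong enough to make the local exponential/Lie-algebra correspondence available, but the remark preceding the theorem (weak holomorphy implies norm holomorphy for Banach-valued maps) is exactly what licenses this.
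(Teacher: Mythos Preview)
Your high-level strategy matches the paper's: the inclusion $N\subseteq\LinC(G)$ is trivial, and for the other you reduce to showing that a connected complex Lie group whose Banach-algebra-valued holomorphic homomorphisms separate points is linear, then invoke Theorem~\ref{linrad} to reduce to the solvable case. That reduction is exactly what the paper does.

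The gap is in the solvable case itself. Your description of the obstruction as ``a $\CC^\times$ sitting as a quotient of a $\CC$ in a derived subgroup, killed by lattice relations'' is heuristically in the right neighbourhood, but it is not a proof, and the claim that such obstructions are ``invisible to any target with a holomorphic exponential map'' is precisely the hard part. The paper does not argue by analysing specific lattice quotients. Instead it uses the structural criterion of \cite[Th.~16.3.7]{HiNe}: a connected solvable complex Lie group is linear iff $K^*\cap(G,G)=\{1\}$ for a maximal compact subgroup $K$. Two concrete lemmas then do the work. First (Lemma~\ref{sol1pr}), for any holomorphic $\pi\colon G\to\GL(A)$ with $G$ solvable and any $g\in(G,G)$, one has $\pi(g)=1+r$ with $r$ topologically nilpotent; this rests on a result of Turovskii that the image of $[\fg,\fg]$ under the Lie-algebra map lands in the Jacobson radical of the closed subalgebra it generates. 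Second (Lemma~\ref{cLgrfg}), the restriction of $\pi$ to the abelian compact $K$ extends to $\mathscr{E}(K)'$, whose image in $A$ has closure a commutative classically semisimple algebra $C$; this uses Peter--Weyl. Since $\pi(K^*)\subset\GL(C)$ by the universal-complexification property, any $g\in K^*\cap(G,G)$ has $\pi(g)=1+r\in C$ with $r$ topologically nilpotent in a commutative semisimple algebra, forcing $r=0$.

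Your ``cleanest route'' of showing directly that $\LinC(G)$ maps to $1$ under every Banach representation is equivalent to the goal, but you have not supplied an argument for it beyond the same heuristic. Without something playing the role of the radical/nilpotency lemma and the compact-group semisimplicity lemma, the proposal does not close.
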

\begin{rem}
Luminet and Vallete proved a result that gives a characterization
of the real linearizer for a real Lie group as the mutual kernel
of all norm continuous homomorphisms to  the invertible groups of
unital Banach algebras \cite[Th.~A,
(i)$\Leftrightarrow$(v)]{LuVa}. (Also, this is true for more
general class of continuous inverse algebras \cite{BN08}.) Since
any holomorphic homomorphism from a complex Lie group is
holomorphic w.r.t. the norm,  it is norm continuous. Thus we can
consider Theorem~\ref{holhomline}, in which the norm continuity
assumption is redundant,  as an analogue of the result of Luminet
and Vallete.
\end{rem}

For the proof, we need two lemmas and the notation $\Rad A$ for
the Jacobson radical of a Banach algebra~$A$. Recall that $a\in A$
is called \emph{topologically nilpotent} if $\|a^n\|^{1/n}\to 0$.

\begin{lm}\label{sol1pr}
Let $G$ be a connected solvable complex Lie group, $A$ a unital
Banach algebra, and $\pi\!:G\to \GL(A)$  a holomorphic
homomorphism. Then for each $g\in (G,G)$ there is a topologically
nilpotent $r\in A$
s.t. $\pi(g)=1+r$.
\end{lm}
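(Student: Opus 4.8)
The plan is to exploit the solvability of $G$ to find, inside the Banach algebra $A$, a closed ideal that absorbs the image of the commutator subgroup. First I would reduce to the case where $\pi$ is \emph{irreducible} in a suitable sense: if $\pi$ is not topologically irreducible, one passes to a composition series of closed invariant subspaces (using that $A$, viewed as a left module over itself twisted by $\pi$, is a Banach module) and argues componentwise; but more robustly, one works directly with the closed subalgebra $B$ of $A$ generated by $1$ and $\pi(G)$, so that $\pi(G)$ acts irreducibly enough that $B/\Rad B$ is semisimple. The key classical input is the Lie–Kolchin type theorem for Banach algebras: a connected solvable complex Lie group acting holomorphically by invertibles on a Banach space has the property that the induced representation is, modulo the radical, simultaneously "triangularizable," so that commutators land in $1 + \Rad B$.

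More precisely, the main steps I would carry out are: \emph{(1)} Form $B = \overline{\mathrm{alg}}(1,\pi(G))\subseteq A$ and reduce mod its Jacobson radical to get a semisimple Banach algebra $\bar B = B/\Rad B$ with induced holomorphic homomorphism $\bar\pi\colon G\to\GL(\bar B)$. \emph{(2)} Show $\bar\pi((G,G)) = \{1\}$. For this, decompose $\bar B$ using the structure of semisimple Banach algebras — or, to keep things elementary, note that it suffices to check that every character and, more generally, every finite-dimensional irreducible representation of $\bar B$ kills $\bar\pi((G,G))$; composing with $\bar\pi$ gives a finite-dimensional holomorphic representation $\rho$ of the connected solvable complex Lie group $G$, and by the complex Lie–Kolchin theorem $\rho(G)$ can be put in upper-triangular form, so $\rho((G,G))$ lies in the unipotent radical, i.e. $\rho(g) - 1$ is nilpotent for $g\in(G,G)$. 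Since these finite-dimensional representations separate points of a semisimple Banach algebra only up to the issue of "enough" of them, one instead argues that the spectrum of $\bar\pi(g)-\bar 1$ is $\{0\}$ for $g\in(G,G)$, hence $\bar\pi(g)-\bar 1$ is quasinilpotent; combined with the fact that it generates a commutative-enough situation, conclude $\bar\pi(g) = \bar 1$. \emph{(3)} Lift back: $\bar\pi(g)=\bar 1$ means $\pi(g) = 1 + r$ with $r\in\Rad B\subseteq A$, and elements of the Jacobson radical of a Banach algebra are topologically nilpotent (their spectral radius is $0$), which is exactly the claim with this $r$.

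The main obstacle will be step (2): passing from the finite-dimensional Lie–Kolchin statement to the statement that $\bar\pi(g)-\bar 1$ is quasinilpotent (spectrum $\{0\}$) in the possibly infinite-dimensional semisimple Banach algebra $\bar B$. The clean route is to invoke a Banach-algebraic triangularization / Lie's theorem for Banach algebra representations of solvable Lie groups — essentially that a holomorphic representation of a connected solvable complex Lie group on a Banach space has a "weight" functional on the derived subgroup forcing $\sigma(\pi(g)) = \{\chi(g)\}$ for $g$ ranging over $(G,G)$ with $\chi$ trivial on $(G,G)$ — but since I am not assuming such a theorem is available, I would instead reduce to the finite-dimensional case through the primitive ideals of $\bar B$: for each primitive ideal $P$, $\bar B/P$ is a primitive Banach algebra, and one shows via Lie's theorem applied to the (possibly infinite-dimensional, but irreducible) action that commutators act as $1$; the spectral radius of $\bar\pi(g)-\bar 1$ is the sup over primitive ideals of the spectral radius in $\bar B/P$, which is $0$, and semisimplicity ($\bigcap P = 0$) then gives $\bar\pi(g) = \bar 1$ directly. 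Handling the irreducible infinite-dimensional case of Lie's theorem — showing a connected solvable complex Lie group acting holomorphically and irreducibly on a Banach space sends commutators to $1$ (using, e.g., that the image of $(G,G)$ consists of elements with singleton spectrum together with an invariant-subspace or weight argument) — is where the real work lies, and I expect the proof to spend most of its effort there.
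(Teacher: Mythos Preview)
Your overall strategy --- pass to the closed unital subalgebra generated by the image, show that commutators land in its Jacobson radical, and use that radical elements are topologically nilpotent --- is exactly the paper's strategy, and step~(3) is identical to the paper's final step. The difference is entirely in how step~(2) is carried out.

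The paper does not stay at the group level. It differentiates $\pi$ to a Banach Lie algebra homomorphism $\mathrm{L}_\pi\colon\fg\to A$ (using that $\GL(A)$ is a Banach Lie group with Lie algebra~$A$ and exponential $a\mapsto\sum a^n/n!$), takes $A_0$ to be the closed unital associative subalgebra generated by the solvable Lie subalgebra $\mathrm{L}_\pi\fg$, and then invokes a ready-made theorem of Turovski\u{\i} \cite{Tu84} (see also \cite[Th.~24.1]{BS01}): for a solvable Lie subalgebra of a Banach algebra, the derived subalgebra lies in the Jacobson radical of the closed subalgebra it generates. This gives $\mathrm{L}_\pi[\fg,\fg]\subset\Rad A_0$ in one line; exponentiating and using that $(G,G)=\langle\exp[\fg,\fg]\rangle$ for connected~$G$ finishes the proof.

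What you are attempting in step~(2) is essentially to reprove Turovski\u{\i}'s theorem at the group level via primitive ideals. The obstacle you flag --- an infinite-dimensional Lie's theorem for holomorphic irreducible Banach representations of solvable groups --- is real, and is precisely the content that \cite{Tu84} packages. Your intermediate suggestion, to deduce $\bar\pi(g)=\bar1$ in the semisimple quotient $\bar B$ from quasinilpotency of $\bar\pi(g)-\bar1$ plus a ``commutative-enough situation'', does not work as stated: semisimple Banach algebras (e.g.\ $\mathcal B(H)$) have plenty of nonzero quasinilpotents, and nothing in your setup forces $\bar B$ to be commutative. The primitive-ideal route is the honest one, but then you are back to the hard irreducible case. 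So your plan is correct in outline but incomplete at the crucial point; the short fix is to differentiate and cite \cite{Tu84}.
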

\begin{proof}
Consider $\GL(A)$ as a (complex) Banach Lie group \cite[Exm.
III.1.11(b)]{Ne05} and $\pi$ as a Banach Lie group homomorphism.
Identifying the Lie algebra of $\GL(A)$ with $A$  we can write the
exponential map as
$$
\exp\!:A\to \GL(A)\!:a\mapsto \sum_{n=0}^\infty \frac{a^n}{n!}
$$
[ibid., Rem. IV.2.2]. Denote by $\fg$ the Lie algebra of $G$. Then
applying the Lie functor to $\pi$ we get a Banach Lie algebra
homomorphism $\mathrm{L}_\pi\!:\fg\to A$ s.t. $\pi\exp=\exp
\mathrm{L}_\pi$.

It follows from results of \cite{Tu84} (see also
\cite[Th.~24.1]{BS01}) that $\mathrm{L}_\pi[\fg,\fg]\subset\Rad
A_0$, where $A_0$ is the closed associative unital subalgebra of
$A$ generated by the solvable Lie subalgebra $\mathrm{L}_\pi \fg$
of $A$.  For any $\xi$ in  $[\fg,\fg]$  we have
$$
\exp \mathrm{L}_\pi(\xi)=\sum_{n=0}^\infty
\frac{\mathrm{L}_\pi(\xi)^n}{n!}\,.
$$
So $\exp \mathrm{L}_\pi(\xi)=1+r$ for some $r\in\Rad A_0$ (because
$\Rad A_0$ is closed). Since $G$ is connected, the subgroup
$(G,G)$ is generated by $\exp [\fg,\fg]$ \cite[Pr.~11.2.3]{HiNe}.
Therefore $\pi(g)$ has the same form for any $g\in (G,G)$.
Finally, note that each element  of the Jacobson radical of a
Banach algebra is topologically nilpotent \cite[Th.~2.1.33]{X2}.
\end{proof}

Recall that  a Banach algebra is said to be \emph{classically
semisimple} if it isomorphic to a finite product of full matrix
algebras over $\CC$. We denote by $\mathscr{E}(K)'$ the algebra of
distribution on a compact Lie group $K$.

\begin{lm}\label{cLgrfg}
Let $K$ be a compact  Lie group and let $A$ be a unital Banach
algebra. If $\phi\!:\mathscr{E}(K)'\to A$ is a continuous
homomorphism with dense range, then~$A$ is classically
semisimple.
\end{lm}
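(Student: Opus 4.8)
The plan is to use the structure theory of compact Lie groups together with the fact that $\mathscr{E}(K)'$, the convolution algebra of distributions on $K$, decomposes according to the Peter--Weyl theorem. Recall that $\mathscr{E}(K)'$ contains the group algebra and, as a topological algebra, is isomorphic to a certain completed direct sum $\prod_{\pi\in\wh K} M_{d_\pi}(\CC)$ over the irreducible unitary representations (the precise statement: $\mathscr{E}(K)'\cong \wh{\bigoplus}_{\pi} \End(V_\pi)$ with the product topology, since every distribution has an expansion into matrix coefficients with coefficients of polynomial growth). The key point is that each matrix block $M_{d_\pi}(\CC)$ is a closed two-sided ideal, hence a unital subalgebra with its own central idempotent $e_\pi$, and these idempotents are mutually orthogonal and sum (in the appropriate sense) to the identity of $\mathscr{E}(K)'$.

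First I would show that the image $\phi(e_\pi)$ is a central idempotent in $A$ for every $\pi$, and that for all but finitely many $\pi$ this image is zero. For the latter, suppose infinitely many $\phi(e_\pi)$ were nonzero; since the $e_\pi$ are orthogonal, so are the $\phi(e_\pi)$, and one can produce an element in the closure of the range — using that $\phi$ has dense range and is continuous — whose spectrum is infinite in a way incompatible with membership in a Banach algebra, or more directly, one shows the $\phi(e_\pi)$ generate an infinite-dimensional closed commutative subalgebra that cannot be the whole of a finitely-generated-as-closed-algebra situation. Actually the cleaner route: the closed ideal $J=\overline{\phi(\mathscr{E}(K)')}=A$, and $A$ as a unital Banach algebra has $1=\phi(\text{something})$ approximated by finite sums; since $1$ lies in the closed span of the $\phi(e_\pi)$ and these are orthogonal idempotents, only finitely many are nonzero (an infinite orthogonal family of idempotents in a unital Banach algebra cannot have its partial sums converge to an invertible element). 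So $1=\sum_{\pi\in F}\phi(e_\pi)$ for a finite set $F$, and $\phi(e_\pi)=0$ for $\pi\notin F$.

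Next, for $\pi\in F$ the restriction $\phi|_{M_{d_\pi}(\CC)}\colon M_{d_\pi}(\CC)\to \phi(e_\pi)A\phi(e_\pi)$ is a unital homomorphism from a simple algebra; since $M_{d_\pi}(\CC)$ is simple its kernel is zero, so $\phi(e_\pi)A\phi(e_\pi)$ contains a copy of $M_{d_\pi}(\CC)$ with the same unit $\phi(e_\pi)$. Then $A=\bigoplus_{\pi\in F}\phi(e_\pi)A\phi(e_\pi)$ (a finite product of Banach algebras, since the $\phi(e_\pi)$ are orthogonal central idempotents summing to $1$), and each factor is the closure of the image of $M_{d_\pi}(\CC)$ under $\phi$, hence equals $M_{d_\pi}(\CC)$ because that image is already a complete (finite-dimensional) subalgebra and is dense. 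Therefore $A\cong \prod_{\pi\in F} M_{d_\pi}(\CC)$ is classically semisimple.

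The main obstacle I expect is the argument that only finitely many $\phi(e_\pi)$ survive: one must be careful that density of the range plus continuity genuinely forces the unit of $A$ into the closed linear span of the images of the central idempotents, and then invoke the Banach-algebra fact that an orthogonal family of nonzero idempotents whose closed span contains an invertible element must be finite (equivalently, $A$ would otherwise contain a unital copy of $c_0$ or $\ell^\infty$, contradicting that $A/\Rad A$ embeds its idempotent lattice into something of bounded "rank" — more simply, the elements $p_n := \phi(e_{\pi_1})+\cdots+\phi(e_{\pi_n})$ form a sequence of idempotents with $\|p_n-p_m\|\ge$ const for $n\ne m$ if the underlying idempotents are nonzero, so they cannot converge, yet a finite sub-sum already equals a limit point of $\phi$ of partial sums of $\sum e_\pi$ which converges to $\phi(1)=1$; contradiction). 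Once that finiteness is secured the rest is essentially formal.
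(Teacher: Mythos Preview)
Your approach via the Peter--Weyl decomposition and the central idempotents $e_\pi$ is the same circle of ideas as the paper's, and once the finiteness step is secured the rest goes through as you say. The soft spot is precisely that step. Your argument that only finitely many $\phi(e_\pi)$ are nonzero ultimately rests on the claim that $\sum_{\pi\in F} e_\pi \to \delta_e$ in $\mathscr{E}(K)'$, so that the images converge to $\phi(\delta_e)=1$ in $A$; this is true, but you have not justified it, and your alternative formulation ``$1$ lies in the closed span of the $\phi(e_\pi)$'' does not follow from density of the range alone --- the closed linear span of the images of the central idempotents is in general much smaller than $\overline{\phi(T)}$. Incidentally, $\mathscr{E}(K)'$ is not the full product $\prod_\pi M_{d_\pi}(\CC)$ with the product topology but the subspace of polynomial-growth sequences with an inductive-limit topology; this is exactly what matters when checking the convergence above.

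The paper bypasses this convergence question with a short trick worth knowing. Since the space $T$ of matrix coefficients is dense in $\mathscr{E}(K)'$ (Peter--Weyl, via $L^2(K)$), the image $\phi(T)$ is dense in $A$, so one can pick $t\in T$ with $\phi(t)$ close enough to $1$ to be invertible. Any $t\in T$ lies in finitely many blocks, hence $tp=t$ for a finite central idempotent $p=\sum_{\pi\in F}e_\pi$; then $1=\phi(t)^{-1}\phi(t)\phi(p)=\phi(p)$. Thus $\phi(Tp)$ is dense in $A$ and finite-dimensional, so $A=\phi(Tp)$ is a quotient of the classically semisimple algebra $Tp$. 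No infinite sums of idempotents are needed.
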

\begin{proof}
The linear space  $T$ of matrix coefficients of finite-dimensional
representations of $K$ is a (non-unital) subalgebra of
$\mathscr{E}(K)'$.  It follows  from  the Peter-Weyl Theorem that
$T$ is dense in $L^2(K)$.  But $L^2(K)$  is dense in
$\mathscr{E}(K)'$; therefore, $\phi(T)$ is dense in $A$. So we can
take $t\in T$ s.t. $\phi(t)$ is sufficiently close to $1$ to be
invertible.

It is well known (e.g.   \cite[Th.~27.21]{HeRo} ) that each element
of $T$ is contained in a finite sum of complemented  minimal
two-sided ideals and each such ideal is a full matrix algebra.
Therefore there exists a central idempotent $p\in T$ s.t. $tp=t$.
Then $1=\phi(t)^{-1}\phi(t)=\phi(t)^{-1}\phi(t)\phi(p)=\phi(p)$.
So $\phi(Tp)$  is dense in~$A$. Since $Tp$ is finite-dimensional,
this image is closed; hence $\phi(Tp)=A$ . Thus $A$~is a quotient
of the classically semisimple algebra $Tp$, so~$A$ is classically
semisimple itself.
\end{proof}

\begin{proof}[Proof of Theorem~\ref{holhomline}]
Since $\bigcap_\pi \Ker \pi\subset \LinC(G)$, it suffices to
consider the case where  $\bigcap_\pi \Ker \pi=\{1\}$ and prove
that $G$ is linear. By Theorem~\ref{linrad}, we can assume also
that $G$ is solvable. If follows from
\cite[Th.~16.3.7(v)$\Rightarrow$(iii)]{HiNe} that we need to check
that $K^*$ is linear and $K^*\cap (G,G)=\{1\}$ for any maximal
compact subgroup~$K$ of~$G$ (where $K^*$ is the smallest complex
integral subgroup containing~$K$ \cite[Def.~15.2.11]{HiNe}).

First, we claim that $G$ is a Stein group. Indeed, $\bigcap_\pi
\Ker \pi=\{1\}$ implies that coefficients of holomorphic
homomorphisms to Banach algebras, which are holomorphic functions,
separate points of $G$. Thus $G$ is holomorphically separable;
hence it is a Stein group \cite[Th.~XIII.5.9]{Nee}.

From \cite[Lem.~14.3.3]{HiNe} it follows that a maximal compact
subgroup $K$ of a solvable Lie group is abelian. Then $K^*$ is an
abelian integral subgroup. Whence $K^*$ contains a maximal torus;
so it is closed \cite[Cor.~4.5.6]{HiNe}.  Being a closed
submanifold of a Stein manifold, $K^*$ is a Stein manifold
\cite[Th.~XIII.5.2]{Nee}. Therefore, $K^*$ has no compact factors.
Hence, by \cite[Pr.~15.3.4(i)]{HiNe},  $K^*$ is a universal
complexification of $K$.

Now, let $A$ be a unital Banach algebra and $\pi\!:G\to \GL(A)$ a
holomorphic homomorphisms.  The restriction of $\pi$ to $K$ is
infinitely differentiable, hence it can be extended to a
continuous homomorphism $\phi\!:\mathscr{E}(K)'\to A$. Denote by
$C$ the closure of  $\phi(\mathscr{E}(K)')$ in $A$; then
$\pi(K)\subset \GL(C)$.  Lemma~\ref{cLgrfg} implies that $C$ is
classically semisimple Banach algebra. It is evident that $C$ is
commutative; therefore it is a finite sum of copies of $\CC$. So
$\GL(C)$ is a finite-dimensional Lie group. By the definition of
universal complexification, the homomorphism $K\to\GL(C)$ is
extended to a holomorphic homomorphism $K^*\to\GL(C)$, which
coincides with the restriction of $\pi$ by the uniqueness
property.

Let $g\in K^*\cap (G,G)$. By Lemma~\ref{sol1pr}, there exists a
topologically nilpotent $r\in A$ such that $\pi(g)=1+r$. Since
$\pi(g)\in C$, we have $r\in C$. But $C$ is semisimple and
commutative; so the only topologically nilpotent element in $C$
is~$0$ \cite[Pr.~2.1.34]{X2}. Therefore $\pi(g)=1$. Since $\pi$ is
arbitrary, it follows from the assumption that $g=1$.
\end{proof}

 \section{Exponential radical of a linear complex group}\label{ERLCG}

Recall that a (real) Lie group $G$ is of polynomial growth iff its
Lie group $\fg$ is of \emph{Type~R} \cite[6.25, 6.39]{Pat}, i.e.,
the eigenvalues of $\ad \xi$ are contained in $i\R$ for all
$\xi\in\fg$. The following theorem describes the exponential
radical of simply connected solvable Lie groups; it is a
combination of results from \cite{Gu80} and \cite{Os02}.  (See the
definition of a strictly exponentially distorted subgroup in
\eqref{sedustde} below.)
\begin{thm}\label{GuOs}
Let $G$ be a simply connected solvable Lie group and let $\fg$~be
the Lie  algebra of~$G$.

\emph{(A)} Then there exist a closed normal subgroup~$E$ s.t.
$G/E$ is the largest  polynomial growth quotient of~$G$  and a Lie
ideal~$\fe$ in~$\fg$ of~$G$  s.t. $\fg/\fe$ is the largest Type~R
quotient of~$\fg$. Moreover, $E=\langle \exp \fe\rangle$.

\emph{(B)} The subgroup~$E$ is nilpotent and stable under
automorphisms of~$G$; the ideal~$\fe$ is nilpotent and stable
under automorphisms of~$\fg$.

\emph{(C)} The subgroup $E$ is strictly exponentially distorted
in~$G$.
\end{thm}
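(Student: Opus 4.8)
The plan is to prove Theorem~\ref{GuOs} by reducing everything to known results of Guivarc'h and Osin and then verifying that the various characterizations of the ``exponential radical'' coincide. First I would recall Guivarc'h's theorem \cite{Gu80}: a connected Lie group $G$ has polynomial growth iff its Lie algebra $\fg$ is of Type~R, and the set of $\xi\in\fg$ for which $\ad\xi$ has all eigenvalues in $i\R$ need not be a subalgebra in general, so one cannot simply take $\fe$ to be that set. Instead, the correct definition (following Osin \cite{Os02}) is that $\fe$ is the intersection of all ideals $\fi$ such that $\fg/\fi$ is of Type~R; equivalently, since a quotient of a Type~R algebra is Type~R and since Type~R is stable under extensions within the solvable category only up to the right conditions, one checks that the class of ideals with Type~R quotient is closed under finite intersection (if $\fg/\fi_1$ and $\fg/\fi_2$ are Type~R then $\fg/(\fi_1\cap\fi_2)$ embeds into the product and hence is Type~R). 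This gives a smallest such ideal $\fe$, and $\fg/\fe$ is then the largest Type~R quotient. Setting $E:=\langle\exp\fe\rangle$ and, since $G$ is simply connected and $\fe$ is an ideal, $E$ is a closed connected normal subgroup with Lie algebra $\fe$ and $G/E$ simply connected with Lie algebra $\fg/\fe$; that $G/E$ is the largest polynomial growth quotient then follows from Guivarc'h's theorem applied in both directions (a quotient $G/N$ has polynomial growth iff $\fg/\mathrm{Lie}(N)$ is Type~R iff $\mathrm{Lie}(N)\supseteq\fe$ iff $N\supseteq E$, using simple connectedness to pass between connected normal subgroups and ideals). This proves part~(A).

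For part~(B), I would argue that $\fe$, being defined intrinsically as an intersection of ideals characterized by a property of $\fg$ alone, is automatically carried to itself by any automorphism $\phi$ of $\fg$: $\phi$ permutes the ideals with Type~R quotient, hence fixes their intersection. Passing to the simply connected group, any automorphism of $G$ differentiates to an automorphism of $\fg$, preserves $\fe$, and hence preserves $E=\langle\exp\fe\rangle$. Nilpotency of $\fe$ is the substantive point: one shows that a solvable Lie algebra in which every proper Type~R quotient has been ``divided out'' has nilpotent kernel. The cleanest route is the one in \cite{Os02}: writing $\fg=\fn\rtimes\fa$ (or using the nilradical $\fn$ of $\fg$), the Type~R condition is equivalent to $\ad\xi|_{\fn}$ having purely imaginary eigenvalues for all $\xi$, and $\fe$ turns out to be contained in $\fn$ and to be, in fact, the ``exponentially distorted part'' $\sum_{\al}\fn_\al$ spanned by the root spaces for roots $\al$ with $\Re\al\neq 0$ — a sum of nilpotent ideals, hence nilpotent. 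I would cite \cite[the relevant proposition]{Os02} here rather than redo the root-space bookkeeping; the nilpotency of $E$ then follows since $E$ is simply connected with nilpotent Lie algebra.

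Part~(C) is where the main work lies, and I expect the decay estimate to be the principal obstacle. The definition to verify is that there are constants $C,D>0$ with
\begin{equation}\label{sedustde}
\ell_E(x)\le C\log\bigl(1+\ell_G(x)\bigr)+D\qquad\text{for all }x\in E,
\end{equation}
where $\ell_G$ is a word length function on $G$ (with respect to a compact generating set) and $\ell_E$ one on $E$; the reverse inequality $\ell_G\le\ell_E$ up to constants is trivial since the inclusion is Lipschitz. For this I would follow Osin's argument: using the root-space description, an element of $E$ of the form $\exp(\sum_\al v_\al)$ can be written, via conjugation by $\exp(t\xi)$ for a suitable $\xi\in\fa$ with $\Re\al(\xi)>0$, as a product of a bounded element conjugated by $\exp(t\xi)$ — and $\exp(t\xi)$ has word length $O(|t|)$ in $G$ — so that an element of $\ell_E$-length roughly $e^{t}$ has $\ell_G$-length roughly $t$. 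Making this uniform over all roots and controlling cross-terms from the Baker–Campbell–Hausdorff formula (which is where the nilpotency of $\fe$ from part~(B) is used, so the BCH series terminates) gives \eqref{sedustde}. I would present this as an appeal to \cite[Thm.~1.2 and its proof]{Os02}, indicating that Osin's statement is exactly (A)–(C) for simply connected solvable real Lie groups, and noting only the one place where one must check that our normalization of ``exponential radical'' (via the largest Type~R quotient) matches his (via polynomial growth), which is immediate from Guivarc'h. Thus the theorem is a repackaging, and the proof consists in assembling \cite{Gu80} and \cite{Os02} and checking the dictionary between the group-level and algebra-level statements.
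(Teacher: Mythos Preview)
Your approach is essentially the paper's approach: the paper does not prove Theorem~\ref{GuOs} at all but states it as ``a combination of results from \cite{Gu80} and \cite{Os02}'' and moves on. Your sketch of how those two sources assemble---Guivarc'h's equivalence of polynomial growth with Type~R, Osin's construction of $\fe$ as the minimal ideal with Type~R quotient, and Osin's distortion estimate---is correct and is more than the paper itself provides.

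One slip to fix: your displayed inequality for strict exponential distortion is written backwards. You wrote $\ell_E(x)\le C\log(1+\ell_G(x))+D$, but combined with the trivial Lipschitz bound $\ell_G\lesssim\ell_E$ this would force both length functions to be bounded. The correct inequality (and the one the paper records as~\eqref{sedustde}) is
\[
\ell_G(x)\;\simeq\;\log\bigl(1+\ell_E(x)\bigr)\qquad(x\in E),
\]
i.e.\ the \emph{ambient} length is logarithmic in the \emph{intrinsic} length. Your own prose a few lines later (``an element of $\ell_E$-length roughly $e^{t}$ has $\ell_G$-length roughly $t$'') has it the right way round, so this is a transcription error rather than a conceptual one; just swap $\ell_E$ and $\ell_G$ in the display. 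Also avoid reusing the label \texttt{sedustde}, since the paper already owns it.
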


Following \cite[Def.~6.2]{Co08}, we say that the \emph{exponential
radical} of a connected (real or complex) Lie group $G$ is  the
closed normal subgroup $E$ s.t. $G/E$ is the largest quotient of
$G$ that is a \emph{P-decomposed} Lie group, i.e., locally
isomorphic to a direct product of a group of polynomial growth and
a semisimple group.

The following several paragraphs contain omitted in  [ibid.]
details concerning existence of the exponential radical and
decomposition its Lie algebra in the general connected case.
First, we give a definition on the Lie algebra level.
\begin{df}
We say that a (real or complex) Lie algebra is
\emph{R-decomposed} if it is a direct sum of a semisimple algebra
and an  algebra of Type~R.
\end{df}
Obviously, $G$ is P-decomposed iff its Lie algebra $\fg$ is
R-decomposed. Note that both properties have alternative
descriptions: a connected Lie group is P-decomposed iff it has
the Rapid Decay Property \cite{CPS07} and a unimodular Lie
algebra  is R-decomposed iff it is a B-algebra in the sense of
Varopoulos \cite[Sect.~1.8]{Va96}.

The proof of the following lemma is straightforward.
\begin{lm}\label{Rdecl}
A  Lie algebra  is R-decomposed  iff any maximal semisimple
subalgebra without compact summands is complemented and each
complement is of Type~R.
\end{lm}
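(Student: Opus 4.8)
The plan is to prove both implications directly from the definitions, using only elementary structure theory of semisimple Lie algebras together with Levi's theorem. Recall that a Lie algebra is of Type~R if all eigenvalues of $\ad\xi$ lie in $i\R$; in particular every ideal and every quotient of a Type~R algebra is again of Type~R, and a semisimple algebra is of Type~R precisely when it is compact. So ``$R$-decomposed'' means $\fg = \fs \oplus \fr$ (direct sum of ideals) with $\fs$ semisimple and $\fr$ of Type~R; here $\fs$ contains all compact simple ideals harmlessly, but the content is about the noncompact part, which is why the statement is phrased in terms of maximal semisimple subalgebras \emph{without compact summands}.

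For the forward direction, suppose $\fg = \fs \oplus \fr$ as above, and let $\fs_1$ be a maximal semisimple subalgebra without compact summands. Decompose $\fs = \fs^c \oplus \fs^{nc}$ into its compact and noncompact parts (a direct sum of ideals), and note $\fs^c \subseteq \fr'$ where $\fr' := \fs^c \oplus \fr$ is of Type~R (a semisimple compact algebra is of Type~R, and the sum is direct as ideals). Then $\fg = \fs^{nc} \oplus \fr'$ with $\fs^{nc}$ a maximal semisimple subalgebra without compact summands. Any two such maximal subalgebras are conjugate under an inner automorphism (this is the standard conjugacy of ``noncompact Levi factors'', or can be reduced to ordinary Levi conjugacy applied inside $\fg/\mathrm{rad}\,\fg$ after absorbing compact ideals), so after applying an automorphism I may assume $\fs_1 = \fs^{nc}$. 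Then $\fr'$ is a complement to $\fs_1$; and for \emph{any} complement $\fc$ we argue $\fc \cong \fg/\fs_1 \cong \fr'$ as Lie algebras, hence $\fc$ is of Type~R. The isomorphism $\fc\cong\fg/\fs_1$ holds because $\fc$ is a subalgebra mapping bijectively onto the quotient.

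For the converse, assume every maximal semisimple subalgebra without compact summands is complemented with every complement of Type~R. Apply Levi's theorem: $\fg = \fs_0 \ltimes \mathrm{rad}\,\fg$ with $\fs_0$ a Levi subalgebra; split $\fs_0 = \fs_0^c \oplus \fs_0^{nc}$ and take $\fs_1 := \fs_0^{nc}$, which is a maximal semisimple subalgebra without compact summands. By hypothesis there is an ideal complement $\fc$ with $\fg = \fs_1 \oplus \fc$ (direct sum of ideals) and $\fc$ of Type~R. This is already almost the $R$-decomposition, \emph{except} I must check $\fs_1$ is actually an ideal, not merely a subalgebra complementary to $\fc$ — but if $\fc$ is an ideal and $\fg/\fc\cong\fs_1$ is semisimple, then the projection $\fg\to\fs_1$ with kernel $\fc$ exhibits $\fs_1$ as an ideal too (the semisimple quotient splits off canonically, or: $[\fg,\fs_1]\subseteq\fs_1$ because modulo $\fc$ it is $[\fs_1,\fs_1]=\fs_1$ and the $\fc$-component of $[\fg,\fs_1]$ must vanish since $\fc$ is an ideal and... here one uses that $\fs_1$ is perfect). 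Then $\fg = \fs_1 \oplus \fc$ with $\fs_1$ semisimple and $\fc$ Type~R, so $\fg$ is $R$-decomposed.

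The main obstacle is the conjugacy statement used in the forward direction: one needs that any two maximal semisimple subalgebras of $\fg$ without compact summands are carried into one another by an automorphism (inner suffices). This is a refinement of Malcev's conjugacy theorem for Levi factors; the clean way is to observe that compact simple ideals of a Levi factor are automatically ideals of $\fg$ (a compact semisimple ideal of the Levi part is central-izer-stable and its image in $\mathrm{Der}(\mathrm{rad}\,\fg)$ is zero by compactness-meets-solvability), push them into the radical direction, and then invoke ordinary Levi conjugacy. I expect the referee-level subtlety to be precisely this bookkeeping of compact ideals; everything else (heredity of Type~R under quotients, the subalgebra-complement-is-isomorphic-to-quotient observation, Levi's theorem) is routine, which is presumably why the authors call the proof ``straightforward.''
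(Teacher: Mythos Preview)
The paper declares the proof ``straightforward'' and omits it, so there is no explicit argument to compare against. Your forward direction is correct but more elaborate than necessary: the conjugacy of maximal noncompact semisimple subalgebras, which you flag as ``the main obstacle,'' is in fact vacuous here. Once $\fg = \fs^{nc} \oplus \fr'$ with $\fr'$ of Type~R, project an arbitrary maximal noncompact semisimple $\fs_1$ onto $\fr'$ along the ideal $\fs^{nc}$. The image is a quotient of $\fs_1$ (hence semisimple without compact summands, or zero) and a subalgebra of $\fr'$ (hence of Type~R, since for $\xi$ in a subalgebra $\ft\subseteq\fr'$ the eigenvalues of $\ad_\ft\xi=(\ad_{\fr'}\xi)|_\ft$ lie among those of $\ad_{\fr'}\xi$). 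A semisimple Type~R algebra is compact, so the image is zero; thus $\fs_1\subseteq\fs^{nc}$, and maximality gives $\fs_1=\fs^{nc}$. There is a \emph{unique} such subalgebra, and no conjugacy theorem is needed.

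Your converse, however, contains a genuine error. You claim that if $\fc$ is an ideal with $\fg=\fs_1\oplus\fc$ as vector spaces and $\fg/\fc\cong\fs_1$ is semisimple, then $\fs_1$ must itself be an ideal. This is false: in $\fg=\mathfrak{sl}_2(\R)\ltimes\R^2$ with the standard action, $\fc=\R^2$ is an ideal, $\fg/\fc\cong\mathfrak{sl}_2(\R)$ is semisimple and perfect, yet $\mathfrak{sl}_2(\R)$ is not an ideal in $\fg$. The hand-wave ``here one uses that $\fs_1$ is perfect'' does not lead anywhere. The fix is interpretive: ``complemented'' in the lemma should be read as \emph{direct summand as an ideal}, so that both $\fs_1$ and its complement are ideals by hypothesis. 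This is precisely the reading the paper relies on when it applies the lemma in the proof of Lemma~\ref{gelaqal} to deduce $[\fs_{nc},\fr]\subset\fj$. Under that reading your worry disappears and the converse becomes a one-liner.
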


Suppose temporarily that $\fg$ is a real Lie algebra. Let $\fr$ be
the solvable radical of~$\fg$ and  $\fs$  a Levi complement. Write
$$
\fs=\fs_c\oplus\fs_{nc},
$$
where $\fs_c$ is compact and $\fs_{nc}$  is maximal semisimple
without compact summands. Denote by $\fe_r$  the ideal in $\fr$
s.t. $\fr/\fe_r$ is the largest quotient of $\fr$  that is
R-decomposed and set
\begin{equation}\label{defe}
\fe\!:=\fe_r+[\fs_{nc},\fr]\,.
\end{equation}

\begin{lm}\label{eisnilp}
The subspace $\fe$ is a nilpotent ideal in $\fg$.
\end{lm}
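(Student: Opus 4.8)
The plan is to show separately that $\fe$ is an ideal and that it is nilpotent, using the explicit formula $\fe=\fe_r+[\fs_{nc},\fr]$ together with the structure of the Levi--Malcev decomposition $\fg=\fr\rtimes\fs$ with $\fs=\fs_c\oplus\fs_{nc}$.

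First I would check that $\fe$ is an ideal. Since $\fe_r$ is the kernel of the projection of $\fr$ onto its largest R-decomposed quotient, it is a characteristic ideal of $\fr$, hence stable under every derivation of $\fr$ that comes from the adjoint action of $\fg$; in particular $[\fs,\fe_r]\subset\fe_r$ and $[\fr,\fe_r]\subset\fe_r$. For the term $[\fs_{nc},\fr]$: it is clearly contained in $\fr$ (as $\fr$ is an ideal), and I would use the Jacobi identity together with $[\fs_{nc},\fs_{nc}]=\fs_{nc}$ and $[\fs_c,\fs_{nc}]=0$ (the compact and noncompact semisimple parts commute) to see that $[\fg,[\fs_{nc},\fr]]\subset[\fs_{nc},\fr]+[\fr,\fr]$. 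The point then is that $[\fr,\fr]\subset\fe$: indeed, a quotient of $\fr$ by a subspace containing neither $\fe_r$ nor enough of $[\fr,\fr]$ cannot be R-decomposed in the required way — more precisely, the largest R-decomposed quotient of the solvable algebra $\fr$ is of Type~R, and a solvable algebra of Type~R has its derived algebra acting nilpotently, so $\fe_r\supset$ (the relevant part of) $[\fr,\fr]$; combined with $[\fs_{nc},[\fs_{nc},\fr]]\subset[\fs_{nc},\fr]$ this gives closure under $\ad\fg$.

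Next, nilpotency. Here I would argue that $\fe$ is a nilpotent ideal by exhibiting it inside the nilradical, or by showing directly that $\ad\xi$ is nilpotent for $\xi\in\fe$. The subspace $\fe_r$ is nilpotent by Theorem~\ref{GuOs}(B) applied to the solvable radical $\fr$ (or rather to its simply connected cover), since $\fe_r$ is exactly the exponential-radical Lie ideal of $\fr$. The term $[\fs_{nc},\fr]$ consists of elements on which $\fs_{nc}$ acts; because $\fs_{nc}$ is semisimple, $\fr$ decomposes as an $\fs_{nc}$-module into the trivial isotypic component plus a complement, and $[\fs_{nc},\fr]$ lies in that complement, on which the Casimir-type averaging shows the adjoint action is "rotation-like" — more usefully, one shows $[\fs_{nc},\fr]$ is contained in the nilradical of $\fg$ because it is an ad-nilpotent ideal: any ideal of a Lie algebra that is contained in the solvable radical and on which a Levi factor acts without trivial summands is nilpotent (this is the standard fact that $[\fs,\fr]$ is nilpotent). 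Then $\fe$ is the sum of two nilpotent ideals, hence nilpotent.

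The main obstacle I expect is the interplay between the two summands — specifically verifying that $\fe_r+[\fs_{nc},\fr]$ is genuinely closed under $\ad\fs_c$ and $\ad\fr$, and pinning down the precise sense in which $[\fr,\fr]$ (or its image) is absorbed into $\fe_r$. This requires being careful about what "largest R-decomposed quotient" forces on $\fe_r$: one must extract that the solvable quotient $\fr/\fe_r$ being of Type~R means every element of $[\fr,\fr]$ that is not already "rotational modulo $\fe_r$" must lie in $\fe_r$, and then reconcile this with the contributions of $[\fs_{nc},\fr]$ under the bracket. Once that bookkeeping is done, both the ideal property and nilpotency should follow from standard semisimple-module decompositions and Theorem~\ref{GuOs}(B).
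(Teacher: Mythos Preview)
Your overall architecture matches the paper's: treat $\fe_r$ and $[\fs_{nc},\fr]$ separately, show each is an ideal in $\fg$, and then place each inside a nilpotent ideal so that their sum is nilpotent.

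The argument that $\fe_r$ is an ideal in $\fg$ (it is stable under automorphisms of $\fr$ by Theorem~\ref{GuOs}(B), hence under every derivation of $\fr$, hence under $\ad\fg$) is exactly the paper's.

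The gap is precisely where you anticipated it. Your Jacobi calculation gives $[\fr,[\fs_{nc},\fr]]\subset[\fs_{nc},\fr]+[\fr,\fr]$, and you then try to force $[\fr,\fr]\subset\fe$. That inclusion fails in general. Take $\fr$ to be the $3$-dimensional real Heisenberg algebra with centre $\R z$, and let $\fs=\fs_{nc}=\mathfrak{sl}_2(\R)$ act via the standard $2$-dimensional representation on $\fr/\R z$ and trivially on $\R z$ (this is a genuine action by derivations). Since $\fr$ is nilpotent it is already of Type~R, so $\fe_r=0$; meanwhile $[\fs_{nc},\fr]$ is the $2$-dimensional non-central part of $\fr$, which does \emph{not} contain $[\fr,\fr]=\R z$. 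Your heuristic that ``Type~R forces enough of $[\fr,\fr]$ into $\fe_r$'' breaks exactly here: nilpotent algebras are of Type~R yet can have nontrivial derived subalgebra. The paper does not attempt your route at all; it simply invokes \cite[Lem.~6.8]{Co08} for the assertion that $[\fs_{nc},\fr]$ is itself an ideal in $\fg$, and proceeds.

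For nilpotency the paper is slightly more direct than your plan. Rather than arguing that $\fe_r$ and $[\fs_{nc},\fr]$ are separately nilpotent ideals and then summing, it observes $\fe_r\subset\fr_\infty\subset[\fr,\fr]\subset[\fg,\fr]$ and $[\fs_{nc},\fr]\subset[\fg,\fr]$, and cites \cite[Cor.~5.4.15]{HiNe} for the standard fact that $[\fg,\fr]$ is a nilpotent ideal of $\fg$. Thus $\fe\subset[\fg,\fr]$, and any subalgebra of a nilpotent Lie algebra is nilpotent. Your ``sum of two nilpotent ideals is nilpotent'' would also work, but it presupposes that both summands are already ideals, which loops back to the unresolved step above.
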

\begin{proof}
First, note that $[\fs_{nc},\fr]$ is an ideal in $\fg$ (see the
Lie group form  in \cite[Lem.~6.8]{Co08}). Since $\fe_r$  is an
ideal in $\fr$ that is stable under  automorphisms of $\fr$
(Theorem~\ref{GuOs}(B)), it is stable under derivations, so
$\fe_r$ is an ideal in $\fg$. Therefore  $\fe$ is an ideal
in~$\fg$.

Let $\fr_\infty$ be the intersection of the lower central series
of $\fr$. Since  $\fr/\fr_\infty$ is nilpotent, it is of Type~R;
so $\fe_r\subset \fr_\infty$. Note that $\fr_\infty\subset
[\fr,\fr] $, which is nilpotent. On the other hand, by \cite[Cor.
5.4.15]{HiNe}, we have that $[\fg,\fr]$ is a nilpotent ideal of
$\fg$.  So  $\fe_r$ and $[\fs_{nc},\fr]$ are both contained in
nilpotent ideals; hence they are nilpotent themselves. Therefore
$\fe$ is nilpotent.
\end{proof}

\begin{lm}\label{gelaqal}
The Lie algebra $\fg/\fe$ is the largest quotient of $\fg$ that
is R-decomposed.
\end{lm}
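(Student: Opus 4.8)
The plan is to verify that $\fg/\fe$ is R-decomposed and that it is the largest such quotient, working separately on the ``semisimple part'' and the ``Type~R part'' and using Lemma~\ref{Rdecl} as the organizing criterion. First I would observe that $\fe$ is an ideal by Lemma~\ref{eisnilp}, so the quotient $\fg/\fe$ makes sense, and that passing to the quotient kills $[\fs_{nc},\fr]$, i.e.\ in $\fg/\fe$ the image of $\fs_{nc}$ commutes with the image of $\fr$. Since $\fg=\fr\rtimes(\fs_c\oplus\fs_{nc})$, the quotient decomposes as $(\fr/\fe_r')\rtimes(\fs_c\oplus\fs_{nc})$ where $\fe_r'=\fe\cap\fr$; the key point here is to check $\fe\cap\fr=\fe_r$ (one inclusion is clear; for the other, use that $[\fs_{nc},\fr]\subset\fe_r$ would need justification — actually it is cleaner to note $[\fs_{nc},\fr]$ maps to the Type~R radical of $\fr/\fe_r$, see below). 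Granting this, $\fg/\fe\cong(\fr/\fe_r)\rtimes(\fs_c\oplus\fs_{nc})$ with $\fs_{nc}$ acting trivially on $\fr/\fe_r$, so the action of the semisimple part preserves the decomposition and $\fs_{nc}$ splits off as a direct summand. By Theorem~\ref{GuOs}(A) applied to $\fr$ (via its simply connected group), $\fr/\fe_r$ is R-decomposed; since $\fr$ is solvable this just says $\fr/\fe_r$ is of Type~R, hence $(\fr/\fe_r)\rtimes\fs_c$ — an extension of a compact semisimple algebra by a Type~R algebra — is of Type~R as well (the eigenvalues of $\ad$ of any element lie in $i\R$ on each factor and the bracket between them is controlled). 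Therefore $\fg/\fe=\bigl((\fr/\fe_r)\oplus\fs_c\bigr)\oplus\fs_{nc}$ exhibits it as (Type~R)~$\oplus$~(semisimple), i.e.\ R-decomposed.

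For maximality, suppose $\fg/\fj$ is R-decomposed for some ideal $\fj$; I must show $\fe\subset\fj$. Apply Lemma~\ref{Rdecl}: the image of $\fs_{nc}$ in $\fg/\fj$ lies in a maximal semisimple subalgebra without compact summands, which is complemented with a Type~R complement; in particular the image of $\fs_{nc}$ acts trivially (in the adjoint sense) on the complement, and since $[\fs_{nc},\fs_c]=0$ and $[\fs_{nc},\fr]$ maps into the radical, one deduces $[\fs_{nc},\fr]\subset\fj$. Then $\fg/\fj$ has solvable radical $\fr/(\fj\cap\fr)$, which is a quotient of $\fr$ and is of Type~R (being the radical of an R-decomposed algebra); by the maximality clause in Theorem~\ref{GuOs}(A) for $\fr$, this forces $\fe_r\subset\fj\cap\fr\subset\fj$. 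Combining, $\fe=\fe_r+[\fs_{nc},\fr]\subset\fj$, as required.

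The real case being settled, the complex case follows by restriction of scalars: a complex Lie algebra is of Type~R, resp.\ semisimple, resp.\ R-decomposed, iff it is so as a real Lie algebra, and the formation of $\fr$, $\fs_c=0$, $\fs_{nc}=\fs$, $\fe_r$, and hence $\fe$ is compatible with this restriction (here $\fs_c$ vanishes since a complex semisimple Lie algebra has no compact real form as a complex algebra). So the statement transfers verbatim.

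The main obstacle I anticipate is the bookkeeping around $[\fs_{nc},\fr]$: showing cleanly that modulo $\fe$ (or modulo any $\fj$ with R-decomposed quotient) the semisimple-without-compact part really does decouple from the radical, and that this decoupling is both necessary and sufficient. This is where Lemma~\ref{Rdecl} does the real work, and where one must be careful that ``maximal semisimple without compact summands'' behaves well under passing to quotients — in particular that its image is again of that type and picks up no new compact summands, which uses that $\fs_c$ is already split off and acts semisimply. Everything else is a routine assembly of Theorem~\ref{GuOs} and the elementary closure properties of Type~R algebras under extensions by compact semisimple pieces.
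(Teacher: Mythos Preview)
Your overall strategy matches the paper's, but there is one concrete error. You write $\fg/\fe \cong (\fr/\fe_r')\rtimes(\fs_c\oplus\fs_{nc})$ with $\fe_r' = \fe\cap\fr$, and then flag as the ``key point'' the identity $\fe\cap\fr = \fe_r$. Since both summands $\fe_r$ and $[\fs_{nc},\fr]$ already lie in the ideal $\fr$, one has $\fe\subset\fr$ and hence $\fe\cap\fr = \fe$; so your claim amounts to $[\fs_{nc},\fr]\subset\fe_r$, and this fails in general. Take $\fg = \R^n\rtimes\fs\fl_n(\R)$ with $n\ge 3$: here $\fr=\R^n$ is abelian, so $\fe_r=0$, while $[\fs_{nc},\fr]=\R^n$, giving $\fe=\R^n\ne\fe_r$. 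You seem to sense the difficulty (``would need justification'') but then proceed with the false identification anyway. The repair is immediate and is exactly what the paper does: write $\fg/\fe\cong(\fr/\fe)\rtimes\fs_c\rtimes\fs_{nc}$, note that $\fr/\fe$ is a further quotient of $\fr/\fe_r$ and hence still of Type~R, and continue as before. For the step that $(\fr/\fe)\rtimes\fs_c$ is of Type~R the paper cites \cite[Pr.~6.28]{Pat} rather than arguing eigenvalues by hand; your heuristic is in the right direction but the reference is cleaner.

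Your maximality argument is correct and is a mild variant of the paper's. The paper first observes that the class of R-decomposed quotients is closed under finite intersections of kernels (by embedding $\fg/(\bigcap_k\fj_k)$ into $\bigoplus_k\fg/\fj_k$), uses this with the semisimple quotient $\fg/\fr$ to reduce to the case $\fj\subset\fr$, and then identifies $\fs_{nc}$ itself as the maximal semisimple-without-compact-summands subalgebra of $\fg/\fj$ before invoking Lemma~\ref{Rdecl}. You skip the reduction and instead extend the image of $\fs_{nc}$ to a maximal such subalgebra of $\fg/\fj$; since that maximal subalgebra is a direct Lie-algebra summand with Type~R complement, the image of $\fr$ lands in the complement and $[\fs_{nc},\fr]\subset\fj$ follows. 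Both routes reach the same conclusion.
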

\begin{proof}
First, we claim that  for a finite family of ideals
$\fj_1,\ldots,\fj_n$ s.t. each $\fg/\fj_k$ is R-decomposed, all
algebras $\fg/(\cap \fj_k)$ is also R-decomposed (cf. \cite{Gu80}
for Type~R algebras). Indeed, note that the property to be
R-decomposed is inherited by finite sums and subalgebras but
$\fg/(\cap \fj_k)$ is isomorphic to the range of $\fg \to
\bigoplus_k \fg/\fj_k$.

Suppose that  $\fj$ is an ideal in $\fg$ s.t. $\fg/ \fj$ is
R-decomposed. To show that $\fj\subset \fe$ note that $\fg/\fr$ is
R-decomposed, so $\fg/(\fr\cap \fj)$ is R-decomposed. So we can
assume that $\fj\subset \fr$.  Further, $\fr+\fs_c$ is a
semidirect sum and  an ideal in~$\fg$.  Moreover,
$\fg=(\fr\rtimes\fs_c)\rtimes \fs_{nc}$. Then $\fg/\fj$ is
isomorphic to $(\fr/ \fj\rtimes\fs_c)\rtimes \fs_{nc}$. Therefore
$\fs_{nc}$ is   maximal in $\fg/\fj$ as a semisimple subalgebra
without compact summands. Since $\fg/\fj$ is R-decomposed,
Lemma~\ref{Rdecl} implies that $\fs_{nc}$  is complemented and
$\fr/ \fj\rtimes\fs_c$ is of Type~R. In particular,
$[\fs_{nc},\fr]\subset \fj $. Note that $\fr/ \fj$ is the radical
of $\fr/ \fj\rtimes\fs_c$; so $\fr/ \fj$  is of Type~R.  By the
definition of $\fe_\fr$, we have $\fe_\fr\subset\fj$.  It follows
from \eqref{defe} that $\fe\subset\fj$.

On the other hand, since $\fe\subset\fr$, we obtain
$\fg/\fe\cong(\fr/ \fe\rtimes\fs_c)\rtimes \fs_{nc}$.  At the same
time, $[\fs_{nc},\fr]\subset \fe $; therefore the action of
$\fs_{nc}$ on $\fr/ \fe\rtimes\fs_c$ is trivial. Thus
$\fg/\fe\cong(\fr/ \fe\rtimes\fs_c)\oplus \fs_{nc}$.  Since $\fr/
\fe$ and $\fs_c$ are of Type~R, it follows from
\cite[Pr.~6.28]{Pat} that $\fr/ \fe\rtimes\fs_c$ is of Type~R.
\end{proof}

The following corollary follows easily from Lemma~\ref{gelaqal}.
\begin{co}\label{Liealde}
For any connected Lie group $G$, the exponential radical  exists
and coincides with the closure of the integral subgroup
$\langle\exp \fe\rangle$, where $\fe$ is defined by~\eqref{defe}
for the Lie algebra $\fg$ of $G$.
\end{co}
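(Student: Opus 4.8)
The plan is to establish existence and the formula simultaneously by proving that $N:=\overline{\langle\exp\fe\rangle}$ is the smallest closed normal subgroup of $G$ with a P-decomposed quotient, which is exactly the condition defining the exponential radical above. First note that, $\fe$ being an ideal of $\fg$ by Lemma~\ref{eisnilp} and $G$ being connected, $\Ad(g)$ fixes $\fe$ for every $g\in G$, so the integral subgroup $\langle\exp\fe\rangle$ is normal; hence $N$ is a closed normal subgroup and $G/N$ is a Lie group. I will use the observation recorded above that a connected Lie group is P-decomposed if and only if its Lie algebra is R-decomposed.

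The first step is to check that $G/N$ is P-decomposed. Let $\fn$ denote the Lie algebra of $N$. Since $N$ contains the integral subgroup $\langle\exp\fe\rangle$, whose Lie algebra is $\fe$, we have $\fe\subseteq\fn$, so $\fg/\fn$ is a quotient of $\fg/\fe$. By Lemma~\ref{gelaqal} the algebra $\fg/\fe$ is R-decomposed, and a quotient of an R-decomposed Lie algebra is again R-decomposed: writing such an algebra as $\fs\oplus\ft$ with $\fs$ semisimple and $\ft$ of Type~R, any ideal $\fj$ satisfies $\mathrm{pr}_\fs(\fj)=[\fs,\mathrm{pr}_\fs(\fj)]\subseteq\fj$ (the first equality because an ideal of a semisimple algebra equals its own bracket with the whole algebra, the inclusion because $[\fs,\ft]=0$), so $\fj=\mathrm{pr}_\fs(\fj)\oplus(\fj\cap\ft)$ and the quotient splits as a quotient of $\fs$ (semisimple) plus a quotient of $\ft$, the latter of Type~R since the eigenvalues of $\ad$ only shrink under passing to a quotient. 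Hence $\fg/\fn$ is R-decomposed and $G/N$ is P-decomposed.

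The second step is minimality: if $M$ is a closed normal subgroup of $G$ with $G/M$ P-decomposed, then $N\subseteq M$. Indeed $\fg/\fm$ is R-decomposed, where $\fm$ is the Lie algebra of $M$, so Lemma~\ref{gelaqal} yields $\fe\subseteq\fm$; therefore $\exp\fe\subseteq M$, whence $\langle\exp\fe\rangle\subseteq M$ and, $M$ being closed, $N=\overline{\langle\exp\fe\rangle}\subseteq M$. Together the two steps show that $N$ is the least closed normal subgroup with a P-decomposed quotient, so that $G/N$ is the largest P-decomposed quotient of $G$; this is the assertion that the exponential radical exists and equals $N$.

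All of this is short once Lemma~\ref{gelaqal} is in hand; the only delicate point is that one must work with the closure of $\langle\exp\fe\rangle$, which need not itself be closed, so the first step has to be insensitive to enlarging $\fe$ to $\fn=\mathrm{Lie}(N)$ --- which is precisely why one appeals to the fact that a quotient of an R-decomposed algebra is R-decomposed instead of attempting to prove $\mathrm{Lie}(N)=\fe$, an identity that would be both harder and unnecessary.
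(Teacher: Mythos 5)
Your argument is correct and follows the same route the paper intends: the paper simply states that the corollary ``follows easily from Lemma~\ref{gelaqal}'', and your proof is exactly that derivation, with the useful details filled in --- in particular the observation that $\mathrm{Lie}(\overline{\langle\exp\fe\rangle})$ may be larger than $\fe$, handled via the (correctly proved) fact that quotients of R-decomposed algebras are R-decomposed, and the minimality step via Lemma~\ref{gelaqal}.
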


Our next goal is to show that, for a complex Lie algebra $\fg$,
the decomposition~\eqref{defe} has the simplified  form
\begin{equation}\label{rinsr}
\fe=\fr_\infty+[\fs,\fr]\,,
\end{equation}
where $\fr_\infty$ denotes the intersection of the lower central
series of $\fr$. It is evident that in this case  $\fs_{nc}=\fs$
but the equality $\fe_\fr=\fr_\infty$ needs a little more work.

\begin{lm} \label{TRisN}
A complex Lie algebra is of  Type~R iff it is nilpotent.
\end{lm}
\begin{proof}
Suppose that $\fg$ is a complex Lie algebra is of  Type~R, i.e.,
the eigenvalues of $\ad \xi$ are contained in $i\R$ for all
$\xi\in\fg$. In particular, the eigenvalues of $\ad (i\xi)$ also are
contained in $i\R$. Therefore, each eigenvalue is $0$; so, by
Engel's Theorem, $\fg$ is nilpotent.

On the other hand, each nilpotent Lie algebra is of Type~R.
\end{proof}

\begin{co} \label{PGisN}
A connected complex Lie group is of polynomial growth iff it is nilpotent.
\end{co}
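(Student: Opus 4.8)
The plan is to deduce Corollary~\ref{PGisN} directly from Lemma~\ref{TRisN} together with the standard Guivarc'h--Jenkins characterization of polynomial growth already recalled at the start of Section~\ref{ERLCG}: a connected (real) Lie group $G$ has polynomial growth iff its Lie algebra $\fg$ is of Type~R (see \cite[6.25, 6.39]{Pat}). This holds in particular when $\fg$ is a complex Lie algebra, regarded as a real Lie algebra.

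First I would note that a connected complex Lie group $G$ with Lie algebra $\fg$ has polynomial growth, by the criterion just cited, if and only if $\fg$ is of Type~R. Then I would invoke Lemma~\ref{TRisN}, which asserts that a complex Lie algebra is of Type~R precisely when it is nilpotent. Hence $G$ has polynomial growth iff $\fg$ is nilpotent. Finally, a connected Lie group is nilpotent iff its Lie algebra is nilpotent (a standard fact, e.g. \cite[Cor.~11.2.7]{HiNe} or the discussion around the lower central series there), which closes the chain of equivalences.

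There is essentially no obstacle here: the only point that requires the tiniest care is making sure the Type~R criterion for polynomial growth is being applied to the \emph{underlying real} Lie group/algebra of the complex group, so that the eigenvalue condition on $\ad\xi$ for $\xi\in\fg$ is exactly the hypothesis of Lemma~\ref{TRisN}. Since the Type~R condition is phrased identically in both the real and complex settings (eigenvalues of $\ad\xi$ in $i\R$ for all $\xi$), no translation is needed. The statement is therefore a one-line corollary, and I would present it as such.

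\begin{proof}
By \cite[6.25, 6.39]{Pat}, a connected complex Lie group $G$ (viewed as a real Lie group) is of polynomial growth if and only if its Lie algebra $\fg$ is of Type~R. By Lemma~\ref{TRisN}, this holds if and only if $\fg$ is nilpotent, i.e., if and only if $G$ is nilpotent.
\end{proof}
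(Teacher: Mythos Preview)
Your proof is correct and follows exactly the route the paper intends: the corollary is stated immediately after Lemma~\ref{TRisN} with no separate proof, and your argument (polynomial growth $\Leftrightarrow$ Type~R via \cite[6.25, 6.39]{Pat}, then Type~R $\Leftrightarrow$ nilpotent via Lemma~\ref{TRisN}) is precisely the intended one-line deduction.
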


\begin{lm} \label{fefgin}
Let $\fg$ be a solvable complex Lie algebra and  $\fe$ a real
ideal s.t. $\fg/\fe$ is the largest Type~R quotient of $\fg$. Then
$\fe=\fg_\infty$.
\end{lm}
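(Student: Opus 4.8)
The plan is to identify $\fg_\infty$ (the intersection of the lower central series of $\fg$) as the smallest real ideal with Type~R quotient. Since $\fg$ is finite-dimensional the lower central series stabilizes, so $\fg/\fg_\infty$ is nilpotent and hence of Type~R; as $\fg_\infty$ is a complex, in particular real, ideal, the fact that $\fg/\fe$ is the \emph{largest} Type~R quotient forces $\fe\subseteq\fg_\infty$. It remains to prove $\fg_\infty\subseteq\fe$, and I would do this by establishing the sharper statement that \emph{every} real ideal $\fj$ of $\fg$ with $\fg/\fj$ of Type~R satisfies $\fg_\infty\subseteq\fj$; the case $\fj=\fe$ then closes the argument.

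The key step is to replace a real ideal by a complex one without leaving the Type~R class. Given a real ideal $\fj$, the subspace $i\fj$ is again a real ideal, since the bracket of $\fg$ is $\CC$-bilinear: $[\xi,iv]=i[\xi,v]\in i\fj$ for $\xi\in\fg$ and $v\in\fj$. Multiplication by $i$ is a $\CC$-linear automorphism of the underlying vector space of $\fg$ which commutes with $\ad\xi$ for every $\xi\in\fg$ and which carries $\fj$ onto $i\fj$; hence it conjugates $\ad\xi|_{\fj}$ to $\ad\xi|_{i\fj}$, so these two operators have the same characteristic polynomial. Dividing the characteristic polynomial of $\ad\xi$ on $\fg$ by this common factor shows that the operators induced by $\ad\xi$ on $\fg/\fj$ and on $\fg/i\fj$ have equal characteristic polynomials, for every $\xi\in\fg$. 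In particular $\fg/i\fj$ is of Type~R whenever $\fg/\fj$ is.

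To finish, observe that $\fj\cap i\fj$ is stable under multiplication by $i$, hence is a complex subspace, and it is an ideal as the intersection of the two real ideals $\fj$ and $i\fj$. Moreover $\fg/(\fj\cap i\fj)$ embeds as a Lie subalgebra of $\fg/\fj\oplus\fg/i\fj$ via $x\mapsto(x+\fj,\,x+i\fj)$. Since the Type~R property is inherited by finite direct sums and by subalgebras — in each case the eigenvalues of $\ad\xi$ form a sub-multiset of eigenvalues already known to lie in $i\R$ — the complex Lie algebra $\fg/(\fj\cap i\fj)$ is of Type~R, hence nilpotent by Lemma~\ref{TRisN}. Therefore $\fg_\infty\subseteq\fj\cap i\fj\subseteq\fj$, as desired; taking $\fj=\fe$ yields $\fg_\infty\subseteq\fe$ and hence $\fe=\fg_\infty$. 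I expect the only point requiring care to be the elementary linear-algebra bookkeeping of characteristic polynomials under an invariant subspace and its image under a conjugation; the rest is formal, and solvability of $\fg$ is used only to guarantee that Theorem~\ref{GuOs}(A) provides the ideal $\fe$ in the first place.
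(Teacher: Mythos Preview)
Your argument is correct and follows the paper's strategy: show that $\fg/i\fj$ is again of Type~R, obtain a complex ideal with Type~R quotient, and invoke Lemma~\ref{TRisN}. The paper reaches the complex ideal a touch more directly by exploiting the minimality of~$\fe$: once $\fg/i\fe$ is of Type~R one has $\fe\subseteq i\fe$, hence $\fe=i\fe$ is already complex, so neither the intersection $\fj\cap i\fj$ nor the appeal to closure of Type~R under subalgebras and direct sums is needed.
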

\begin{proof}
First, we show that $\fe$ is a complex ideal.  Note that $i\fe$ is
a real ideal. We claim that $\fg/i\fe$ is of  Type~R. Indeed, we
have to show that $[\xi,\eta]-\la\eta\in i\fe$ for some
$\xi,\eta\in\fg$ and $\la\in\CC$ implies $\la\in i\R$. Multiplying
by $i$, we obtain $[\xi,i\eta]-\la i\eta\in \fe$.    Since
$\fg/\fe$ is of  Type~R, we have that $\la\in i\R$.

Further, $\fg/\fe$ is the largest  Type~R quotient of $\fg$; so
$\fe \subset i\fe$. Therefore $\fe=i\fe$ and $\fe$ is a complex
ideal.

Since $\fg/\fe$ is a complex Lie algebra of Type~R,
Lemma~\ref{TRisN} implies that $\fg/\fe$ is nilpotent. On the
other hand, $\fg/\fg_\infty$ is the largest nilpotent quotient of
$\fg$. Therefore, $\fg_\infty\subset \fe$. Finally, since
$\fg/\fg_\infty$ is of Type~R, we have $\fe\subset \fg_\infty$.
\end{proof}

\begin{co}\label{corinsr}
If $\fg$ is a complex Lie algebra, then  \eqref{rinsr} is
satisfied.
\end{co}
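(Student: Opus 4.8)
The plan is simply to unwind the general definition~\eqref{defe} in the complex case, identifying its two summands. Recall that $\fe=\fe_\fr+[\fs_{nc},\fr]$, where $\fr$ is the solvable radical, $\fs=\fs_c\oplus\fs_{nc}$ is a Levi complement split into a compact part and a maximal semisimple part without compact summands, and $\fr/\fe_\fr$ is the largest R-decomposed quotient of~$\fr$. So it suffices to establish two things: that $\fs_{nc}=\fs$, and that $\fe_\fr=\fr_\infty$.

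The first is immediate, as already noted before the statement: a nonzero complex semisimple Lie algebra, regarded as a real Lie algebra, is never compact (its Killing form is indefinite), so $\fs_c=\{0\}$ and $\fs_{nc}=\fs$. For the second, I would first observe that every quotient of the solvable algebra $\fr$ is solvable, and that a solvable Lie algebra is R-decomposed if and only if it is of Type~R (the semisimple summand in an R-decomposition is then a solvable semisimple algebra, hence zero). Consequently the largest R-decomposed quotient of $\fr$ is precisely its largest Type~R quotient; that is, $\fr/\fe_\fr$ is the largest Type~R quotient of~$\fr$. Now Lemma~\ref{fefgin}, applied with the solvable complex Lie algebra $\fr$ in place of $\fg$ and the ideal $\fe_\fr$ in place of $\fe$, yields $\fe_\fr=\fr_\infty$.

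Substituting these two identities into~\eqref{defe} gives $\fe=\fr_\infty+[\fs,\fr]$, which is~\eqref{rinsr}. I do not anticipate a genuine obstacle here: the real content has been front-loaded into Lemmas~\ref{TRisN} and~\ref{fefgin}, and the only point requiring a moment's care is the verification that, for the \emph{solvable} algebra $\fr$, the notions ``largest R-decomposed quotient'' and ``largest Type~R quotient'' coincide, so that Lemma~\ref{fefgin} applies verbatim.
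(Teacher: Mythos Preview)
Your proposal is correct and follows exactly the line the paper intends: the corollary is stated without proof because it is meant to be read off from the remark preceding it (that $\fs_{nc}=\fs$ in the complex case) together with Lemma~\ref{fefgin} applied to~$\fr$. Your care in checking that for a solvable algebra the largest R-decomposed quotient coincides with the largest Type~R quotient---so that Lemma~\ref{fefgin} indeed applies to the $\fe_\fr$ of~\eqref{defe}---fills in the only step the paper leaves implicit.
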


Now we return to the Lie group level.
\begin{pr} \label{exraco}
The exponential radical of a connected complex Lie group $G$
coincides with the normal complex Lie subgroup $E$ s.t. $G/E$ is
the largest quotient of $G$ that is locally isomorphic to a direct
product of a nilpotent  and   semisimple complex Lie group.
\end{pr}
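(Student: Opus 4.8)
The plan is to deduce Proposition~\ref{exraco} by combining the Lie-algebra description obtained in Corollary~\ref{corinsr} with the group-level characterization of the exponential radical from Corollary~\ref{Liealde}, together with the complex analogue of $P$-decomposed groups afforded by Corollary~\ref{PGisN}. First I would observe that, by Corollary~\ref{PGisN}, a connected complex Lie group is of polynomial growth iff it is nilpotent; hence for complex Lie groups the class of $P$-decomposed groups (locally isomorphic to a product of a polynomial-growth group and a semisimple group) coincides with the class of groups locally isomorphic to a product of a nilpotent and a semisimple complex Lie group. Equivalently, on the Lie-algebra side, an $R$-decomposed complex Lie algebra is precisely a direct sum of a semisimple algebra and a nilpotent algebra, by Lemma~\ref{TRisN}. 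So the only thing left is to match up the two definitions of the exponential radical with this reformulation.

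Next I would invoke Corollary~\ref{Liealde}: the exponential radical $E$ of $G$ exists and equals the closure of $\langle\exp\fe\rangle$, where $\fe$ is given by~\eqref{defe}; and by the definition recalled before Lemma~\ref{Rdecl}, $G/E$ is the largest $P$-decomposed quotient of $G$. By the reformulation above, $G/E$ is therefore the largest quotient of $G$ locally isomorphic to a product of a nilpotent and a semisimple complex Lie group, which is exactly the group $E$ described in the statement. The one remaining point is to check that $E$ is a \emph{complex} Lie subgroup, i.e. that $\fe$ is a complex ideal; this follows because Corollary~\ref{corinsr} gives $\fe=\fr_\infty+[\fs,\fr]$, and both $\fr_\infty$ (the intersection of the lower central series of the complex solvable radical $\fr$) and $\fs$, $[\fs,\fr]$ are visibly complex subspaces of $\fg$. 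Being the integral subgroup generated by $\exp$ of a complex ideal, $\langle\exp\fe\rangle$ is a complex integral subgroup, and its closure $E$ is a complex Lie subgroup; normality is already part of the definition of the exponential radical.

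I would then assemble these remarks into a short argument: by Corollary~\ref{Liealde} the exponential radical $E$ is closed and normal with $G/E$ the largest $P$-decomposed quotient; by Corollary~\ref{PGisN} (equivalently Lemma~\ref{TRisN}) ``$P$-decomposed'' in the complex setting is the same as ``locally isomorphic to a product of a nilpotent and a semisimple complex Lie group''; and by Corollary~\ref{corinsr} the relevant ideal $\fe$ is complex, so $E$ is a complex Lie subgroup. Hence $E$ is precisely the subgroup named in the proposition, and its uniqueness is inherited from the uniqueness of the largest quotient with the stated property.

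The step I expect to require the most care is the translation between the ``largest quotient'' formulations: one must be sure that the largest $P$-decomposed quotient of $G$ and the largest quotient locally isomorphic to a nilpotent-by-semisimple product literally coincide (not merely have the same Lie algebra), which uses that local isomorphism type is detected by the Lie algebra and that the family of such quotients is closed under the fibre-product construction already exploited in the proof of Lemma~\ref{gelaqal}. The complexity of $\fe$ is essentially a bookkeeping check once Corollary~\ref{corinsr} is in hand, so I do not anticipate difficulty there.
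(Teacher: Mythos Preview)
Your proposal is correct and follows essentially the same approach as the paper: use Corollary~\ref{corinsr} to see that $\fe=\fr_\infty+[\fs,\fr]$ is a complex ideal (hence $E$ is a complex Lie subgroup), and then use Corollary~\ref{PGisN} to translate ``$P$-decomposed'' into ``locally isomorphic to a product of a nilpotent and a semisimple complex Lie group.'' The paper's version is terser---it simply asserts that $G/E$ is the largest such quotient---whereas you are more explicit about matching up the two ``largest quotient'' characterizations, but the underlying argument is the same.
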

\begin{proof}
Let $\fg$ be the Lie algebra of $G$ and $\fe$ defined
by~\eqref{defe}. Corollary~\ref{corinsr} implies that
$\fe=\fr_\infty+[\fs,\fr]$; thus $\fe$ is a complex ideal
in~$\fg$. Since $E=\overline{\langle\exp \fe\rangle}$, it is a
complex Lie subgroup of $G$. Therefore $G/E$ is a complex Lie
group. It follows from Corollary~\ref{PGisN} that any P-decomposed
complex Lie group is locally isomorphic to a direct product of a
nilpotent and  a semisimple complex group. It is not hard to see
that $G/E$ is the largest quotient with this property.
\end{proof}

Now we consider the exponential radical of a connected linear
complex Lie group. Recall that every such group has the form
$B\rtimes L$, where $B$  is complex simply connected solvable and
$L$ is linearly complex reductive \cite[Th.~16.3.7]{HiNe}.

\begin{thm} \label{rexp1}
Let $G$ be a connected complex Lie group  of the form $B\rtimes
L$, where $B$  is complex simply connected solvable and $L$ is
linearly complex reductive. Suppose that the exponential radical
of $G$ is  $\{1\}$. Then the action of $L$ on $B$ is trivial, so
$G=B\times L$.
\end{thm}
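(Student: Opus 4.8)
The plan is to show that the hypothesis ``exponential radical of $G$ is $\{1\}$'' forces the Lie-algebra ideal $\fe$ from~\eqref{rinsr} to vanish, and then to extract from $\fe=\{1\}$ that the $L$-action on $B$ is trivial. Write $\fg=\fb\rtimes\fl$ for the Lie algebras of $B$ and $L$, and let $\fr$ be the solvable radical of $\fg$. Since $L$ is linearly complex reductive, its Lie algebra $\fl$ is reductive, so $\fl=\fz\oplus\fs$ where $\fz$ is the center of $\fl$ (abelian) and $\fs=[\fl,\fl]$ is semisimple; moreover $\fs$ is a Levi complement in $\fg$ and $\fr=\fb\rtimes\fz$ is the solvable radical (because $\fb$ is solvable and $\fz$ is a central torus-type piece, and $B\rtimes L$ is the given Levi--Malcev-type decomposition at the group level). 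The first step is to record that by Proposition~\ref{exraco} the exponential radical of $G$ is $\overline{\langle\exp\fe\rangle}$ with $\fe=\fr_\infty+[\fs,\fr]$, so the hypothesis gives $\fe=\{0\}$, i.e. $\fr_\infty=\{0\}$ and $[\fs,\fr]=\{0\}$.

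The second step interprets these two vanishings. From $[\fs,\fr]=\{0\}$ we get that $\fs$ acts trivially on $\fr$, so on $\fb$; hence $\fg=\fr\oplus\fs$ as a direct sum of ideals and $S$ (the Levi subgroup) commutes with $R$. From $\fr_\infty=\{0\}$ we get that $\fr$ is nilpotent. So far this tells us $\fg$ is the direct product of a nilpotent algebra $\fr$ and a semisimple $\fs$ — consistent with vanishing exponential radical — but it does not yet say the $\fz$-action on $\fb$ is trivial. The real content is therefore: a nilpotent $\fr$ of the form $\fb\rtimes\fz$, with $\fb$ simply connected solvable and $\fz$ the Lie algebra of a linearly complex reductive (hence complexified-torus) group $L_0$, must in fact be a direct sum $\fb\oplus\fz$. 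Here is where I would use reductivity crucially: the adjoint action of $\fz$ on $\fb$ integrates to a holomorphic action of the complexified torus on $B$; since the $\fz$-action is by nilpotent operators (as $\fr$ is nilpotent, $\ad\xi$ is nilpotent for every $\xi\in\fr$, in particular for $\xi\in\fz$), but a representation of a complexified torus is completely reducible (cf.\ \cite[Th.~15.2.10]{HiNe}), a nilpotent element in the image of a semisimple-type representation must be zero. Hence $[\fz,\fb]=\{0\}$, so $[\fl,\fb]=\{0\}$, which means the $L$-action on $B$ is trivial and $G=B\times L$.

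The main obstacle I anticipate is the passage from the infinitesimal statement ``$\ad\fz$ acts nilpotently on $\fb$ and is a piece of a completely reducible torus-representation, hence acts trivially'' to a clean argument: one must be careful that $\fz$ might not act \emph{semisimply} a priori — rather, $\fz$ is abelian and $\ad|_\fb$ is a commuting family of operators which, on one hand, are nilpotent (from nilpotency of $\fr$) and, on the other hand, generate a representation of the complexified torus $L_0=Z^*$ which decomposes into one-dimensional (hence diagonalizable) summands. A nilpotent diagonalizable operator is zero; so $\ad\xi|_\fb=0$ for all $\xi\in\fz$. I would phrase this via \cite[Th.~15.2.10]{HiNe}: the holomorphic representation $\Ad\colon L_0\to\GL(\fb)$ is completely reducible, so its differential $\ad|_{\fz}$ is a direct sum of weights $\la_j\colon\fz\to\CC$; each $\ad\xi|_\fb$ has eigenvalues $\{\la_j(\xi)\}$, and nilpotency forces every $\la_j=0$. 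Once $[\fl,\fb]=\{0\}$ is established, it follows that $\fg=\fb\oplus\fl$ as Lie algebras; since $B$ is simply connected and $L$ is connected with trivial intersection in $G$ (the semidirect product being a genuine one), the group identity $G=B\times L$ follows at once.

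\begin{proof}
Let $\fg$, $\fb$, $\fl$ be the Lie algebras of $G$, $B$, $L$, so that $\fg=\fb\rtimes\fl$. Since $L$ is linearly complex reductive, $\fl$ is reductive; write $\fl=\fz\oplus\fs$ where $\fz=Z(\fl)$ is abelian and $\fs=[\fl,\fl]$ is semisimple. Then $\fs$ is a Levi complement in $\fg$ and the solvable radical is $\fr=\fb\rtimes\fz$. By Corollary~\ref{corinsr}, the ideal $\fe$ of~\eqref{defe} equals $\fr_\infty+[\fs,\fr]$, and by Proposition~\ref{exraco} the exponential radical of $G$ is $\overline{\langle\exp\fe\rangle}$. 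The hypothesis therefore gives $\fe=\{0\}$, i.e.
$$
\fr_\infty=\{0\}\qquad\text{and}\qquad[\fs,\fr]=\{0\}\,.
$$

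From $[\fs,\fr]=\{0\}$ we see that $\fs$ commutes with $\fr$; together with $[\fg,\fg]\subset\fr+\fs$ this gives $\fg=\fr\oplus\fs$ as a direct sum of ideals. From $\fr_\infty=\{0\}$ the solvable radical $\fr$ is nilpotent; in particular $\ad\xi$ is a nilpotent operator on $\fr$, hence on $\fb$, for every $\xi\in\fr$, and in particular for every $\xi\in\fz$.

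Now consider the holomorphic representation $\Ad\colon L\to\GL(\fb)$ restricted to the identity component $L_0$. Since $L$ is linearly complex reductive, $L_0$ is a complexified torus, so by \cite[Th.~15.2.10]{HiNe} the representation $\Ad|_{L_0}$ is completely reducible; thus $\fb=\bigoplus_j \fb_j$ with each $\fb_j$ one-dimensional and $L_0$ acting on $\fb_j$ by a holomorphic character. Differentiating, $\ad|_{\fz}$ decomposes as a direct sum of linear functionals (weights) $\la_j\colon\fz\to\CC$, so for $\xi\in\fz$ the operator $\ad\xi|_{\fb}$ is diagonalizable with eigenvalues $\la_j(\xi)$. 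But $\ad\xi|_{\fb}$ is nilpotent by the previous paragraph; a diagonalizable nilpotent operator is zero, so $\la_j(\xi)=0$ for all $j$ and all $\xi\in\fz$. Hence $[\fz,\fb]=\{0\}$, and since also $[\fs,\fb]=\{0\}$, we get $[\fl,\fb]=\{0\}$.

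Therefore $\fg=\fb\oplus\fl$ as a direct sum of ideals, so the $L$-action on $B$ defining the semidirect product $G=B\rtimes L$ has trivial differential. As $B$ is connected, this action is trivial, and $G=B\times L$.
\end{proof}
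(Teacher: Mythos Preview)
Your argument is essentially the same as the paper's: split $\fl=\fz\oplus\fs$, use $\fe=\{0\}$ to get $[\fs,\fr]=\{0\}$ (so $\fs$ acts trivially on $\fb$) and $\fr_\infty=\{0\}$ (so $\fr$ is nilpotent), and then kill the $\fz$-action on $\fb$ by playing complete reducibility of the torus representation against nilpotency of $\ad\xi|_\fb$. The paper phrases the last step at the group level (the action of $T$ on $\fb$ is both completely reducible and unipotent, hence trivial), while you phrase it infinitesimally (each $\ad\xi|_\fb$ is diagonalizable and nilpotent, hence zero); these are the same argument.

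There is, however, a genuine slip in your written proof. You write ``restricted to the identity component $L_0$'' and then ``Since $L$ is linearly complex reductive, $L_0$ is a complexified torus''. But $G$ is connected and $L\cong G/B$, so $L$ is already connected and $L_0=L$; and a linearly complex reductive group is certainly not a complexified torus in general (think $L=\SL_n(\CC)$). What you need---and what your proof plan and the subsequent weight argument make clear you intend---is the integral subgroup $T\!:=\langle\exp\fz\rangle$, i.e.\ the connected center (equivalently, the radical) of $L$. That subgroup is closed and linearly complex reductive, hence a complexified torus, and \cite[Th.~15.2.10]{HiNe} then gives the complete reducibility of $\Ad|_T$ on $\fb$ that you use. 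Replace $L_0$ by $T$ (and drop ``identity component'') and the proof goes through; as written, the sentence is false.
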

\begin{proof}
Consider a Levi-Malcev decomposition $L=TS$, where $T$~is the
solvable radical of~$L$ and~$S$ is a semisimple Levi subgroup.
Since both $T$ and $S$ are integral subgroups, it suffice to
prove that the actions of the corresponding Lie subalgebras $\ft$
and $\fs$ on the Lie algebra $\fb$ of $B$ are trivial.

Obviously, $\fb+\ft$ is a semidirect sum. Since the Lie algebra
$\fl$ of $L$ is  reductive, $\ft$~is  central in~$\fl$ \cite[Pr.
5.7.3]{HiNe}. Then $\fb\rtimes\ft$ is a solvable ideal; hence,
$B\rtimes T$ is a closed normal solvable subgroup. Moreover, it
follows from Proposition~\ref{exraco} that the solvable radical
$R$ of $G$ is nilpotent. Therefore $B\rtimes T$ is nilpotent (in
fact,  $B\rtimes T=R$).

Since $T$ is linearly complex reductive,  $\fb$ is a completely
reducible module w.r.t. the adjoint action of $T$ [ibid.,
15.2.10]. On the other hand, since~$R$ connected and
nilpotent, the adjoint action of $R$ on its Lie algebra $\fr$ is
unipotent \cite[Ch.~2,  Th. 1.6]{OV3}. In particular, the action
of $T$ on $\fb$ is unipotent. Thus the action of $T$ on $\fb$ is
trivial, so is the action of $\ft$ on $\fb$.

On the other hand, Proposition~\ref{exraco} implies that the
action of $\fs$ on $\fr$ (in particular, on $\fb$) is trivial.
\end{proof}

\begin{pr}\label{expe}
Let $G$  be a connected linear complex Lie group. Then the exponential
radical of $G$ is simply connected nilpotent and coincides with $\exp \fe$.
\end{pr}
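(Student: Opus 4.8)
The plan is to write $G=B\rtimes L$ with $B$ simply connected solvable and $L$ linearly complex reductive, as guaranteed by \cite[Th.~16.3.7]{HiNe}, and to relate the exponential radical $E$ of $G$ to the object $\exp\fe$, where $\fe=\fr_\infty+[\fs,\fr]\subset\fg$ is the ideal from Corollary~\ref{corinsr}. By Proposition~\ref{exraco} we already know $E=\overline{\langle\exp\fe\rangle}$, so the whole content of the proposition is: (a) $\langle\exp\fe\rangle$ is already closed, and (b) it is simply connected nilpotent and equals $\exp\fe$ as a set.

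First I would locate $\fe$ inside the solvable radical $\fr$ of $\fg$. Since $\fe=\fr_\infty+[\fs,\fr]$ and $\fr_\infty\subset[\fr,\fr]$ while $[\fs,\fr]\subset[\fg,\fr]$, Lemma~\ref{eisnilp} (or directly \cite[Cor.~5.4.15]{HiNe}) tells us $\fe$ is a nilpotent ideal of $\fg$ contained in the nilradical. Now pass to the quotient group modulo $E$: by Theorem~\ref{rexp1} applied to $G/E$ (which is linear with trivial exponential radical — one must check linearity of $G/E$ is preserved, or more safely work inside $B\rtimes L$ directly), the conclusion is that modding out $E$ kills the $L$-action, so $E$ must already ``absorb'' the part of $B$ on which $L$ acts nontrivially together with $\fr_\infty$. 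Concretely I expect $\fe\subset\fb$: indeed $\fr=\fb\rtimes\ft$ with $\ft$ central in $\fl$ and acting on $\fb$, so $\fr_\infty$ and $[\fs,\fr]$ both land in $\fb$ because the relevant brackets push everything into $\fb$ (using that $\ft$ is central in $\fl$, hence $[\ft,\ft]=0$ and $[\fs,\ft]=0$, so the only surviving components of iterated brackets are in $\fb$). Once $\fe\subset\fb$, the integral subgroup $\langle\exp\fe\rangle$ is an integral subgroup of the simply connected solvable group $B$ corresponding to an ideal; such a subgroup is automatically closed and simply connected (a standard fact about simply connected solvable Lie groups: integral subgroups attached to Lie subalgebras are closed, cf.\ the closedness arguments already used in the proof of Theorem~\ref{linrad}), and since $\fe$ is nilpotent the group $\langle\exp\fe\rangle$ is nilpotent with $\exp$ a diffeomorphism onto it. Hence $E=\langle\exp\fe\rangle=\exp\fe$ with no closure needed.

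The main obstacle I anticipate is the inclusion $\fe\subset\fb$, i.e.\ showing cleanly that $\fr_\infty+[\fs,\fr]$ avoids the reductive directions $\ft$. One has to argue that $\ft$, being central in $\fl$ and acting completely reducibly yet unipotently on $\fb$ (so trivially, as in the proof of Theorem~\ref{rexp1} — but here we cannot assume trivial action since $E\ne\{1\}$ in general), still contributes nothing to the lower central series of $\fr$ beyond $\fb$: the point is that $\fr/\fb\cong\ft$ is abelian, so $\fr_\infty\subset\fb$, and $[\fs,\fr]\subset[\fs,\fg]$ with $\fs$ semisimple acting on $\fr$ through derivations that preserve $\fb$ and act trivially on $\fr/\fb$ (since $\fs$ acts trivially on $\fl$), giving $[\fs,\fr]\subset\fb$ as well. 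With that settled, everything else reduces to standard structure theory of simply connected solvable complex Lie groups, and the proposition follows.
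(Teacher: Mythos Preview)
Your approach is correct but differs from the paper's. The paper argues via maximal compact subgroups: from $\fe\subset[\fg,\fr]$ one gets $\langle\exp\fe\rangle\subset(G,R)$, and linearity in the form $K^*\cap(G,R)=\{1\}$ (\cite[Th.~16.3.7]{HiNe}) forces $\langle\exp\fe\rangle$ to miss every maximal compact subgroup, whence closedness and simple-connectedness follow from \cite[Cor.~14.5.6, Th.~14.3.11]{HiNe}. You instead exploit linearity through the decomposition $G=B\rtimes L$ and the Lie-algebra computation $\fe\subset\fb$ (your argument for this is fine: with $\fr=\fb\rtimes\ft$, $\ft$ central in $\fl$, one has $\fr_\infty\subset[\fr,\fr]\subset\fb$ and $[\fs,\fr]=[\fs,\fb]\subset\fb$), after which $\langle\exp\fe\rangle$ is an integral subgroup of the simply connected solvable group $B$, hence closed and simply connected by \cite[Prop.~11.2.15]{HiNe}. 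Your route has the bonus of yielding $E\subset B$ immediately, something the paper proves separately in Theorem~\ref{GEdecom} via the representation radical; the paper's route, on the other hand, does not require unpacking the radical as $\fb\rtimes\ft$ or choosing the Levi complement compatibly with~$L$. One small correction: the identification $E=\overline{\langle\exp\fe\rangle}$ is Corollary~\ref{Liealde}, not Proposition~\ref{exraco}.
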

\begin{proof}
Since $\fe\subset [\fg,\fr]$ and $(G, R)$ is an integral
subgroup, the normal integral subgroup $\langle\exp \fe\rangle$
is contained in $(G, R)$ \cite[Lem.~11.2.2]{HiNe}.

Let $K$ be a maximal compact subgroup of $G$. Since $G$ is linear,
we have $K^*\cap(G, R)=\{1\}$ \cite[Th.~16.3.7]{HiNe}. Then the
normal subgroup $\langle\exp \fe\rangle$ intersects $K$ trivially;
hence, it intersects all maximal compact subgroups of $G$
trivially. Whence it is closed and simply connected
\cite[Cor.~14.5.6, Th.~14.3.11]{HiNe}. Thus $E=\langle\exp
\fe\rangle$. Finally, by Lemma~\ref{eisnilp}, $\fe$  is nilpotent;
therefore $\langle\exp \fe\rangle=\exp \fe$.
\end{proof}

Finally, we can prove the main result of the section.

\begin{thm}\label{GEdecom}
Let $G=B\rtimes L$, where $B$  is simply connected solvable and $L$ is
linearly complex reductive, and let  $E$ be the exponential radical of $G$.
Then $E\subset B$ and $G/E\cong  B/E\times L$.
\end{thm}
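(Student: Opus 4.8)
The plan is to reduce everything to Theorem~\ref{rexp1} after identifying $E$ explicitly on the Lie algebra level. Since $G=B\rtimes L$ is a connected linear complex Lie group (by \cite[Th.~16.3.7]{HiNe}), Proposition~\ref{expe} gives $E=\exp\fe$, and Corollary~\ref{corinsr} gives $\fe=\fr_\infty+[\fs,\fr]$, where $\fr$ is the solvable radical of the Lie algebra $\fg$ of $G$ and $\fs$ is a Levi complement. So the first task is to show $\fe\subset\fb$, where $\fb$ is the Lie algebra of $B$. Since $\fl$ (the Lie algebra of $L$) is reductive, write $\fl=\ft\oplus\fs$ with $\ft$ the center of $\fl$; then $\fr=\fb\rtimes\ft$ is the solvable radical of $\fg$ (it is a solvable ideal whose quotient $\fs$ is semisimple). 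Because $[\fr,\fr]\subset\fb$ and $[\fr,\fb]\subset\fb$, the lower central series of $\fr$ lies in $\fb$ from the second term on, so $\fr_\infty\subset\fb$; and $[\fs,\fr]=[\fs,\fb]+[\fs,\ft]=[\fs,\fb]\subset\fb$ since $\ft$ is central in $\fl$. Hence $\fe\subset\fb$, and therefore $E=\exp\fe\subset B$, which is the first assertion.

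Next I would pass to the quotient. The ideal $\fe$ is complex (Corollary~\ref{corinsr}), so $E$ is a closed normal complex Lie subgroup of $G$ contained in $B$; consequently $G/E\cong(B/E)\rtimes L$, the copy of $L$ in $G/E$ being isomorphic to $L$ because $L\cap E\subset L\cap B=\{1\}$. In this new decomposition $B/E$ is again a complex simply connected solvable group --- a quotient of the simply connected solvable group $B$ by the connected normal subgroup $E$ --- and $L$ is still linearly complex reductive. Moreover the exponential radical of $G/E$ is trivial: by the defining property of $E$ as the exponential radical of $G$, the group $G/E$ is P-decomposed, so it is its own largest P-decomposed quotient.

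Thus Theorem~\ref{rexp1}, applied to the decomposition $G/E=(B/E)\rtimes L$, yields that the action of $L$ on $B/E$ is trivial, i.e. $G/E\cong(B/E)\times L$, which is the second assertion. The whole argument is really just this reduction; the only points requiring a little care are the identification $\fr=\fb\rtimes\ft$ together with $\fr_\infty\subset\fb$, and the verification that $B/E$ inherits the hypotheses of Theorem~\ref{rexp1}. Neither is a genuine obstacle, so I do not expect any hard step here.
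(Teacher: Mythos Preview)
Your proof is correct and follows the same overall strategy as the paper: establish $E\subset B$, deduce $G/E\cong(B/E)\rtimes L$ with $B/E$ simply connected solvable and with trivial exponential radical, and then invoke Theorem~\ref{rexp1}. The only substantive difference lies in the argument for $E\subset B$. The paper routes this through the representation radical: it observes $\fe\subset\fr\cap[\fg,\fg]=\rad[\fg,\fg]$, whence $E\subset\Rad(G,G)$, and then appeals to \cite[Pr.~4.44, Cor.~4.39]{Le02} to conclude that $\Rad(G,G)$ equals the representation radical and is contained in~$B$. Your route is more direct and self-contained: you identify $\fr=\fb\rtimes\ft$ explicitly (with $\ft=Z(\fl)$) and verify $\fr_\infty\subset\fb$ and $[\fs,\fr]\subset\fb$ by an elementary bracket computation, bypassing the representation-radical machinery entirely. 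The paper's detour yields the slightly stronger intermediate inclusion $E\subset\Rad(G,G)$, but this is not used anywhere else, so nothing is lost by your approach.
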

 \begin{proof}
Note that   $\fr_\infty\subset [\fr,\fr]\subset \fr\cap [\fg,\fg]$
and $[\fs,\fr] \subset \fr\cap  [\fg,\fg]$. Since, by
Corollary~\ref{corinsr},   $\fe=\fr_\infty+[\fs,\fr]$   and, by
\cite[Lem.~5.6.4(ii)]{HiNe}, $\rad[\fg,\fg]=\fr\cap[\fg,\fg]$, we
have $\fe\subset \rad[\fg,\fg]$.  Proposition~\ref{expe} implies
that $E\subset \Rad(G,G)$. It follows from \cite[Pr.~4.44]{Le02}
that the subgroup $B$ contains the representation radical, which
is, by definition, is the intersection of all kernels of
semisimple holomorphic representations of $G$.  The assumption of
the theorem yields  that that $G$ is linear
\cite[Th.~16.3.7]{HiNe}, so the representation radical of $G$
coincides with $\Rad(G,G)$ \cite[Cor.~4.39]{Le02}. Hence,
$E\subset B$.

Consider the action of $L$ on $B/E$ s.t. $G/E\cong B/E\rtimes L$.
Since $G$ is linear, it follows from Proposition~\ref{expe} that
$E$ is simply connected. Therefore, $B/E$ is simply connected and,
evidently, solvable. On the other hand, by
Proposition~\ref{exraco}, the exponential radical of $G/E$ is
trivial. Thus Theorem~\ref{rexp1} implies that $G/E\cong B/E\times
L$.
\end{proof}

\section{Asymptotic behavior of a word length function}\label{ABWLF}

Recall that a \emph{length function} on a locally compact group
$G$ is a locally bounded function $\ell\!:G\to \R$ s.t.
$$
 \ell(gh)\le \ell(g)+\ell(h)\qquad (g, h \in G)\,.
$$
Note that we does not assume in general that $\ell$ is
\emph{symmetric}, i.e., $\ell(e) = 0$ and $\ell(g^{-1})=\ell(g)$
for all $g\in G$.

If $G$ is compactly generated, i.e., there is a relatively
compact generating set $U$ ($\bigcup_{n=0}^{\infty} U^{n} = G$,
where  $e\in U$ and $U^{0}\!:=\{e\}$), then the function defined
by
\begin{equation}\label{wordlen}
\ell_U(g)\!: = \min \{ n \!: \, g \in U^{n} \}\,.
\end{equation}
is a length function. Any such length function is called  a
\emph{word length function}.

For  positive functions  $\tau_1$ and $\tau_2$ on a set $X$, we
say that $\tau_1$   \emph{dominated by}  $\tau_2$ (at infinity)
and write $\tau_1\lesssim\tau_2$ if there are $C,D>0$ s.t.
$$\tau_1(x)\le C\tau_2(x) + D\qquad (x\in X)\,.$$
Moreover,  if $\tau_1\lesssim\tau_2$ and $\tau_2\lesssim\tau_1$,
then we say that  $\tau_1$ and $\tau_2$  are \emph{equivalent} (at
infinity) and write $\tau_1\simeq\tau_2$. Note that the length
functions equivalence means that we have a bijective
quasi-isometry between the corresponding metric spaces.

This section is devoted to the proof of the following result.

\begin{thm}\label{qitiG}
Suppose that $G$ is a connected linear complex Lie group. Fix a
decomposition $G=B\rtimes L$, where $B$  is complex simply
connected solvable and $L$ is linearly complex reductive. Let $E$
be the exponential radical of $G$, $\pi\!:B\to B/E$ the quotient
homomorphism, $\fb$ and $\fe$ the Lie algebras of $B$ and $E$,
resp., and $\fv$  a complementary subspace to $\fh\cap \fe  $
in~$\fh$, where $\fh$ is a Cartan subalgebra in $\fb$.   Then
$$
\tau\!:\fe\times \fv\times L\to G\!:(\eta,\xi,l)\mapsto
\exp(\eta)\exp(\xi)\,l
$$
is a  biholomorphic equivalence of complex manifolds s.t.
\begin{equation}\label{thredec}
\ell(\exp(\eta)\exp(\xi)l)\simeq
\log(1+\ell_0(\exp\eta))+\ell_1(\pi(\exp(\xi)))+\ell_2(l)
\end{equation}
on $\fe\times \fv\times L$, where $\eta\in \fe$, $\xi\in\fv$, and
$l\in L$,   and
$$
\log(1+\ell_0(\exp\eta))\simeq \log (1+\|\eta\|)\quad \text{on
$\fe$}\,,
$$
where $\|\cdot\|$ is a norm on $\fe$ and $\ell$, $\ell_0$,
$\ell_1$, and $\ell_2$ are word length functions on $G$, $E$,
$B/E$, and $L$, resp.
\end{thm}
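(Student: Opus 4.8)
The plan is to build the quasi-isometry statement out of three ingredients: (1) the known asymptotics for the exponential radical $E$ as a strictly exponentially distorted nilpotent subgroup, giving the $\log(1+\|\eta\|)$ term; (2) the fact that $B/E$ is simply connected solvable of polynomial growth (hence nilpotent, by Corollary~\ref{PGisN}) so a word length function on it is equivalent to a polynomial-type expression in exponential coordinates, contributing the $\ell_1(\pi(\exp(\xi)))$ term; and (3) the fact that $L$, being linearly complex reductive, is a closed subgroup whose word length function is $\ell_2$, and that by Theorem~\ref{GEdecom} the quotient splits as $G/E\cong B/E\times L$, so the $\ell_1$ and $\ell_2$ parts combine additively on the quotient. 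The map $\tau$ is biholomorphic because $E=\exp\fe$ (Proposition~\ref{expe}), $\exp$ restricted to $\fv$ followed by $\pi$ gives a chart on a subgroup of the simply connected nilpotent group $B/E$ transverse to $\fh\cap\fe$, and the semidirect product $G=B\rtimes L$ gives the final coordinate; composing these polynomial diffeomorphisms yields a biholomorphism.

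First I would fix once and for all that all word length functions on a given compactly generated group are equivalent (a standard fact for $G$, $E$, $B/E$, $L$, each compactly generated since connected Lie), so the particular choices of $U$ are immaterial. Next I would treat the quotient $Q:=G/E$. By Theorem~\ref{GEdecom} we have $Q\cong B/E\times L$ as complex Lie groups, and the word length of a direct product is equivalent to the sum of the word lengths of the factors (one inclusion is clear, the other uses that $U_1\times U_2$ generates and $U^n\subset (U_1^n\times 1)(1\times U_2^n)$). On $B/E$ — simply connected nilpotent — I invoke the Guivarc'h/Varopoulos/Karidi asymptotics (as cited in the introduction via \cite{VSC92,Kar94}) to say the word length $\ell_1$ in exponential coordinates is equivalent to a weighted-homogeneous norm; the precise shape is absorbed into the symbol $\ell_1(\pi(\exp(\xi)))$ so I need not make it explicit. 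This gives the second and third summands of \eqref{thredec} once I push forward along $\pi$.

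Then I would handle $E$ itself. By Proposition~\ref{expe}, $E$ is simply connected nilpotent and $E=\exp\fe$; by Theorem~\ref{GuOs}(C) (or rather its connected-group consequence, Corollary~\ref{Liealde} together with Cornulier's result) $E$ is \emph{strictly exponentially distorted} in $G$, which says exactly that the restriction to $E$ of a word length function on $G$ is equivalent to $\log(1+\ell_0(\cdot))$ where $\ell_0$ is a word length function on $E$ in its own right; and since $\ell_0$ on the simply connected nilpotent $E$ is equivalent to a homogeneous norm, $\log(1+\ell_0(\exp\eta))\simeq\log(1+\|\eta\|)$ for any norm $\|\cdot\|$ on $\fe$ (using that $\log(1+p(\eta))\simeq\log(1+\|\eta\|)$ for any weighted-homogeneous $p$ because $\log$ kills the exponents). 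The remaining work is to assemble the global estimate on the product coordinates $\exp(\eta)\exp(\xi)l$: the upper bound $\ell(\exp(\eta)\exp(\xi)l)\le \ell(\exp\eta)+\ell(\exp(\xi))+\ell(l)$ is immediate from subadditivity plus the three pieces above (pushing $\ell$ on $G$ down through $\pi$ for the middle factor, which only decreases length up to a constant since $\pi$ is a proper-ish quotient map between compactly generated groups). The lower bound is the real content: I would project to $Q=G/E$ to recover $\ell_1(\pi(\exp\xi))+\ell_2(l)$ from below (the quotient map is $1$-Lipschitz and coarsely surjective, and on $Q$ the $E$-part disappears), and separately I must bound the $E$-contribution from below along the \emph{fibre} — this is where strict exponential distortion is used in the nontrivial direction, to see that an element with large $\|\eta\|$ cannot be written short even when multiplied by bounded-length pieces from the complement.

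The main obstacle I expect is precisely this lower bound on the fibre direction — controlling $\ell(\exp(\eta)\exp(\xi)l)$ from below by $\log(1+\|\eta\|)$ when $\exp(\xi)l$ is not bounded. The subtlety is that the distortion of $E$ is measured inside $G$, but conjugation by elements of the complement $\exp(\fv)L$ can move a given $\eta\in\fe$ around, potentially shortening it; so I need that the distortion function of $E$ in $G$ is \emph{uniform} over such conjugations, which follows from $\fe$ being stable under automorphisms of $\fg$ (Lemma~\ref{eisnilp}, Theorem~\ref{GuOs}(B)) — conjugation acts on $\fe$ by a bounded-distortion linear automorphism depending tamely on the complement coordinate, and one checks that for the $\log$-scale this tameness is enough. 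I would make this precise by writing $\exp(\eta)\exp(\xi)l=\exp(\xi)l\cdot\big((\exp(\xi)l)^{-1}\exp(\eta)(\exp(\xi)l)\big)$, noting the conjugate lies in $E$ with controlled homogeneous norm, and applying the distortion estimate to that conjugate together with the $Q$-projection estimate for $\exp(\xi)l$; the two combine to give \eqref{thredec}. Everything else (biholomorphy of $\tau$, equivalence of word length functions, passage through $\pi$) is routine.
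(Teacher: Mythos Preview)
Your overall architecture matches the paper's: split into $E$-, $B/E$- and $L$-contributions via Theorem~\ref{GEdecom}, use the strict exponential distortion of~$E$ (Proposition~\ref{ellnorm}) for the logarithmic term, and assemble the pieces by a triangle-inequality lemma of the shape ``$\ell(nx)\simeq\ell(n)+\wt\ell(\pi(x))$ on $N\times X$'' (the paper's Lemma~\ref{NXdec}; your conjugation manoeuvre amounts to the same thing). The genuine gap is the step you treat as routine: the comparison $\ell(\exp\xi)\simeq\ell_1(\pi(\exp\xi))$ on~$\fv$. In your upper-bound paragraph you say it follows from ``pushing $\ell$ on $G$ down through $\pi$\ldots which only decreases length'', but projection gives only $\ell_1\circ\pi\lesssim\ell$, the \emph{wrong} direction for bounding $\ell(\exp\xi)$ above by $\ell_1(\pi(\exp\xi))$. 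Your lower-bound argument needs the same inequality: to extract $\log(1+\|\eta\|)$ from $\ell(g)$ via $\exp\eta=g\,(\exp(\xi)l)^{-1}$ you must first know $\ell(\exp(\xi)l)\lesssim\ell(g)$, and the tameness of $\Ad(\exp(\xi)l)$ on the log scale (which is correct) only helps once that is in hand.

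The point is that $\exp\fv$ is not a subgroup of~$G$ and $\xi\mapsto\exp\xi$ does not split $B\to B/E$ as a homomorphism, so there is no soft reason for this section to be undistorted: a priori $\exp\xi$ could be reached in $G$ by a short word detouring through~$E$. The paper isolates this as the key Lemma~\ref{ellell1}, and its proof uses the one hypothesis you never invoke --- that $\fv$ lies inside a \emph{Cartan} subalgebra $\fh$ of~$\fb$. One works in the closed nilpotent subgroup $H=\exp\fh$, lifts a minimal $\pi(S)$-word for $\pi(g)$ to some $h\in H$, controls $\ell(h^{-1}g)$ via the polynomial group law on~$H$ together with Lemma~\ref{loglogeq} and the distortion of~$E$, and finally uses that $p|_\fv\!:\fv\to\fb/\fe$ is a linear isomorphism. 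Without this lemma neither direction of~\eqref{thredec} closes. (The biholomorphy of~$\tau$ is also less immediate than you indicate --- surjectivity of $(\eta,\xi)\mapsto\exp(\eta)\exp(\xi)$ onto~$B$ is the content of Proposition~\ref{expVE} --- but that is secondary.)
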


We begin with a decomposition of a word length function on a
semidirect product.

\begin{pr}\label{ellNH}
Let $G=N\rtimes H$ be a semidirect product of compactly generated
locally compact groups. Suppose that $\ell$ and $\ell_1$  are
word length functions on $G$ and $H$, resp.  Then
$$
\ell(nh)\simeq  \ell(n)+\ell_1(h)\quad\text{on $N\times H$}
\quad(\text{$n\in N$, $h\in H$})\,.
$$
\end{pr}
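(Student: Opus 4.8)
The plan is to prove the two inequalities $\ell(nh)\lesssim \ell(n)+\ell_1(h)$ and $\ell(n)+\ell_1(h)\lesssim \ell(nh)$ separately, using the freedom to choose convenient generating sets (any two word length functions on a compactly generated group are equivalent, so the choice does not matter). Fix a relatively compact symmetric generating neighbourhood $V$ of the identity in $N$ and a relatively compact symmetric generating neighbourhood $W$ of the identity in $H$; let $U\!:=VW$, which is a relatively compact generating neighbourhood of the identity in $G$, and let $\ell=\ell_U$, $\ell_1=\ell_W$, and write $\ell_V$ for the word length function on $N$ determined by $V$.

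For the upper bound $\ell(nh)\lesssim \ell(n)+\ell_1(h)$: if $n\in V^k$ and $h\in W^m$, then $nh\in V^kW^m$. Because $H$ normalizes $N$, one can push all the $W$-factors to the right past the $V$-factors at the cost of replacing each $V$ by a conjugate $W^jV W^{-j}$ with $0\le j\le m$. Here I would use that the set $\bigcup_{j}W^jVW^{-j}$, where $j$ ranges so that the conjugating element lies in the relatively compact set $W^m\cdots$ — more precisely, I would first observe that conjugation by a fixed compact set $W$ sends the compact set $V$ into some $V^{p}$ for a fixed integer $p$ (since $W V W^{-1}$ is relatively compact in $N$ and $V$ generates $N$). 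Iterating, $W^j V W^{-j}\subset V^{p^j}$, which is too lossy; instead the standard trick is to enlarge $V$ from the start so that $WVW^{-1}\subseteq V$ is impossible in general, so one settles for the estimate $nh\in V^{k'}W^m$ with $k'$ controlled by $k$ and $m$ — but this still only gives a bound like $k\cdot(\text{const})^m$. The correct route, and the one I would actually take, is the reverse: bound $\ell_V(n)$ and $\ell_1(h)$ in terms of $\ell(nh)$ first (the easy direction), and then get the upper bound from a symmetric/inverse argument. So let me restructure.

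For the lower bound $\ell(n)+\ell_1(h)\lesssim \ell(nh)$: the projection $q\colon G\to H$ is a continuous homomorphism sending $U=VW$ into a relatively compact set containing $W$, hence into some $W^{s}$; therefore $\ell_1(h)=\ell_1(q(nh))\le s\,\ell(nh)$, giving $\ell_1(h)\lesssim\ell(nh)$. For the $N$-part, write $nh\in U^r$ with $r=\ell(nh)$; applying $q$ shows $h\in W^{sr}$, so $h^{-1}\in W^{sr}$, and then $n=(nh)h^{-1}\in U^r W^{sr}\subset U^{r}U^{sr}=U^{(s+1)r}$; since $n\in N$ and $U^t\cap N$ is relatively compact, hence contained in $V^{c(t)}$ — here I need that $U^t\cap N\subseteq V^{O(t)}$, which follows because $U^t$ is relatively compact and any relatively compact subset of $N$ lies in a fixed power of $V$; applied with $t=(s+1)r$ this is not uniform in $r$. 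The clean fix: $U^t\subseteq N\cdot W^t$ (pushing $W$'s right, absorbing the resulting $N$-conjugates — and a relatively compact subset of $N$ is in $V^{m_0}$ for fixed $m_0$, while conjugation by $W^t$ maps $V$ into a relatively compact set, so $U^t\cap N\subseteq V^{a t}$ for a constant $a$ by an easy induction on $t$ pushing one letter at a time). Thus $\ell_V(n)\le a(s+1)r=a(s+1)\ell(nh)$, so $\ell_V(n)\lesssim\ell(nh)$. Combining, $\ell(n)+\ell_1(h)\lesssim 2\ell(nh)\lesssim\ell(nh)$. For the upper bound, by the same one-letter-at-a-time pushing, $V^kW^m\subseteq U^{k+m}$ up to the conjugation cost, but pushing a $W$-letter leftward past the product $n$ only changes $n$ to a conjugate which still lies in $V^{ak}$ or so; carefully, $V^kW^m\subseteq N W^m$ with the $N$-part in $V^{k}$ conjugated by at most $W^m$, hence in $V^{a^m k}$ — still exponential.

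The genuine obstacle, then, is the upper bound $\ell(nh)\lesssim\ell(n)+\ell_1(h)$, and the resolution I expect the paper uses is to choose $V$ adapted to the $H$-action: since $H$ is compactly generated with compact generating set $W$ and $N$ is compactly generated, one can enlarge $V$ to a relatively compact symmetric generating set with $WVW^{-1}\subseteq V^{d}$ for a \emph{fixed} $d$ (this is automatic for \emph{any} $V$: $WVW^{-1}$ is relatively compact, hence in some $V^d$). Then $nh\in V^kW^m$ with $n\in V^k$, $h\in W^m$; I want instead to write $nh$ directly: $nh$ is a word of length $k$ in $V$ followed by a word of length $m$ in $W$, i.e. of length $k+m$ in $U=V\cup W\subseteq U$ — \emph{this already works}, because $V\subseteq VW^0\cdot\ldots$ hmm, $V\subseteq U$ requires $e\in W$, which holds, and $W\subseteq U$ requires $e\in V$, which holds. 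So $V\cup W\subseteq U$, hence $V^kW^m\subseteq U^{k}U^{m}=U^{k+m}$, giving $\ell(nh)\le \ell_V(n)+\ell_1(h)$ with no conjugation needed at all. Therefore the proof is: (i) $V\cup W\subseteq U$ gives the upper bound immediately; (ii) the projection $q\colon G\to H$ gives $\ell_1(h)\lesssim\ell(nh)$; (iii) the inverse-and-push argument above, using $U^t\cap N\subseteq V^{at}$ (proved by induction on $t$, pushing one $W$-letter at a time and using $W^{\pm1}VW^{\mp1}\subseteq V^d$), gives $\ell_V(n)\lesssim\ell(nh)$. I would present it in exactly this order, flagging step (iii)'s inductive estimate $U^t\cap N\subseteq V^{at}$ as the one real computation. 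Below is the plan written up.

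\medskip

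\emph{Proof plan.} Fix relatively compact symmetric generating neighbourhoods $V$ of $e$ in $N$ and $W$ of $e$ in $H$, and put $U:=VW$; then $U$ is a relatively compact generating neighbourhood of $e$ in $G$, and since $e\in V$ and $e\in W$ we have $V\cup W\subseteq U$. Write $\ell:=\ell_U$, $\ell_1:=\ell_W$, and let $\ell_V$ be the word length function on $N$ associated with $V$; by equivalence of word length functions it suffices to prove the claim for these particular choices, and $\ell(n)\simeq\ell_V(n)$ on $N$.

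\emph{Step 1 (upper bound).} If $n\in V^{k}$ and $h\in W^{m}$ then $nh\in V^{k}W^{m}\subseteq U^{k}U^{m}=U^{k+m}$, so $\ell(nh)\le\ell_V(n)+\ell_1(h)$, whence $\ell(nh)\lesssim\ell_V(n)+\ell_1(h)\simeq\ell(n)+\ell_1(h)$.

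\emph{Step 2 (the $H$-component).} The canonical projection $q\colon G=N\rtimes H\to H$ is a continuous homomorphism, so $q(U)$ is relatively compact and hence $q(U)\subseteq W^{s}$ for some $s\in\N$. For $g\in U^{r}$ we get $q(g)\in W^{sr}$; applying this to $g=nh$, for which $q(nh)=h$, yields $\ell_1(h)\le s\,\ell(nh)$, i.e. $\ell_1(h)\lesssim\ell(nh)$.

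\emph{Step 3 (the $N$-component).} Since $WVW^{-1}$ is relatively compact in $N$ and $V$ generates $N$, there is $d\in\N$ with $WVW^{-1}\subseteq V^{d}$ and $W^{-1}VW\subseteq V^{d}$. By induction on $t$, pushing one $W$-letter at a time to the right (resp. left) across the $V$-letters and using these inclusions, one checks $U^{t}\cap N\subseteq V^{at}$ for a constant $a$ depending only on $d$: writing an element of $U^t$ as a word in $V\cup W^{\pm1}$ of length at most $t$ (after absorbing, $U=VW$ so $U^t\subseteq V W V W\cdots\subseteq$ a word of length $\le 2t$ in $V\cup W$), move all $W^{\pm1}$ to the right; each commutation replaces a block $V^{p}$ by a block inside $V^{dp}$, and since the total number of $W$-letters is at most $t$ the $N$-part ends up in $V^{a t}$. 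Now let $r:=\ell(nh)$, so $nh\in U^{r}$. By Step 2, $h\in W^{sr}$, hence $h^{-1}\in W^{sr}\subseteq U^{sr}$, and therefore $n=(nh)h^{-1}\in U^{r}U^{sr}=U^{(s+1)r}$. Since $n\in N$, the estimate above gives $\ell_V(n)\le a(s+1)r=a(s+1)\,\ell(nh)$, i.e. $\ell_V(n)\lesssim\ell(nh)$.

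\emph{Conclusion.} Adding the bounds of Steps 2 and 3, $\ell(n)+\ell_1(h)\simeq\ell_V(n)+\ell_1(h)\lesssim\ell(nh)$, and with Step 1 this proves $\ell(nh)\simeq\ell(n)+\ell_1(h)$ on $N\times H$. The only step requiring care is the uniform inclusion $U^{t}\cap N\subseteq V^{at}$ in Step 3, which is where the semidirect-product structure (normality of $N$, compactness of the generators) is used.
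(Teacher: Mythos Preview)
There is a genuine gap: you conflate $\ell(n)$, the restriction to $N$ of the word length function on $G$, with $\ell_V(n)$, the intrinsic word length on $N$. Your setup asserts ``$\ell(n)\simeq\ell_V(n)$ on $N$'', but this fails whenever $N$ is distorted in $G$. Take $G=\R\rtimes\R$ with action $h\cdot n=e^{h}n$: the element $(e^{t},0)\in N$ equals $(0,1)^{t}(1,0)(0,-1)^{t}$, so $\ell((e^{t},0))\le 2t+1$, while $\ell_V((e^{t},0))\sim e^{t}$. The same example kills your inclusion $U^{t}\cap N\subseteq V^{at}$ in Step~3; your own argument (``each commutation replaces a block $V^{p}$ by a block inside $V^{dp}$'') in fact yields $V^{d^{t}}$, not $V^{at}$.

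The repair is to drop $\ell_V$ entirely---the statement never mentions it. For the upper bound, subadditivity gives $\ell(nh)\le\ell(n)+\ell(h)$, and since $\ell|_H$ is a length function on $H$ while $\ell_1$ is a \emph{word} length function, $\ell(h)\lesssim\ell_1(h)$. For the lower bound, your own computation in Step~3 already shows $n=(nh)h^{-1}\in U^{(s+1)r}$ with $r=\ell(nh)$, which gives $\ell(n)\le (s+1)\ell(nh)$ directly; combined with Step~2 this finishes the proof. This is exactly the content of the paper's Lemma~\ref{NXdec} (taking $X=\sigma(H)$ for the splitting $\sigma\!:H\to G$), which is how the paper proves the proposition.
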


For the proof, we need three lemmas.

\begin{lm}\label{wlfeq} \emph{(\cite[Th.~1.1.21]{Sch93} or
\cite[Th.~4.3(a)]{Ak08})} Each (in particular, each symmetric)
word length function dominates all length functions. As a
corollary,  all word length functions are equivalent.
\end{lm}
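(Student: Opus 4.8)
The plan is to prove the substantive assertion—that a fixed word length function $\ell_U$ dominates every length function $\ell$—by a direct telescoping estimate, and then to read off the corollary by symmetry. The key preliminary observation is that $\ell$ is \emph{bounded} on $U$. Indeed, $U$ is relatively compact, so its closure $\overline{U}$ is compact; since a length function is by definition locally bounded, covering $\overline{U}$ by finitely many neighbourhoods on which $\ell$ is bounded shows that
$$
M\!:=\sup_{u\in\overline{U}}\ell(u)<\infty\,.
$$
Because $e\in U$ and subadditivity forces $\ell(e)\ge 0$ (apply $\ell(gh)\le\ell(g)+\ell(h)$ to $g=h=e$), we also have $M\ge 0$. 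This is the only place where compact generation genuinely enters.

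Now fix $g\in G$ and set $n\!:=\ell_U(g)$. By the definition \eqref{wordlen} of $\ell_U$ we have $g\in U^n$, so $g=u_1\cdots u_n$ with each $u_i\in U$. Applying subadditivity $n-1$ times gives
$$
\ell(g)=\ell(u_1\cdots u_n)\le\sum_{i=1}^n\ell(u_i)\le nM=M\,\ell_U(g)\,.
$$
In the degenerate case $g=e$ (where $n=0$) this reads $\ell(e)\le M$, which holds since $e\in U$. Since $\ell_U\ge 0$ everywhere, we conclude that $\ell(g)\le (M+1)\ell_U(g)+(M+1)$ for every $g\in G$; that is, there are constants $C,D>0$ (namely $C=D=M+1$) with $\ell\le C\ell_U+D$, so $\ell$ is dominated by $\ell_U$.

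For the corollary, note that a word length function is in particular a length function and is non-negative. Given two word length functions $\ell_U$ and $\ell_V$, applying the assertion just proved with $\ell=\ell_V$ gives $\ell_V\lesssim\ell_U$, and applying it with $\ell=\ell_U$ gives $\ell_U\lesssim\ell_V$; together these yield $\ell_U\simeq\ell_V$.

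The argument is essentially routine, and I expect no real obstacle beyond two small points of bookkeeping, both forced by the fact that length functions are \emph{not} assumed symmetric: first, one must not presume $\ell(e)=0$, which is why the additive constant $D$ cannot be dropped; second, one must verify $M<\infty$ from local boundedness rather than from any a priori global bound. Both are handled above.
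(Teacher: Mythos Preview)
Your argument is correct and is the standard one; the paper itself gives no proof at all but merely cites \cite[Th.~1.1.21]{Sch93} and \cite[Th.~4.3(a)]{Ak08}, so there is nothing substantive to compare against. Your handling of the two bookkeeping points (local boundedness on $\overline{U}$ giving $M<\infty$, and the additive constant needed because $\ell(e)$ is only known to be $\ge 0$) is exactly right.
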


\begin{lm}\label{GG1}
Let $\pi\!: G\to G_1$ be a continuous homomorphism of compactly
generated locally compact groups and let $\ell$ and $\ell_1$ be
word length functions on $G$ and $G_1$, resp.

\emph{(A)} Then $ \ell_1(\pi(g))\lesssim  \ell(g)$ on $G$.

\emph{(B)} If, in addition,  $\si\!:G_1\to G$  is a continuous
homomorphism s.t. $\pi\si=1$, then $\ell_1(g_1)\simeq
\ell(\si(g_1))$ on $G_1$.
\end{lm}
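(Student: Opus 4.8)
Lemma~\ref{GG1} concerns the behaviour of word length functions under continuous homomorphisms of compactly generated locally compact groups. The plan is to deduce both parts directly from Lemma~\ref{wlfeq}, which asserts that any word length function dominates every length function (and hence that all word length functions on a fixed group are equivalent). For part (A), I would first observe that the pullback $g\mapsto \ell_1(\pi(g))$ is itself a length function on $G$: indeed it is locally bounded because $\pi$ is continuous and $\ell_1$ is locally bounded, and it is subadditive because $\pi$ is a homomorphism and $\ell_1$ is subadditive. Then Lemma~\ref{wlfeq} applied to the word length function $\ell$ on $G$ gives $\ell_1(\pi(g))\lesssim\ell(g)$ on $G$, which is exactly the claim.

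For part (B), the hypothesis $\pi\sigma=1$ means $\sigma$ is a continuous section, in particular injective, identifying $G_1$ with a closed subgroup of $G$. One inequality is immediate from part (A) applied to $\sigma$ (or from the same pullback argument): $g\mapsto \ell(\sigma(g_1))$ is a length function on $G_1$, so Lemma~\ref{wlfeq} gives $\ell(\sigma(g_1))\lesssim\ell_1(g_1)$ on $G_1$. For the reverse inequality, apply part (A) to the homomorphism $\pi$ but evaluate along $\sigma$: since $\pi(\sigma(g_1))=g_1$, we get $\ell_1(g_1)=\ell_1(\pi(\sigma(g_1)))\lesssim\ell(\sigma(g_1))$ on $G_1$. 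Combining the two inequalities yields $\ell_1(g_1)\simeq\ell(\sigma(g_1))$ on $G_1$.

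There is essentially no obstacle here: the whole content is that pulling back a length function along a continuous homomorphism again gives a length function, after which Lemma~\ref{wlfeq} does all the work. The only point requiring a word of care is local boundedness of the pullback, which follows since continuous images of relatively compact sets are relatively compact and $\ell_1$ is bounded on such sets; everything else is formal manipulation of the relations $\lesssim$ and $\simeq$.
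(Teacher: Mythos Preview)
Your proof is correct. Both your argument and the paper's rest on Lemma~\ref{wlfeq}, but they invoke different aspects of it. You use the primary assertion---a word length function dominates every length function---applied to the pullback $g\mapsto\ell_1(\pi(g))$, which you correctly identify as a length function on~$G$ (and similarly for $g_1\mapsto\ell(\si(g_1))$ in part~(B)). The paper instead uses the corollary that all word length functions are equivalent to replace $\ell$ and $\ell_1$ by word length functions attached to conveniently chosen generating sets $U$ and $U_1$ with $\pi(U)\subset U_1$ (resp.\ $\si(U_1)\subset U$), obtaining the sharper pointwise inequalities $\ell_1(\pi(g))\le\ell(g)$ and $\ell(\si(g_1))\le\ell_1(g_1)$. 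Your route is a bit more conceptual and never touches generating sets; the paper's is more hands-on and yields genuine inequalities rather than~$\lesssim$. One minor remark: your parenthetical that $\si$ identifies $G_1$ with a \emph{closed} subgroup of $G$ is neither needed nor immediately justified, but it plays no role in the argument and can simply be dropped.
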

\begin{proof}
By Lemma~\ref{wlfeq}, we can choose relatively compact generating
sets $U$ and $U_1$ determining  $\ell$ and $\ell_1$, resp., at our
request. If we put $\pi(U)\subset U_1$, then $ \ell_1(\pi(g))\le
\ell(g)$ for each $g\in G$, which proves part~(A). If we put
$\si(U_1)\subset U$, then $ \ell(\si(g_1))\le \ell_1(g_1)$ for
each $g_1\in G_1$. Thus part~(B) follows part~(A).
\end{proof}

\begin{lm}\label{NXdec}
\emph{(cf. \cite[Lem.~4.3]{Co11})} Let $G$ be a  group and  $N$ a
normal subgroup. Suppose that there are symmetric length functions
$\ell$ and $\ell_1$ on $G$ and $G/N$, resp., and a subset $X$ in
$G$ s.t.
$$
\ell_1(\pi(x))\simeq \ell(x)\quad\text{on
$X$}\quad\text{and}\qquad\ell_1(\pi(g))\lesssim
\ell(g)\quad\text{on $G$}\,,
$$
where $\pi\!:G\to G/N$ is the quotient homomorphism. Then
$$
\ell(nx)\simeq  \ell(n)+\ell_1(\pi(x))\quad\text{on $N\times X$}
\quad(\text{$n\in N$, $x\in X$})\,.
$$
\end{lm}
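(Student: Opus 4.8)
The plan is to prove Lemma~\ref{NXdec} directly from the
hypotheses by establishing the two inequalities $\lesssim$ and
$\gtrsim$ separately, the first being routine and the second
being the place where the hypotheses on $X$ are used.

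For the inequality $\ell(nx)\lesssim \ell(n)+\ell_1(\pi(x))$ on
$N\times X$: by subadditivity of $\ell$ we have
$\ell(nx)\le\ell(n)+\ell(x)$, and since $x\in X$ the hypothesis
$\ell_1(\pi(x))\simeq\ell(x)$ on $X$ gives
$\ell(x)\lesssim\ell_1(\pi(x))$. Combining these yields the
desired bound with no further work.

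For the reverse inequality $\ell(n)+\ell_1(\pi(x))\lesssim
\ell(nx)$ on $N\times X$, I would bound the two summands
separately. First, applying $\ell_1\circ\pi\lesssim\ell$ on all
of $G$ to the element $nx$ and using $\pi(nx)=\pi(x)$ (because
$n\in N=\Ker\pi$) gives $\ell_1(\pi(x))\lesssim\ell(nx)$.
Second, to bound $\ell(n)$: write $n=(nx)x^{-1}$, so by
subadditivity $\ell(n)\le\ell(nx)+\ell(x^{-1})=\ell(nx)+\ell(x)$
using symmetry of $\ell$; then again $\ell(x)\lesssim
\ell_1(\pi(x))\lesssim\ell(nx)$ on $X$, so
$\ell(n)\lesssim\ell(nx)$. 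Adding the two estimates gives
$\ell(n)+\ell_1(\pi(x))\lesssim\ell(nx)$ on $N\times X$, as
required. Here I should be a little careful that the implied
constants can be chosen uniformly: each application of
$\tau_1\lesssim\tau_2$ contributes fixed constants $C,D$
independent of the point, and a finite sum/composition of such
relations is again of this form, so the final estimate holds with
a single pair of constants valid on all of $N\times X$.

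The only mild subtlety — and the step I would watch most
carefully — is the symmetry hypothesis: it is needed precisely to
replace $\ell(x^{-1})$ by $\ell(x)$ in the bound for $\ell(n)$,
and (implicitly) so that the equivalence classes behave well
under inversion when chaining the estimates. Since the lemma
explicitly assumes $\ell$ and $\ell_1$ are symmetric, there is no
real obstacle; the argument is essentially a two-line
manipulation of the triangle inequality together with the two
given comparisons, and it is this lemma that will feed into the
proof of Proposition~\ref{ellNH} by taking $X=H$ (viewed inside
$G=N\rtimes H$) and noting that $\pi|_H$ is an isomorphism onto
$G/N$, so that Lemma~\ref{GG1}(B) supplies
$\ell_1(\pi(h))\simeq\ell(h)$ on $H$ and Lemma~\ref{GG1}(A)
supplies $\ell_1(\pi(g))\lesssim\ell(g)$ on $G$.
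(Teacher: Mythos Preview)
Your proposal is correct and follows essentially the same route as the paper: both directions are handled exactly as you describe, using subadditivity, the symmetry $\ell(x^{-1})=\ell(x)$, the identity $\pi(nx)=\pi(x)$, and the two comparison hypotheses. The paper merely writes the reverse inequality as a single displayed chain (ending with $3\ell(nx)$), but the ingredients and their order of use are the same as yours.
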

\begin{proof}
Since $\ell(x)\lesssim \ell_1(\pi(x))$ on $X$, we have
$$
\ell(nx)\le  \ell(n)+\ell(x)\lesssim  \ell(n)+\ell_1(\pi(x))\,.
$$
On the other hand, $\ell(n)\le
\ell(nx)+\ell(x^{-1})=\ell(nx)+\ell(x)$ and
$\ell_1(\pi(x))=\ell_1(\pi(nx))\lesssim\ell(nx)$.  Therefore,
\begin{multline*}
\ell(n)+\ell_1(\pi(x))\le \ell(nx)+\ell(x)+\ell_1(\pi(x))\lesssim \\
\ell(nx)+2\ell_1(\pi(x))\lesssim  3\ell(nx)\simeq \ell(nx)\,.
\end{multline*}
\end{proof}

\begin{proof}[Proof of Proposition~\ref{ellNH}]
Consider a homomorphism $\si\!:H\to G$ splitting the quotient
homomorphism $\pi\!:G\to H$. Lemma~\ref{GG1} implies that
$\ell_1(h)\simeq \ell(\si(h))$ on~$H$. Since $\ell_1(\pi(g))$ is
a length function on $G$, by virtue of Lemma~\ref{wlfeq}, we have
$\ell_1(\pi(g))\lesssim  \ell(g)$ on $G$. Evidently, we can
assume that $\ell$ and $\ell_1$ are symmetric and, finally, apply
Lemma~\ref{NXdec} with $X=\si(H)$.
\end{proof}

Further, we need asymptotic behavior of the restriction of a word
length function on the exponential radical.

\begin{pr}\label{ellnorm}
Let $G$ be a connected linear complex Lie group,  $E$  the
exponential radical of $G$, and $\fe$  the Lie algebra of $E$. If
$\|\cdot\|$ is a norm on $\fe$ and $\ell$ is a word  length
function on $G$, then
$$
\ell(\exp(\eta)) \simeq \log (1+\|\eta\|) \quad\text{on $\fe$} \,.
$$
\end{pr}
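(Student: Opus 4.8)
The plan is to prove the two estimates $\ell(\exp\eta)\lesssim \log(1+\|\eta\|)$ and $\log(1+\|\eta\|)\lesssim \ell(\exp\eta)$ on $\fe$ separately, the first being the substantive ``exponential distortion'' bound and the second an easy ``polynomial'' bound coming from holomorphicity of a word length function on $G$.

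\medskip

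\noindent\textbf{Upper bound.} By Proposition~\ref{expe}, $E$ is simply connected nilpotent and $E=\exp\fe$, and by Corollary~\ref{Liealde} (together with Proposition~\ref{exraco}) $E$ is the exponential radical in the sense of Cornulier, hence by its defining property (the Guivarc'h--Osin phenomenon, Theorem~\ref{GuOs}(C) transported to the connected case) $E$ is \emph{strictly exponentially distorted} in $G$: if $\ell_E$ is a word length function on $E$ and $\ell$ one on $G$, then $\ell(g)\simeq \log(1+\ell_E(g))$ on $E$. So the upper bound reduces to controlling $\ell_E(\exp\eta)$ on $\fe$ by a polynomial in $\|\eta\|$. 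Since $E$ is simply connected nilpotent, a word length function on $E$ is equivalent to $\sum_j |c_j(\exp\eta)|^{1/d_j}$ for suitable positive rationals $d_j$ coming from the descending central series (this is the Heisenberg-type behaviour cited in the introduction from \cite{VSC92,Kar94}, and used in \cite{ArAMN}); in particular $\ell_E(\exp\eta)\lesssim (1+\|\eta\|)^{N}$ for some $N$. Combining, $\ell(\exp\eta)\lesssim \log\bigl(1+(1+\|\eta\|)^N\bigr)\simeq \log(1+\|\eta\|)$.

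\medskip

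\noindent\textbf{Lower bound.} Here I would use that $G$ is linear: fix a faithful holomorphic representation $G\hookrightarrow \GL_m(\CC)$, so the matrix entries of $g$ and of $g^{-1}$ are holomorphic functions on $G$, and by Lemma~\ref{wlfeq} each of them, being of exponential type with respect to the associated length function, satisfies a bound $|a_{ij}(g)|, |a_{ij}(g^{-1})|\le e^{C\ell(g)+D}$. Restricting to $E=\exp\fe$ and differentiating (or just composing with $\log$ of the matrix, which is a \emph{polynomial} in the entries because $\exp\eta$ is unipotent-up-to-a-semisimple-part --- more precisely, since $\fe$ is nilpotent and acts nilpotently, $\exp\eta\mapsto\eta$ is polynomial in the matrix entries), we get $\|\eta\|\lesssim e^{C\ell(\exp\eta)+D}$, i.e.\ $\log(1+\|\eta\|)\lesssim \ell(\exp\eta)$. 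Alternatively, and perhaps cleaner: the quotient map $G\to G/E$ together with a word length $\ell_{G/E}$ satisfies $\ell_{G/E}(\pi(g))\lesssim \ell(g)$ by Lemma~\ref{GG1}(A); on $E$ this says nothing, so instead one uses that the \emph{translates} of a fixed compact generating set cover $\exp\eta$ only after exponentially many steps --- this is exactly the $h^ng^n$-type computation from the introduction, run in reverse inside the linear model.

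\medskip

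\noindent\textbf{Main obstacle.} The delicate point is the upper bound, specifically justifying that a word length function on the \emph{connected} nilpotent group $E$ grows at most polynomially in $\|\eta\|$ and, more importantly, that $E$ is strictly exponentially distorted in the \emph{connected complex} group $G$ --- Cornulier's results in \cite{Co08} are stated for real groups, so one must check (as was done in Section~\ref{ERLCG} via Corollary~\ref{corinsr} and Proposition~\ref{exraco}) that the complex exponential radical coincides with the real one underlying it, and that the distortion estimate, which is a statement about the underlying real group, transfers verbatim. Granting that identification, the rest is bookkeeping with word length functions on simply connected nilpotent groups, for which the required polynomial-in-coordinates equivalence is classical.
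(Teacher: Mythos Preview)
Your upper bound is exactly the paper's argument: strict exponential distortion of $E$ in $G$ (Cornulier \cite[Th.~6.5]{Co08}) gives $\ell|_E\simeq \log(1+\ell_0)$ for a word length $\ell_0$ on $E$, and the polynomial growth of $\ell_0(\exp\eta)$ in $\|\eta\|$ on the simply connected nilpotent group $E$ finishes it.

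Your lower bound, however, takes an unnecessary detour. Strict exponential distortion is an \emph{equivalence}, not just an inequality, so the lower bound also reduces to a statement on $E$ alone, namely $\log(1+\|\eta\|)\lesssim\log(1+\ell_0(\exp\eta))$, i.e.\ $\|\eta\|$ is bounded by a power of $\ell_0(\exp\eta)$. Together with the upper estimate you already used, this is precisely the content of Lemma~\ref{loglogeq}, and the paper simply quotes that lemma to get both directions in one line. Linearity of $G$ enters only through Proposition~\ref{expe} (to ensure $E$ is simply connected nilpotent), not through any matrix-coefficient bound. Your route via a faithful embedding does work, but the step ``$\exp\eta\mapsto\eta$ is polynomial in the matrix entries'' requires that the image of $\fe$ consist of nilpotent matrices so that $\exp\eta$ is unipotent and the matrix logarithm terminates; this holds because $\fe\subset[\fg,\fr]$ and elements of $[\fg,\fr]$ act nilpotently in every finite-dimensional representation, not merely because $\fe$ is abstractly nilpotent. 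So your argument is correct once that is said, but it is a roundabout substitute for what Lemma~\ref{loglogeq} already packages.
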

For the proof, we need the following lemma.
\begin{lm}\label{loglogeq}
Let $G$ be a simply connected nilpotent (real or complex) Lie
group with the Lie algebra  $\fg$. If $\|\cdot\|$ is a norm on
$\fg$ and $\ell$ is a word length function on $G$ then
 $$
\log(1+\|\xi\|)\simeq \log(1+\ell(\exp\xi))\qquad (\xi\in \fg)\,.
$$
\end{lm}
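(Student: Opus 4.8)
The plan is to reduce the statement to a comparison between the word metric on $G$ and the "coordinate size" on $\fg$, and then to invoke the known polynomial two-sided bounds for word length on simply connected nilpotent Lie groups. First I would recall that, by the Guivarc'h--Karydas estimates (see \cite{VSC92}, \cite{Kar94}), for a word length function $\ell$ on a simply connected nilpotent Lie group $G$ of nilpotency class $c$ with adapted coordinates $\exp\xi\mapsto(x_1(\xi),\ldots,x_n(\xi))$ of weights $w_1,\ldots,w_n$, one has
$$
\ell(\exp\xi)\simeq \sum_{j=1}^n |x_j(\xi)|^{1/w_j}\qquad(\xi\in\fg)\,.
$$
In particular there are constants $a,b>0$ and $C,D\ge 0$ with $|x_j(\xi)|^{1/c}\le C\,(1+\ell(\exp\xi))+D$ for each $j$ (lower weights only help), and conversely $\ell(\exp\xi)\le C'\sum_j|x_j(\xi)|+D'$ whenever all $|x_j(\xi)|\ge 1$, say. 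Since all norms on the finite-dimensional space $\fg$ are equivalent, $\|\xi\|\simeq\max_j|x_j(\xi)|$ in the multiplicative-at-infinity sense is false literally, but what survives after taking logarithms is exactly what we need: the weights $w_j$ only contribute bounded multiplicative constants to $\log(1+\|\xi\|)$.

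Concretely, the key step is the logarithmic collapse. From the upper estimate, $\log(1+\ell(\exp\xi))\le \log\bigl(1+C'\sum_j|x_j(\xi)|+D'\bigr)\le \log(1+\|\xi\|) + \text{const}$ for $\|\xi\|$ large (using $\sum_j|x_j(\xi)|\le n\|\xi\|$ in a suitable norm), and trivially $\log(1+\ell(\exp\xi))$ is bounded on the compact region where $\|\xi\|$ is bounded; this gives $\log(1+\ell(\exp\xi))\lesssim\log(1+\|\xi\|)$. For the reverse direction, from $|x_j(\xi)|^{1/w_j}\le C(1+\ell(\exp\xi))+D$ we get $|x_j(\xi)|\le\bigl(C(1+\ell(\exp\xi))+D\bigr)^{w_j}\le\bigl(C(1+\ell(\exp\xi))+D\bigr)^{c}$, hence $\|\xi\|\le n\bigl(C(1+\ell(\exp\xi))+D\bigr)^{c}$ in a suitable norm, so
$$
\log(1+\|\xi\|)\le c\,\log\bigl(C(1+\ell(\exp\xi))+D\bigr)+\log n\le c\,\log(1+\ell(\exp\xi)) + \text{const}\,,
$$
which yields $\log(1+\|\xi\|)\lesssim\log(1+\ell(\exp\xi))$. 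Combining the two gives the claimed equivalence.

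The main obstacle is not any deep argument but rather making sure the cited polynomial bounds are invoked in a form valid uniformly on all of $\fg$ (not merely away from the identity), and that the passage from the vector-space structure of $\fg$ to adapted polynomial coordinates is handled cleanly — in particular that the word length of $\exp\xi$ really is controlled two-sidedly by a polynomial in $\|\xi\|$ with exponent bounded by the nilpotency class $c$, both above and below. Once that polynomial sandwich $\tfrac1{c}\|\xi\|^{1/c}-\text{const}\lesssim \ell(\exp\xi)\lesssim \|\xi\|+\text{const}$ (for $\|\xi\|$ large) is in hand, applying $\log(1+\cdot)$ to all three terms and absorbing the multiplicative constant $c$ and the additive constants is routine. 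I would also note that, since by Lemma~\ref{wlfeq} all word length functions on $G$ are equivalent, it suffices to prove the estimate for one convenient choice of generating set, which is exactly the setting in which the Guivarc'h--Karydas estimates are stated.
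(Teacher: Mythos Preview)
Your proposal is correct and follows essentially the same strategy as the paper: reduce the logarithmic equivalence to a two-sided polynomial sandwich $\ell(\exp\xi)\lesssim\|\xi\|\lesssim\ell(\exp\xi)^{k}$ (with $k$ the nilpotency class), and obtain this sandwich from the Guivarc'h/Karidi estimates. The paper's own proof is simply a two-line version of what you wrote, citing \cite{Gu73} and \cite{Kar94} directly for the polynomial bound rather than passing through the weighted-coordinate formula.
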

\begin{proof}
It is sufficient to show that  $\ell(\exp\xi)\lesssim
\|\xi\|\lesssim \ell(\exp\xi)^{k}$  for some constant $k\ge 1$.
But this estimate is well known and follows, e.g., from
\cite[Lem.~II.1]{Gu73} or \cite[(4.2)]{Kar94}.
\end{proof}

Recall that  a closed subgroup $H$ with  word  length function
$\ell_0$  is said to be \emph{strictly exponentially distorted} in
a locally compact group $G$  with  word length function $\ell$ if
\begin{equation}\label{sedustde}
\log (1+\ell_0(h))\simeq \ell(h) \quad\text{on $H$} \,.
\end{equation}

\begin{proof}[Proof of Proposition~\ref{ellnorm}]
Let $\ell_0$ be a word  length function on $E$. Since the
exponential radical is strictly exponentially distorted
\cite[Th.~6.5]{Co08}, we obtain
$$
\log (1+\ell_0(g))\simeq \ell(g) \quad\text{on $E$}
\quad(\text{$g\in E$})\,.
$$
At the same time, by Lemma~\ref{loglogeq},
$$
\log(1+\ell_0(\exp\eta))\simeq \log(1+\|\eta\|)\qquad (\eta\in
\fe)\,.
$$
\end{proof}

Let $E$, $B$ and $L$ be as in Theorem~\ref{GEdecom}. Note that
$B\to B/E$  (and $G\to G/E$) can be non-split; so
Proposition~\ref{ellNH} is not applicable in the direct way. To
find a decomposition of a word length function on $G$ in the
general case we use the following trick from \cite{Co08}. Consider
a Cartan subalgebra $\fh$ in $\fb$; then, in particular, $\fh$ is
nilpotent and $\fh +\fr_\infty = \fb$ \cite[Ch.~7, \S~2, N.~1,
Cor.~3]{Bou2}. Note that  $\fh +\fe  = \fb$ and choose a
complementary subspace $\fv$ of $\fh\cap \fe  $ in~$\fh$.

The following proposition is a variant of manifold splitting.  It
can be proved as in \cite[Lem.~14.3.6]{HiNe} but succeeding proof
uses the Product Formula.

\begin{pr}\label{expVE}
The map
 $$\tau\!:\fe\times\fv\to B\!:(\eta,\xi)\mapsto \exp(\eta)\exp(\xi) $$
is a biholomorphic equivalence of complex manifolds.
\end{pr}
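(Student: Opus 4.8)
The plan is to establish that $\tau$ is a biholomorphic equivalence by exhibiting a holomorphic inverse, working entirely inside the simply connected solvable group $B$. First I would record the structural facts that make this possible: since $B$ is simply connected solvable, $\exp\!:\fb\to B$ is itself a biholomorphism, so every element of $B$ is uniquely $\exp(\zeta)$ for some $\zeta\in\fb$; moreover $E=\exp\fe$ is a simply connected (normal, by Corollary~\ref{corinsr} it is the integral subgroup of the complex ideal $\fe$) closed subgroup, hence the quotient map $\pi\!:B\to B/E$ is a holomorphic submersion of simply connected solvable groups and admits a holomorphic global section as manifolds. The subspace $\fv$ is chosen as a complement of $\fh\cap\fe$ in $\fe$... wait, that cannot be right for a splitting of $B$ over $E$; re-reading, $\fv$ is a complement of $\fh\cap\fe$ \emph{in $\fe$}, so $\exp\fv$ sits inside $E$, and the decomposition $\exp(\eta)\exp(\xi)$ with $\eta\in\fe$, $\xi\in\fv$ is a decomposition \emph{of $E$ itself} together with a reparametrization coming from $\fh+\fe=\fb$. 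So the real content is: $\fh+\fe=\fb$ with $\fh\cap\fe$ a complement to $\fv$ in $\fe$ means $\fh\oplus\fv=\fb$ as vector spaces (since $\fh+\fv=\fh+(\fh\cap\fe)+\fv=\fh+\fe=\fb$ and a dimension count gives directness), so the map should really be read as producing, from $(\eta,\xi)\in\fe\times\fv$, an element that together realizes the manifold splitting $B\cong E\times\exp\fv$ compatible with the nilpotent Cartan $\fh$. The key point to prove is therefore that $(\eta,\xi)\mapsto\exp(\eta)\exp(\xi)$ is a bijection $\fe\times\fv\to B$ and both it and its inverse are holomorphic.

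The main tool I would invoke is the \emph{Product Formula} (the Trotter/commutator product formula for Banach–Lie groups, valid since $B$ embeds via $\exp$ and the Lie algebra is nilpotent so all the relevant series are polynomial): for $\eta\in\fe$ and $\xi\in\fv$,
\[
\exp(\eta)\exp(\xi)=\exp\bigl(\eta+\xi+\tfrac12[\eta,\xi]+\cdots\bigr)=\exp\bigl(\mu(\eta,\xi)\bigr),
\]
where $\mu$ is the Baker–Campbell–Hausdorff polynomial, a \emph{polynomial} map $\fe\times\fv\to\fb$ because $\fb$ is nilpotent. So $\tau=\exp\circ\,\mu$, and since $\exp\!:\fb\to B$ is a biholomorphism, $\tau$ is biholomorphic iff $\mu\!:\fe\times\fv\to\fb$ is. Now $\mu(\eta,0)=\eta$ and $\mu(0,\xi)=\xi$, and the top-order term $\eta+\xi$ is the linear isomorphism $\fe\oplus\fv=\fb$ (using $\fb=\fh\oplus\fv$ and $\fh\subset\fe$, hence $\fe+\fv=\fb$ with $\fe\cap\fv=0$). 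The correction terms $\tfrac12[\eta,\xi]+\cdots$ all land in $[\fb,\fb]\subset\fr_\infty+\cdots$, i.e.\ in a proper ideal; filtering $\fb$ by the lower central series and observing that $\mu(\eta,\xi)-(\eta+\xi)$ raises filtration degree, one inverts $\mu$ step by step up the filtration, each step a polynomial substitution. This shows $\mu$ is a polynomial automorphism of the affine space $\fb$ with polynomial inverse, hence $\tau$ is a biholomorphic equivalence of complex manifolds.

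The step I expect to be the main obstacle is the bookkeeping that the BCH corrections $\mu(\eta,\xi)-\eta-\xi$ genuinely increase the nilpotency filtration degree \emph{with respect to the particular splitting $\fb=\fe\oplus\fv$}, i.e.\ that solving for the inverse is triangular in the right sense; here one uses that $\fe\supset\fr_\infty\supset[\fb,\fb]\cap(\text{lower terms})$, so every bracket $[\eta,\xi]$, and more generally every iterated bracket occurring in BCH, lies in $\fe$ and in a higher term of the central series, which is exactly what is needed to solve the equation $\mu(\eta,\xi)=\zeta$ recursively for $(\eta,\xi)$ given $\zeta\in\fb$. Once that filtration/triangularity argument is in place, bijectivity and holomorphy of $\tau^{-1}$ are automatic. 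An alternative, if one prefers to avoid the explicit BCH filtration argument, is to mimic \cite[Lem.~14.3.6]{HiNe}: show $d\tau$ is everywhere an isomorphism (a one-line computation of the differential using that $\fe+\fv=\fb$ directly), then argue bijectivity from simple connectedness and the fact that $\tau$ is a covering onto $B$; but the Product Formula route makes the holomorphy of the inverse most transparent.
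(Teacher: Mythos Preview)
Your main line of argument has a genuine gap. The claim that $\exp:\fb\to B$ is a biholomorphism, so that $\tau$ factors through a global BCH polynomial $\mu$, is false: $B$ is simply connected \emph{solvable}, not nilpotent, and a non-nilpotent simply connected complex solvable group is never exponential. Indeed, if some $\ad X$ has a nonzero eigenvalue $\lambda$, then $\ad\bigl((i/\lambda)X\bigr)$ has the purely imaginary eigenvalue $i$, so the real group underlying $B$ fails the Dixmier--Saito criterion; concretely, in the $2$-dimensional example~\eqref{2dimmul} one has $\exp(2\pi i\,e_1+b\,e_2)=(2\pi i,0)$ for every $b\in\CC$. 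Hence the BCH series $\mu(\eta,\xi)$ is neither polynomial nor globally defined, and your triangular inversion up the lower central series has no footing. A related slip is the line ``$\fh\subset\fe$'', which is false: the Cartan subalgebra surjects onto $\fb/\fe$. Part of your confusion comes from a typo in the paper's statement: $\fv$ is meant to be a complement of $\fh\cap\fe$ in~$\fh$, not in~$\fe$ --- inspect the paper's own proof, where $\zeta\in\fh$ is decomposed as $\nu+\xi$ with $\nu\in\fh\cap\fe$, $\xi\in\fv$, and where the projection $p:\fb\to\fb/\fe$ is required to be injective on~$\fv$. With the correct reading, $\fv\subset\fh$ and $\fb=\fe\oplus\fv$ as vector spaces.

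The paper's argument bypasses global BCH. Surjectivity comes from $B=EH$ with $H=\langle\exp\fh\rangle$ closed, after which any element of $H$ is rewritten as $g\exp(\xi)$ with $g\in E$ and $\xi\in\fv$, via the Trotter Product Formula applied to $\exp(\nu+\xi)$ together with the normality of~$E$. Injectivity follows by projecting to the nilpotent quotient $B/E$, where the exponential \emph{is} bijective. Biholomorphy is then concluded from the general fact that a holomorphic bijection $\CC^m\to\CC^m$ is automatically biholomorphic. Your alternative route via $d\tau$ is essentially the approach of \cite[Lem.~14.3.6]{HiNe} and can be made to work, but it is not the one-liner you suggest: computing $d\tau$ away from the origin involves the differential of $\exp$ at nonzero points of a non-nilpotent algebra, and the passage from ``local biholomorphism with simply connected source'' to ``covering onto $B$'' requires its own argument.
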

\begin{proof}
First, let us prove that $\tau$ is surjective. Since $B$  is
simply connected solvable, the subgroup $H\!:=\langle
\exp\fh\rangle $ is simply connected and closed
\cite[Prop.~11.2.15]{HiNe}. Further, $E$ is normal in $B$; so the
subgroup $EH$ is a subgroup in $B$.  The equality $\fh +\fe  =
\fb$ implies that $EH$ is dense in $B$. Applying
\cite[Prop.~11.2.15]{HiNe} again, we get that $EH$ is closed.
Therefore  $B=EH$.

It follows from Proposition~\ref{expe} that $E$ is simply
connected nilpotent.  The subgroup $H$ is also simply connected
nilpotent. Therefore, for any element of $B$, there is a
decomposition $\exp(\eta)\exp(\ze)$, where $\eta\in \fe$ and
$\ze\in \fh$. Write $\ze=\nu+\xi$, where $\nu\in \fh\cap \fe$ and
$\xi\in \fv$.  Then, by the Product Formula \cite[Pr.
9.2.14(1)]{HiNe},
$$
\exp(\ze)=\lim_{n\to \infty} (\exp(\nu/n)\exp(\xi/n))^n\,.
$$
The claim is that for each $n\in\N$ there is $g\in E$ s.t.
$$(\exp(\nu/n)\exp(\xi/n))^n=g\exp (\xi)\,.$$ We proceed by
induction. If $n=1$, then the claim is obvious. Suppose that it is
true for $n-1$. Set $\nu'\!:=(n-1)\nu/n$ and $\xi'\!:=(n-1)\xi/n$
and write
$$
\Bigl(\exp\frac{\nu'}{n-1}\exp\frac{\xi'}{n-1}\Bigr)^{n-1}=g'\exp
(\xi')
$$
for some $g'\in E$. Then
\begin{align*}
(\exp(\nu/n)&\exp(\xi/n))^n=(\exp(\nu/n)\exp(\xi/n))^{n-1}\exp(\nu/n)\exp(\xi/n)=\\
&(\exp(\nu'/(n-1))\exp(\xi'/(n-1)))^{n-1}\exp(\nu/n)\exp(\xi/n)=\\
&g'\exp(\xi')\exp(\nu/n)\exp(\xi/n)= g\exp (\xi)\,,
\end{align*}
where $g\!:=g'\exp(\xi')\exp(\nu/n)\exp(-\xi')$ is in $E$.  The claim is
proved.

Thus $\exp(\ze)=\lim_{n\to \infty}g_n \exp(\xi)$, where $g_n\in
E$. Since $E$ is closed,  $\exp(\ze)$ is in the range of $\tau$,
so is $\exp(\eta)\exp(\ze)$. Hence $\tau$ is surjective.

Secondly, prove that $\tau$ is injective.  Suppose  that
$\tau(\eta,\xi)=\tau(\eta',\xi')$ for some  $\eta,\eta'\in \fe$
and $\xi,\xi'\in \fv$. Let $p\!:\fb\to \fb/\fe$ and $\pi\!:B\to
B/E$ be the quotient maps. The exponential map is natural; so
$$
\exp_{B/E}(p(\xi))=
\pi\tau(\eta,\xi)=\pi\tau(\eta',\xi')=\exp_{B/E}(p(\xi'))\,.
$$
Since  $\fb/\fe$ is nilpotent, $\exp_{B/E}$ is bijective.
Therefore $p(\xi)=p(\xi')$. Finally, note that $p$ is injective on
$\mathfrak{v}$, hence $\xi=\xi'$.  Thus $\exp(\eta)=\exp(\eta')$.
Since the exponential map is natural and $E\to B$ is injective,
$\exp_E(\eta)=\exp_E(\eta')$. But  $\fe$ is nilpotent, whence
$\exp_{E}$ is bijective; so $\eta=\eta'$.

Finally,  $\fe\times\fv$ and $B$ are both $\CC^m$ for some $m$.
So we can treat $\tau$ as a holomorphic injection  $\CC^m\to
\CC^m$. Then, by \cite[Th.~8.5]{KaKa}, $\tau$ is a biholomorphic
equivalence onto the range of $\tau$, which coincides with
$\CC^m$. This concludes the proof.
\end{proof}

\begin{lm}\label{ellell1}
Let $\ell$ and $\ell_1$ be word length functions  on $G$ and
$G/E$, resp. Then $\ell(g)\simeq \ell_1(\pi(g))$ on $\exp\fv$.
\end{lm}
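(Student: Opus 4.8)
The plan is to show both inequalities $\ell(g)\lesssim \ell_1(\pi(g))$ and $\ell_1(\pi(g))\lesssim \ell(g)$ on $\exp\fv$. The second is automatic: by Lemma~\ref{GG1}(A) applied to the quotient homomorphism $\pi\!:G\to G/E$ (restricted, or rather composed with $B\hookrightarrow G$), we have $\ell_1(\pi(g))\lesssim \ell(g)$ on all of $G$, a fortiori on $\exp\fv$. So the content is the reverse estimate: the word length in $G$ of $\exp\xi$ for $\xi\in\fv$ is controlled by the word length in $B/E$ of its image. The point of choosing $\fv$ transverse to $\fh\cap\fe$ inside $\fe$ is precisely that $\exp\fv$ maps to a ``section-like'' subset of $B/E$ that carries no exponential distortion.

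First I would set up the comparison on $B/E$. By Proposition~\ref{expVE}, $\tau\!:\fe\times\fv\to B$ is biholomorphic, and composing with $\pi$ we see (using that $p\!:\fb\to\fb/\fe$ is injective on $\fv$ and $\exp_{B/E}$ is a bijection since $\fb/\fe$ is nilpotent) that $\pi(\exp\xi)=\exp_{B/E}(p(\xi))$, so $\pi|_{\exp\fv}$ is a bijection onto $B/E$. Now $B/E$ is simply connected nilpotent, hence by Corollary~\ref{PGisN} of polynomial growth, so a word length function $\ell_1$ on it satisfies $\ell_1(\exp_{B/E}(p(\xi)))\simeq \|p(\xi)\|'$ up to a polynomial — more precisely, by the standard estimates behind Lemma~\ref{loglogeq} (e.g.~\cite[Lem.~II.1]{Gu73} or \cite[(4.2)]{Kar94}), $\ell_1(\exp_{B/E}(p(\xi)))$ is bounded below and above by fixed powers of $1+\|p(\xi)\|'$ for any norm $\|\cdot\|'$ on $\fb/\fe$.

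Next I would estimate $\ell(\exp\xi)$ from above. Fix a relatively compact symmetric generating neighbourhood $U$ of $1$ in $G$; then for $\xi$ in a fixed bounded subset of $\fv$, $\exp\xi$ lies in $U^{N}$ for a fixed $N$, so $\ell(\exp\xi)$ is bounded there. For general $\xi\in\fv$, one wants to write $\exp\xi$ as a product of a bounded number — polynomial in $\|\xi\|$ — of elements of $U$, using the group structure of $B$. This is where the interaction with $\fh$ enters: since $\fh$ is a Cartan subalgebra of $\fb$ and $\fv\subset\fe\subset\fr_\infty\subset[\fb,\fb]$, the subspace $\fv$ sits inside the nilpotent part, and the subgroup $\langle\exp\fh\rangle$ acts on $E$; using commutator relations in the nilpotent group $B$ (or directly the polynomial growth of $B/E$ transported back) one obtains $\ell(\exp\xi)\lesssim (1+\|\xi\|)^{k}$ for some $k$. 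Combining with the lower bound $\ell_1(\pi(\exp\xi))=\ell_1(\exp_{B/E}(p(\xi)))\gtrsim (1+\|p(\xi)\|')^{1/k'}\gtrsim (1+\|\xi\|)^{1/k'}$ (using injectivity of $p$ on $\fv$ to compare norms) does not yet give what we need — a polynomial bound is not a linear bound. The resolution is that we are comparing two word length functions on the \emph{same} metric space $\exp\fv\cong B/E$, both quasi-isometric (as bijective quasi-isometries, by Lemma~\ref{wlfeq}) to the intrinsic word metric of the polynomial-growth group $B/E$; so in fact $\ell|_{\exp\fv}$ and $\ell_1\circ\pi|_{\exp\fv}$ are each $\simeq$ to this intrinsic metric, hence to each other.

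Therefore the cleaner argument, which I would actually write, is: apply Lemma~\ref{NXdec} to the normal subgroup $E\trianglelefteq G$ with $X=\exp\fv$, symmetric word length functions $\ell$ on $G$ and $\ell_1$ on $G/E$, and quotient map $\pi$. The hypothesis $\ell_1(\pi(g))\lesssim\ell(g)$ on $G$ is Lemma~\ref{GG1}(A). The remaining hypothesis is exactly the statement $\ell_1(\pi(x))\simeq\ell(x)$ on $X=\exp\fv$ — which is what we want — so that route is circular and I instead prove the $\simeq$ on $\exp\fv$ directly. Concretely: $\lesssim$ in one direction is Lemma~\ref{GG1}(A); for $\gtrsim$, I transport the problem to $B/E$ via Proposition~\ref{expVE}, note that $\exp\fv\cong B/E$ under $\pi$, and observe that $\ell|_{\exp\fv}$ is a length function on $\exp\fv$ which, pushed to $B/E$, is dominated by the word length function $\ell_1$ by Lemma~\ref{wlfeq}; but also, embedding $B/E\cong\exp\fv\hookrightarrow G$ is a \emph{continuous homomorphic section up to $E$} — more precisely, restricting $\ell$ to $\exp\fv$ and using that $\exp\fv$ generates $B/E$-worth of $G$ with bounded distortion because $B/E$ has polynomial growth while any distortion would force exponential growth, contradicting Proposition~\ref{exraco} (the exponential radical of $G/E$ is trivial). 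The main obstacle is precisely this last point: ruling out distortion of $\exp\fv$ inside $G$, i.e.~showing $\|\xi\|\lesssim\ell(\exp\xi)$ rather than merely $\|\xi\|\lesssim\ell(\exp\xi)^k$; this is handled by combining the triviality of the exponential radical of $G/E$ (Proposition~\ref{exraco}) with the strict-exponential-distortion characterization (equation~\eqref{sedustde}) applied to conclude that no proper quotient below $G/E$ can be exponentially distorted, together with the norm comparison $\|p(\xi)\|'\simeq\|\xi\|$ on $\fv$ from injectivity of $p|_\fv$.
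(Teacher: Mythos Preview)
Your proposal does not contain a valid proof of the hard direction $\ell(g)\lesssim\ell_1(\pi(g))$ on $\exp\fv$. You correctly isolate the easy direction via Lemma~\ref{GG1}(A) and correctly recognise that the obstacle is ruling out distortion of $\exp\fv$ inside $G$; but none of the approaches you sketch closes the gap.

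A side issue first: there is a typo in the statement of Theorem~\ref{qitiG} (and before Proposition~\ref{expVE}): $\fv$ is a complement of $\fh\cap\fe$ in~$\fh$, not in~$\fe$. A dimension count, or the proof of Proposition~\ref{expVE} (where one writes $\ze\in\fh$ as $\nu+\xi$ with $\nu\in\fh\cap\fe$, $\xi\in\fv$), makes this clear. Your reading $\fv\subset\fe$ would make the lemma vacuously false, since then $\pi(\exp\fv)=\{1\}$. With the correct $\fv\subset\fh$, the set $\exp\fv$ sits inside the simply connected nilpotent closed subgroup $H=\langle\exp\fh\rangle$.

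Your main attempt --- push $\ell|_{\exp\fv}$ forward to $B/E$ along the bijection $\pi|_{\exp\fv}$ and invoke Lemma~\ref{wlfeq} --- fails because the resulting function on $B/E$ is \emph{not} a length function: if $\pi(\exp\xi_1)\pi(\exp\xi_2)=\pi(\exp\xi_3)$ in $B/E$ with $\xi_i\in\fv$, then in $G$ one only has $\exp\xi_1\exp\xi_2=e\cdot\exp\xi_3$ for some $e\in E$, and there is no a~priori bound on $\ell(e)$. Subadditivity is precisely what is at stake, so Lemma~\ref{wlfeq} does not apply. Your fallback (``any distortion would force exponential growth, contradicting triviality of the exponential radical of $G/E$'') is not a proof either: distortion of a \emph{subset} is not the same phenomenon as existence of an exponentially distorted normal subgroup, and the exponential-radical machinery gives no direct control on $\ell|_{\exp\fv}$.

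The paper's argument, following \cite[Lem.~5.2]{Co08}, supplies the missing idea. For $g=\exp\xi\in H$, lift a minimal word $\pi(g)=\pi(s_1)\cdots\pi(s_m)$ with $s_i$ in a fixed compact symmetric generating set $S\subset H$ to $h=s_1\cdots s_m\in H$; then $\ell(h)\le m=\ell_1(\pi(g))$, and the \emph{correction} $h^{-1}g$ lies in~$E$. Strict exponential distortion of $E$ in $G$ gives $\ell(h^{-1}g)\lesssim\log(1+\ell_0(h^{-1}g))$, and since $h,g\in H$ with $H$ nilpotent, the polynomial group law together with Lemma~\ref{loglogeq} yields $\log(1+\ell_0(h^{-1}g))\lesssim\log(1+\|\xi\|)$. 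Restricting now to $\xi\in\fv$ and using $\|\xi\|\simeq\|p(\xi)\|'$ plus Lemma~\ref{loglogeq} on $B/E$ gives $\log(1+\|\xi\|)\lesssim\ell_1(\pi(g))$, whence $\ell(g)\le\ell(h)+\ell(h^{-1}g)\lesssim\ell_1(\pi(g))$. The key step you are missing is this lift-and-correct manoeuvre: it turns the discrepancy into a term living in $E$, which the exponential distortion converts into a \emph{logarithmic} error that is then absorbed into~$\ell_1$.
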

\begin{proof}
First, note  that $\ell_1(\pi(g))\lesssim \ell(g)$ on $G$ by
Lemma~\ref{GG1}(A).

To prove $\ell(g)\lesssim  \ell_1(\pi(g))$ on $\exp\fv$ we
follow~\cite[Lem.~5.2]{Co08} with small modifications. Let $\pi$
denote the projection $G\to G/E$. Fix a compact symmetric
generating subset $S$ of $H\!:=\exp\fh$ and denote by $\ell_2$
the corresponding word length function  on $H$.  We can assume
that $\ell_1$ is the word length function on $G/E$ corresponding
to~$\pi(S)$.

If $g\in H$, then write $\pi(g)$ as an element of minimal length
w.r.t. $\pi(S)$, i.e., $\pi(g)=\pi(s_1)\dots\pi(s_m)$, where
$s_1,\ldots, s_m\in S$. Set $h=s_1\cdots s_m$.  We can assume that
$\ell$ is a word length function corresponding to a compact
generating set containing $S$. Then $\ell(h)\le m=\ell_1(\pi(g))$.
As $h^{-1}g$ belongs to the exponential radical $E$, which is
strictly exponentially distorted by \cite[Th.~1.1(3)]{Os02}, we
get that $\ell(h^{-1}g)\lesssim \log(1+\ell_0(h^{-1}g))$ for $g\in
H$, where $\ell_0$ is a word length function on $E$. (Here we
consider $h$ as a function of $g$) Therefore,
\begin{equation}\label{ell10}
\ell(g)\le \ell(h)+\ell(h^{-1}g)\lesssim
\ell_1(\pi(g))+\log(1+\ell_0(h^{-1}g))\quad(g\in H)\,.
\end{equation}

Write $g=\exp \xi$ and $h=\exp \eta$ for $\xi,\eta\in \fh$.  Fix a
norm $\|\cdot\|$ on $\fb$, the Lie algebra of $B$. Since $E$ is
nilpotent, we can use Lemma~\ref{loglogeq} for the restriction
$\|\cdot\|$ on $\fe$ and $\ell_0$. Hence,
$$
\log(1+\ell_0(\exp(-\eta)\exp \xi))\simeq \log(1+\|\exp^{-1}(\exp
(-\eta)\exp \xi)\|)\,.
$$
(Here we consider $\eta$ as a function of $\xi$.) Since $H$ is
nilpotent,  its group law is given by a polynomial and we have an
upper bound
$$
\|\exp^{-1}(\exp (-\eta)\exp \xi)\|\le A(1+\|\xi\|)^k(1+\| \eta\|)^k
$$
for some constants $A,k\ge 1$. This implies
$$
\log(1+\ell_0(\exp(-\eta)\exp \xi)))\lesssim
\log(1+\|\eta\|)+\log(1+\| \xi\|)\,.
$$
Further,
$$\ell_2(h)\le m=\ell_1(\pi(g))\le  \ell(g)\le
\ell_2(g)$$
 (since $\ell_2$ is a word length function on $H$, it
dominates each length function). As $H$ is nilpotent, we can
apply Lemma~\ref{loglogeq} for the restriction $\|\cdot\|$ on
$\fh$ and $\ell_2$  and get
$$
\log(1+\|\eta\|)\lesssim\log(1+\ell_2(h))\lesssim
\log(1+\ell_2(g))\lesssim \log(1+\|\xi\|)\,.
$$
Therefore we have from \eqref{ell10} that  on $H$
$$
\ell(g)\lesssim \ell_1(\pi(g))+\log(1+\|\xi\|)\,.
$$

Now suppose that $g=\exp \xi$ for some $\xi\in \fv$. If
$\|\cdot\|'$ is a norm on $\fb/\fe$ and $p\!:\fg\to \fg/\fe$ is
the quotient map, then $\|p(\xi)\|'\simeq \|\xi\| $ on $\fv$.
Since $\exp (p(\xi))= \pi(\exp\xi)$, we have from the application
of Lemma~\ref{loglogeq} for $\|\cdot\|'$  and $\ell_1$ that
$$
\log(1+\|\xi\|)\simeq  \log(1+\|p(\xi)\|')\lesssim
\log(1+\ell_1(\pi(g)))\lesssim \ell_1(\pi(g))\,.
$$
Thus $\ell(g)\lesssim \ell_1(\pi(g))$ on $\exp\fv$.
\end{proof}

We are now in a position to prove the decomposition result.

\begin{proof}[Proof of Theorem~\ref{qitiG}]
It follows from Proposition~\ref{expVE}  that $\fe\times \fv\to
B$ and $\fe\times \fv\times L\to G$ are biholomorphic
equivalences.

Consider the quotient map $\si\!:G\to G/E$ and the length
function $\wt\ell(h,l)\!:=\ell_1(h)+\ell_2(l)$ on $B/E\times L$.
According to Theorem~\ref{GEdecom}, we can identify $G/E$ with
$B/E\times L$ and $\si$ with $\pi\times 1\!: B\rtimes L\to
B/E\times L$. Proposition~\ref{ellnorm} implies that it
sufficient to show that
\begin{equation}\label{ellwtell}
\ell(\exp(\eta)\exp(\xi)l)\simeq
\ell(\exp(\eta))+\wt\ell(\si(\exp(\xi)l))\quad \text{on
$\fe\times \fv\times L$}\,.
\end{equation}

By Lemma~\ref{ellell1}, we have $\ell(\exp(\xi))\simeq
\ell_1(\pi(\exp(\xi)))$ on~$\fv$. Besides, Proposition~\ref{ellNH}
yields that $\ell(bl)\simeq \ell(b)+\ell_2(l)$ on $B\times L$.
Combining these relations, we get $\ell(\exp(\xi)l)\simeq
\wt\ell(\si(\exp(\xi)l))$ on $\fv\times L$. On the other hand,
each length function is dominated by a word length function, so
$\wt\ell(\si(g))\lesssim \ell(g)$ on~$G$. Thus both conditions of
Lemma~\ref{NXdec} are satisfied for $\ell$ and $\wt\ell$ with
$X=(\exp\fv)L$; so application of this lemma completes the proof
of~\eqref{ellwtell}.
\end{proof}

\section{Holomorphic functions of exponential type}\label{HFET}
Recall that a \emph{submultiplicative weight} on a locally
compact group $G$ is a non-negative locally bounded function
$\om\!: G \to \R$ s.t.
$$
\om(gh)\le \om(g)\om(h)\qquad (g, h \in G)\,.
$$
Akbarov proposed the term 'semicharacter' in \cite{Ak08} but we
follow \cite{ArAMN}.

A~holomorphic function $f$ on a complex Lie group $G$ is said to
be of \emph{exponential type}, if there is a submultiplicative
weight $\om$ satisfying  $|f(g)|\le \om(g)$ for all $g\in G $. The
linear space (in fact, a locally convex algebra and even a
topological Hopf algebra) of all holomorphic function $f$ of
exponential type on $G$ is denoted by $\cO_{exp}(G)$
\cite[Sect.~5.3.1]{Ak08}.

Consider the Fr\'{e}chet space $\cO(G)$ of holomorphic functions
on a complex Lie group $G$ and its strong dual space
$\cA(G)\!:=\cO(G)'$ endowed with the convolution multiplication.
In fact, $\cA(G)$ is a $\Ptens$-algebra w.r.t. the convolution,
i.e., it is a complete Hausdorff locally convex topological
algebra with jointly continuous multiplication; it is called the
\emph{algebra of analytic functionals} on $G$ \cite{Lit}.

If $A$ is a unital Banach algebra and $\pi\!:G\to \GL(A)$ is a
holomorphic homomorphism, then $\pi$ uniquely extends to a unital
continuous homomorphism $\bar\pi\!: {\cA}(G)\to A$ s.t.
\begin{equation}\label{pixg2}
\langle x, \bar\pi(a')\rangle =\langle a', \pi_x\rangle\qquad(a'
\in {\cA}(G),\,x \in A'),
\end{equation}
where  $\pi_x\in \mathcal{O}(G)$ is defined by
\begin{equation}\label{pixg4}
\pi_x(g)\!:= \langle x, \pi(g)\rangle\qquad(g\in G).
\end{equation}
On the other hand, for a unital continuous homomorphism
$\bar\pi\!: {\cA}(G)\to A$,
\begin{equation}\label{pixg3}
\pi(g)\!:= \bar\pi(\de_g)
 \qquad (g\in G)
\end{equation}
defines a holomorphic homomorphism $\pi\!:G\to \GL(A)$,
satisfying~(\ref{pixg2}) \cite{Lit}.

\begin{pr}\label{exthomo}
Let $f$ be a function  be a complex Lie group $G$. T.F.A.E.

\emph{(1)} $f\in \mathcal{O}_{exp}(G)$.

\emph{(2)} There exist a unital Banach algebra $A$, a  holomorphic
homomorphism $\pi\!:G\to \GL(A)$, and $x\in A'$ s.t.
\begin{equation}\label{fgzpig}
f(g)\!:= \langle x, \pi(g)\rangle \qquad(g\in G).
\end{equation}

\emph{(3)} $f$  is a coefficient of a holomorphic representation
in some Banach space.
\end{pr}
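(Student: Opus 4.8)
The plan is to prove the cycle of implications $(3)\Rightarrow(2)\Rightarrow(1)\Rightarrow(3)$, exploiting the correspondence between holomorphic homomorphisms $G\to\GL(A)$ and unital continuous homomorphisms $\cA(G)\to A$ recalled in \eqref{pixg2}--\eqref{pixg3}. The equivalence $(2)\Leftrightarrow(3)$ is essentially a matter of unwinding definitions: a coefficient of a holomorphic representation $\rho$ of $G$ on a Banach space $X$ is a function $g\mapsto\langle\xi',\rho(g)\xi\rangle$ with $\xi\in X$, $\xi'\in X'$; taking $A=\mathcal B(X)$ (or the closed subalgebra generated by $\rho(G)$), $\pi=\rho$, and $x\in A'$ the functional $T\mapsto\langle\xi',T\xi\rangle$ gives \eqref{fgzpig}. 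Conversely, given data as in (2), left translation turns $A$ into a Banach $G$-module via $g\mapsto L_{\pi(g)}$, which is a holomorphic representation, and $f$ is the coefficient attached to the vectors $1\in A$ and $x\in A'$.

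For $(2)\Rightarrow(1)$, I would show directly that any $f$ of the form \eqref{fgzpig} is majorized by a submultiplicative weight. Put $\om(g)\!:=\|x\|\,\|\pi(g)\|$; then submultiplicativity of the operator norm on $A$ and the homomorphism property of $\pi$ give $\om(gh)=\|x\|\,\|\pi(g)\pi(h)\|\le\|x\|\,\|\pi(g)\|\,\|\pi(h)\|\le\om(g)\om(h)$, while $|f(g)|=|\langle x,\pi(g)\rangle|\le\|x\|\,\|\pi(g)\|=\om(g)$. Local boundedness of $\om$ follows because $g\mapsto\pi(g)$ is holomorphic, hence norm-continuous, so $f\in\cO_{exp}(G)$.

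The substantive implication is $(1)\Rightarrow(2)$ (equivalently $(1)\Rightarrow(3)$), and that is where I expect the main obstacle. Starting from $f\in\cO_{exp}(G)$, choose a submultiplicative weight $\om$ with $|f|\le\om$; one may assume $\om$ is continuous and $\om\ge 1$. The idea is to build a Banach algebra completion of $\cA(G)$ on which $f$ becomes a bounded functional realised as in \eqref{fgzpig}. Concretely, consider the weighted space of analytic functionals (or of measures/$\de$-combinations) with the norm controlled by $\om$, complete it to a Banach algebra $A_\om$, let $\pi(g)=\de_g$ be the canonical holomorphic homomorphism $G\to\GL(A_\om)$, and check that $f$ extends to a continuous functional $x$ on $A_\om$ with $\langle x,\de_g\rangle=f(g)$. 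The delicate points are: verifying that the convolution is jointly continuous for the $\om$-weighted norm so that $A_\om$ really is a Banach algebra; that the natural map $G\to\GL(A_\om)$ is holomorphic (not merely continuous) — here one uses that $G\to\cA(G)$, $g\mapsto\de_g$, is holomorphic and that the completion map is continuous; and that the functional induced by $f$ on the dense subspace spanned by the $\de_g$ is bounded for the weighted norm, which is exactly the estimate $|f(g)|\le\om(g)$ together with its consequences on finite combinations. An alternative, likely cleaner route is to invoke the construction of $\cO_{exp}(G)$ as an inductive limit of Banach algebras in \cite[Sect.~5.3.1]{Ak08}: $f$ lies in one of the Banach algebras $A$ of the system, and the inclusion $A\hookrightarrow\cO(G)$ dualises to a holomorphic homomorphism $G\to\GL(A')$ or $G\to\GL(A)$ whose coefficient recovers $f$. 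Either way, the heart of the argument is packaging the single scalar inequality $|f|\le\om$ into a genuine Banach-algebraic representation, and that bookkeeping — not any deep structural input — is the main thing to get right.
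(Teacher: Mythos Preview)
Your approach is essentially the same as the paper's. For $(1)\Rightarrow(2)$ the paper streamlines your direct construction by invoking the fact that $\nu\!:\cA(G)\to\cA_{exp}(G)$ is an Arens--Michael envelope \cite[Th.~5.2]{Ak08}: since $f\in\cO_{exp}(G)=\cA_{exp}(G)'$, there is a continuous submultiplicative prenorm $\|\cdot\|$ on $\cA(G)$ with $|\langle f,\nu(a')\rangle|\le C\|a'\|$, and the completion of $\cA(G)$ with respect to $\|\cdot\|$ is the desired Banach algebra $A$, with $\pi(g)=\bar\pi(\de_g)$ as in \eqref{pixg3}. This is precisely your $A_\om$, but the bookkeeping you flag (submultiplicativity of the weighted norm for convolution, holomorphy of $g\mapsto\de_g$ in the completion) is absorbed into the cited result rather than re-argued.

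Two minor corrections. In $(2)\Rightarrow(1)$, your weight $\om(g)=\|x\|\,\|\pi(g)\|$ satisfies $\om(gh)\le\|x\|\,\|\pi(g)\|\,\|\pi(h)\|$, which is $\le\om(g)\om(h)=\|x\|^2\,\|\pi(g)\|\,\|\pi(h)\|$ only when $\|x\|\ge 1$; the paper uses $\max\{\|x\|,1\}\,\|\pi(g)\|$ to avoid this. And your ``alternative, cleaner route'' is slightly misstated: the spaces $\cO_\om(G)$ in the inductive limit are Banach \emph{spaces}, not Banach algebras under pointwise multiplication (that would require $\om^2\lesssim\om$), so one cannot take $A$ to be one of them directly; the Banach-algebra completion has to be performed on the $\cA$-side, exactly as in your primary plan and in the paper.
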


Recall that an \emph{Arens-Michael envelope} of a $\Ptens$-algebra
$A$ is  a pair $(\widehat A, \io_A)$, where $\widehat A$ is an
Arens-Michael algebra  and $\io_A$ is a continuous homomorphism $A
\to \widehat A$ s.t. for any Arens-Michael algebra $B$ and for
each continuous homomorphism $\phi\!: A \to B$ there exists a
unique continuous homomorphism $\widehat\phi\!:\widehat A \to B$
with $\phi=\wh\phi\io_A$ \cite[Chap.~5]{X2}.

\begin{proof}
(1)$\Rightarrow$(2). Suppose that $f\in \mathcal{O}_{exp}(G)$. Put
$\cA_{exp}(G)\!:=\mathcal{O}_{exp}(G)'$ and consider $f$ as a
functional on $\cA_{exp}(G)$. The natural map $\nu\!:\cA(G)\to
\cA_{exp}(G)$ is an Arens-Michael envelope. (This result is proved
in \cite[Th.~5.2]{Ak08} for Stein groups but  the argument in the
general case is similar.) Therefore there exists a continuous
submultiplicative prenorm $\|\cdot\|$ on $\cA(G)$ and $C>0$ s.t.
$|\langle f, \nu(a')\rangle| \le C\, \|a'\|$  for all
$a'\in\cA(G)$. Denote by $A$ the Banach algebra that is the
completion of $\cA(G)$ w.r.t. $\|\cdot\|$ and by $\bar\pi$ the
corresponding continuous homomorphism $\cA(G)\to A$. Then the
functional $f$ factors on some $x\in A'$ s.t. $\langle f,
\nu(a')\rangle=\langle x, \bar\pi(a')\rangle$ for all   $a'$.
Consider a  holomorphic homomorphism $\pi\!:G\to \GL(A)$  defined
by (\ref{pixg3}). Since $\langle f, \de_g\rangle=f(g)$, we get
$f(g)=\langle x, \bar\pi(\de_g)\rangle=\langle x, \pi(g)\rangle$.

(2)$\Rightarrow$(1). Suppose that  $\pi\!:G\to \GL(A)$ is a
holomorphic homomorphism  and $x\in A'$. Any function $f$ of the
form~(\ref{fgzpig}), being a composition of a holomorphic and
linear map,  is holomorphic.  Obviously, the function $g\mapsto
\|\pi(g)\|$, where $\|\cdot\|$ is the norm on $A$, is
submultiplicative and continuous; therefore it is a
submultiplicative weight; moreover,
$$
g\mapsto \max\{\|x\|,\,1\}\, \|\pi(g)\|
$$
is a submultiplicative weight. Since $|f(g)|\le \|x\|\,\|\pi(g)\|$
for all $g$, we have that $f$ is of exponential type.

(2)$\Leftrightarrow$(3). It is sufficient to note that  a function
of the form~(\ref{fgzpig}) is a coefficient of the representation
that is a composition of $\pi$ and the regular representation $A$
on itself.
\end{proof}

Recall that $\cO_{exp}(G)$ is endowed with an inductive locally convex
topology via  identification
$$
\cO_{exp}(G)=\varinjlim_{\om} \cO_\om(G)\,,
$$
where $\om$ runs all submultiplicative weights on $G$ and
$\cO_\om(G)$ is a Banach space defined by
\begin{equation} \label{cOom}
\cO_\om(G)\!:=\Bigl\{ f\in\cO(G) \!: |f|_\om\!:=\sup_{g\in
G}{\om(g)}^{-1}{|f(g)|}<\infty\Bigr\}\,.
\end{equation}
Note that this definition can be applied also for any complex
manifold and any locally bounded function  with values in
$[1,+\infty)$.

It is proved in \cite[Th.~4.5]{Ak08} that $\cO_{exp}(G)$ is a
projective stereotype algebra (at least, for a compactly generated
Stein group $G$). But $\cO_{exp}(G)$ is also an algebra in more
traditional category of functional analysis as it is seen from the
following lemma.

\begin{lm}\label{Oexpptnal}
If $G$ is a complex Lie group, then $\cO_{exp}(G)$ is a
$\Ptens$-algebra w.r.t. the point-wise multiplication.
\end{lm}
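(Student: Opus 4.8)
The plan is to exploit the presentation $\cO_{exp}(G)=\varinjlim_\om\cO_\om(G)$ from \eqref{cOom} as a locally convex inductive limit of Banach spaces carrying a compatible multiplicative structure, and to read off from it the three properties making $\cO_{exp}(G)$ a $\Ptens$-algebra. The algebraic part is immediate: if $f_j\in\cO_{\om_j}(G)$ for $j=1,2$, then $\om_1\om_2$ is again a submultiplicative weight and $|f_1f_2|\le\om_1\om_2$, so $\cO_{exp}(G)=\bigcup_\om\cO_\om(G)$ is a subalgebra of $\cO(G)$; the index set is directed because $\om,\om'\le\om\om'$, and we may assume every weight is $\ge1$, replacing $\om$ by the still submultiplicative weight $\max(\om,1)$. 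The sharper point is the contraction estimate $|f_1f_2|_{\om_1\om_2}\le|f_1|_{\om_1}\,|f_2|_{\om_2}$: it says that multiplication restricts to contractive bilinear maps $\cO_{\om_1}(G)\times\cO_{\om_2}(G)\to\cO_{\om_1\om_2}(G)$. Composing with the canonical maps into the inductive limit, $f\mapsto fg$ is continuous $\cO_\om(G)\to\cO_{exp}(G)$ for each weight $\om$ and each fixed $g$, so multiplication on $\cO_{exp}(G)$ is at least separately continuous, and joint continuity reduces to continuity at $(0,0)$.

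For Hausdorffness, each submultiplicative weight is bounded on compacta, so $|\cdot|_\om$ dominates every compact-open seminorm; hence each inclusion $\cO_\om(G)\hookrightarrow\cO(G)$ is continuous, the canonical map $\cO_{exp}(G)\to\cO(G)$ is a continuous injection into a Hausdorff space, and $\cO_{exp}(G)$ is Hausdorff. For completeness, the point is that the inductive system is, up to cofinality, of Silva type. Fix a proper submultiplicative weight $\tau\ge1$ on $G$; such $\tau$ exists because $G$, being a Lie group, is $\sigma$-compact (take $\tau=e^{d(e,\cdot)}$ for a proper left-invariant metric $d$). Then $\{\om\tau:\om\text{ a weight}\}$ is cofinal among weights, and each linking map $\cO_\om(G)\to\cO_{\om\tau}(G)$ is compact: the unit ball $B_\om=\{f\in\cO(G):|f|\le\om\}$ is uniformly $\varepsilon$-small in $|\cdot|_{\om\tau}$ outside the compact set $\{\tau\le1/\varepsilon\}$, while on a compact neighbourhood of that set it is a normal family by Montel's theorem, so $B_\om$ is totally bounded in $\cO_{\om\tau}(G)$. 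A regular inductive limit of Banach spaces with compact linking maps is complete, which gives completeness of $\cO_{exp}(G)$.

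It remains to establish joint continuity of the product at $(0,0)$. When $G$ is compactly generated — in particular whenever $G$ is connected, which is the case relevant for the rest of the article — Lemma~\ref{wlfeq} shows that every submultiplicative weight is dominated by $e^{n\ell}$ for some $n\in\N$, where $\ell$ is a fixed word length function; hence $\cO_{exp}(G)=\varinjlim_n\cO_{e^{n\ell}}(G)$ is a \emph{countable} inductive limit of Banach spaces. For such (LF)-spaces one has $\cO_{exp}(G)\times\cO_{exp}(G)=\varinjlim_{n,m}\bigl(\cO_{e^{n\ell}}(G)\times\cO_{e^{m\ell}}(G)\bigr)$ as topological vector spaces, so the step-wise contractive maps $\cO_{e^{n\ell}}(G)\times\cO_{e^{m\ell}}(G)\to\cO_{e^{(n+m)\ell}}(G)$ assemble into a jointly continuous multiplication, and (LF)-spaces are complete, so no separate completeness argument is needed in this case. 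For a general complex Lie group the same conclusion should follow from the regularity and relative compactness of bounded sets established above: a net tending to $0$ in $\cO_{exp}(G)$ is bounded, hence its closure lies in a single Banach space $\cO_{\om\tau}(G)$ and is compact there, so the contraction estimate can be applied on it. I expect this last step — promoting the evident step-wise continuity of multiplication to genuine joint continuity on the full, possibly uncountable, inductive limit — to be the only genuinely delicate point; everything else is bookkeeping.
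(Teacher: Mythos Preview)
Your argument is sound in the compactly generated case, but the general-case sketch for joint continuity has a genuine flaw, and both your completeness and continuity arguments take a different route from the paper's.

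For \emph{completeness}, the paper simply quotes \cite[Pr.~2.12]{ArAMN}: $\cO_{exp}(G)$ is the strong dual of the Fr\'echet space $\cA_{exp}(G)$, and strong duals of Fr\'echet spaces are complete. Your Silva-type argument via compact linking maps is more hands-on and independently interesting, but it relies on $\sigma$-compactness of~$G$ to produce a proper weight~$\tau$.

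For \emph{joint continuity}, the paper avoids all the (LF)- and Silva-structure you invoke by a single elementary observation: \emph{if $\om$ is a submultiplicative weight, then so is $\om^{1/2}$}. Given an absolutely convex $0$-neighbourhood $U$, pick for each weight~$\om$ a constant $C_\om>0$ with $U_\om\!:=\{|f|<C_\om\om\}\subset U\cap\cO_\om(G)$, and set $V_\om\!:=\{|f|<C_\om^{1/2}\om^{1/2}\}$. Then $\bigcup_\om V_\om$ is a $0$-neighbourhood in the inductive topology, and $f_1,f_2\in V_\om$ gives $f_1f_2\in U_\om\subset U$. No countability, regularity, or cofinal Silva system is needed; the argument works for an arbitrary complex Lie group in one stroke.

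Your proposed fix for non-compactly-generated $G$ does not work as stated: a convergent \emph{net} in a locally convex space need not be bounded (only convergent sequences are), so you cannot trap it in a single step $\cO_{\om\tau}(G)$; and regularity of the inductive limit has only been established under your $\sigma$-compactness hypothesis. The square-root trick above is exactly the missing idea that closes this gap uniformly.
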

\begin{proof}
Since a product of two submultiplicative weights is a
submultiplicative weight, $\cO_{exp}(G)$ is an algebra.  Since
the strong dual of  Fr\'echet space is complete and
$\cO_{exp}(G)$ is the strong dual of the Fr\'echet space
$\cA_{exp}(G)$ \cite[Pr.~2.12]{ArAMN}, we get that $\cO_{exp}(G)$
is complete.

It remains to show  that the multiplication is jointly
continuous. Note that $\cO_{exp}(G)$  is endowed with the
inductive topology, i.e., the family of all absolutely convex
subsets $U$ in $\cO_{exp}(G)$  s.t. $U\cap \cO_\om(G)$ is open
for each submultiplicative weight $\om$ on $G$ is a base of
neighbourhoods of $0$.

Let $U$ be an open subset in $\cO_{exp}(G)$. Then for each
submultiplicative weight $\om$ there is $C_\om\ge0$ s.t.
$U_\om\!:=\{|f(g)|<C_\om \om(g)\,\forall \,g\in G\}$ is contained
in $U\cap \cO_\om(G)$. It is easy to see that $\bigcup_\om U_\om$
is an open subset in  $\cO_{exp}(G)$ and contained in $U$. For
each submultiplicative weight $\om$, the function
$g\mapsto\om(g)^{1/2}$ is also a submultiplicative weight. Set
$V_\om\!:=\{|f(g)|<C_\om^{1/2} \om(g)^{1/2}\,\forall \,g\in G\}$;
then  $\bigcup_\om V_\om$ is  open in  $\cO_{exp}(G)$. It is
obvious that $f_1,f_2\in V_\om$ implies $f_1f_2\in U_\om$. Thus
the multiplication is jointly continuous.
\end{proof}

It is not hard to see that any holomorphic homomorphism
$\phi\!:G\to H$  of complex Lie group induces a  $\Ptens$-algebra
homomorphism $\widetilde\phi\!:\cO_{exp}(H)\to \cO_{exp}(G)$
given by $[\widetilde\phi(f)](g)\!:=f(\phi(g))$.  Since the dual
map $\wt\phi'\!:\cA_{exp}(G)\to \cA_{exp}(H)$ coincides with the
image of  the homomorphism $\cA(G)\to \cA(H)$ under the
Arens-Michael envelope functor, $\wt\phi'$ is also a
$\Ptens$-algebra homomorphism.

Now we can prove that it suffice to study holomorphic functions
of exponential type only on linear groups.

\begin{thm}\label{redliexpf}
Let $G$ be a connected complex Lie group and let $\si\!:G\to
G/\LinC(G)$ be the quotient homomorphism. Then

{\em (A)}  $\wt\si'\!:\cA_{exp}(G)\to \cA_{exp}(G/\LinC(G))  $ is
a  $\Ptens$-algebra isomorphism.

{\em (B)}  $\wt\si\!:\cO_{exp}(G/\LinC(G))\to \cO_{exp}(G) $ is
a  $\Ptens$-algebra isomorphism.

{\em (C)}   Each functions in  $\cO_{exp}(G)$ is constant on
cosets of $\LinC(G)$.
\end{thm}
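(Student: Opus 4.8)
\textbf{Proof proposal for Theorem~\ref{redliexpf}.}

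The plan is to reduce everything to the key fact that a holomorphic function of exponential type factors through a holomorphic homomorphism to the invertibles of a Banach algebra (Proposition~\ref{exthomo}, (1)$\Leftrightarrow$(2)), together with Theorem~\ref{holhomline}, which characterizes $\LinC(G)$ as the intersection of kernels of all such homomorphisms. First I would establish~(C): given $f\in\cO_{exp}(G)$, write $f(g)=\langle x,\pi(g)\rangle$ for a holomorphic homomorphism $\pi\colon G\to\GL(A)$ and $x\in A'$, as in Proposition~\ref{exthomo}. By Theorem~\ref{holhomline}, $\LinC(G)\subset\Ker\pi$, so $\pi$ is constant (equal to $1$) on each coset of $\LinC(G)$; hence so is $f$. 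This is the conceptual heart of the theorem, and it is short once Proposition~\ref{exthomo} and Theorem~\ref{holhomline} are in hand.

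Next I would deduce~(B) from~(C). The map $\wt\si\colon\cO_{exp}(G/\LinC(G))\to\cO_{exp}(G)$ is a $\Ptens$-algebra homomorphism by the general remarks preceding the theorem (pullback along the holomorphic homomorphism $\si$). It is injective because $\si$ is surjective (so $\wt\si(f)=0$ forces $f=0$). For surjectivity: given $f\in\cO_{exp}(G)$, part~(C) shows $f$ descends to a well-defined function $\bar f$ on $G/\LinC(G)$ with $f=\bar f\circ\si$; one checks $\bar f$ is holomorphic (since $\si$ is a holomorphic submersion onto a complex Lie group, a function on the quotient is holomorphic iff its pullback is) and of exponential type (if $\om$ is a submultiplicative weight on $G$ majorizing $f$, then $\bar\om(g\LinC(G)):=\inf\{\om(gz):z\in\LinC(G)\}$—or more simply the weight coming from the Banach-algebra representation, which is already constant on cosets—majorizes $\bar f$; alternatively, factor the representation $\pi$ through $G/\LinC(G)$ and apply Proposition~\ref{exthomo} in the other direction). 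Thus $\wt\si$ is a bijective $\Ptens$-algebra homomorphism; since $\cO_{exp}$ of a complex Lie group is the strong dual of a Fréchet space, a bijective continuous linear map between such spaces is a topological isomorphism (open mapping theorem for such spaces), giving~(B).

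Finally, part~(A) follows by duality from~(B): the dual map of the topological algebra isomorphism $\wt\si$ is $\wt\si'\colon\cA_{exp}(G)\to\cA_{exp}(G/\LinC(G))$, and transposing a topological isomorphism of strong duals of Fréchet spaces yields a topological isomorphism again (the spaces are reflexive in the relevant sense, being stereotype). That $\wt\si'$ is multiplicative is already recorded in the remarks preceding the theorem (it is the image of $\cA(G)\to\cA(G/\LinC(G))$ under the Arens--Michael envelope functor). One could alternatively prove~(A) directly and deduce~(B) from it; I would present~(C)$\Rightarrow$(B)$\Rightarrow$(A) as above since~(C) is the substantive statement.

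\textbf{Main obstacle.} The only genuinely delicate point is the surjectivity in~(B)—specifically, verifying that the descended function $\bar f$ is again of \emph{exponential type} on $G/\LinC(G)$, not merely holomorphic. The clean way around this is not to manipulate weights on $G$ at all, but to observe that the Banach-algebra representation $\pi$ furnished by Proposition~\ref{exthomo} kills $\LinC(G)$ (by Theorem~\ref{holhomline}), hence factors as $\pi=\bar\pi\circ\si$ for a holomorphic homomorphism $\bar\pi\colon G/\LinC(G)\to\GL(A)$; then $\bar f(h)=\langle x,\bar\pi(h)\rangle$ is of exponential type on $G/\LinC(G)$ by Proposition~\ref{exthomo} again. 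Care is needed that $\bar\pi$ is holomorphic as a map on the quotient manifold, which follows because $\si$ is a holomorphic surjective submersion with connected fibers, so holomorphicity descends. Everything else is routine bookkeeping with the open mapping theorem and the duality between $\cO_{exp}$ and $\cA_{exp}$.
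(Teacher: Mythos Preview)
Your argument is correct, but it runs in the opposite order from the paper's: the paper proves (A) first, then obtains (B) by taking the strong dual of $\wt\si'$, and finally reads off (C) from (B). The paper's route to (A) is quite short: since by Theorem~\ref{holhomline} every holomorphic homomorphism $G\to\GL(A)$ factors through $\si$, and by~\eqref{pixg2}--\eqref{pixg3} such homomorphisms correspond bijectively to continuous unital homomorphisms $\cA(G)\to A$, every continuous homomorphism from $\cA(G)$ to a Banach algebra factors through $\cA(G)\to\cA(G/\LinC(G))$. Because $\cA_{exp}$ is the Arens--Michael envelope of $\cA$ on both sides, the universal property immediately yields that $\wt\si'$ is a topological isomorphism---no open mapping theorem, no manual verification that the descended function is of exponential type.

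Your route (C)$\Rightarrow$(B)$\Rightarrow$(A) is perfectly valid and has the virtue of putting the conceptual content (C) front and center, but it costs you two extra checks: that $\bar f$ is again of exponential type (which you handle correctly by factoring the Banach-algebra representation through the quotient), and that the continuous bijection $\wt\si$ is open. For the latter, note that the classical open mapping theorem is for Fr\'echet spaces, whereas $\cO_{exp}$ is only the strong dual of a (nuclear) Fr\'echet space; you need De~Wilde's closed graph/open mapping theorem (these spaces are webbed and ultrabornological) or the reflexivity of nuclear Fr\'echet spaces to transpose back and forth. This is fine, but it is exactly the bookkeeping the paper's Arens--Michael argument sidesteps.
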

\begin{proof}
(A) It follows from Theorem~\ref{holhomline} that each holomorphic
homomorphism $G\to\GL(A)$ factors on~$\si$.  On the other hand,
\eqref{pixg2} and \eqref{pixg3} give a bijection between the set
of  holomorphic homomorphisms from~$G$ to $\GL(A)$ and the set of
unital continuous homomorphisms from $\cA(G)$ to~$A$. The same, of
course, is true for  $G/\LinC(G)$. Therefore each unital
continuous homomorphism $\cA(G)\to  A$, where $A$~is a unital
Banach algebra, factors  on $\cA(G)\to   \cA(G/\LinC(G))$. Since
$\cA(G)\to \cA_{exp}(G)$ and $\cA(G/\LinC(G))\to
\cA_{exp}(G/\LinC(G))$ are Arens-Michael envelopes
\cite[Th.~6.2]{Ak08}, we see $\wt\si'$ is a topological
isomorphism.

(B) It is sufficient to note that the strong dual of $\wt\si'$
coincides with $\si$.

Part~(C) follows (B) immediately.
\end{proof}

\begin{rem}
Nevertheless, even for simplest examples there are functions of
exponential type that are not coefficients of finite-dimensional
holomorphic representations. For example, if $G=\CC$, then any
finite-dimensional holomorphic representation has the form
$z\mapsto\exp(zT)$ for some complex matrix $T$. It is nor hard to
see from the Jordan decomposition of $T$ that all matrix
coefficients belongs to the algebra generated by $z$ and
$\{e^{\la z}:\,\la\in\CC\}$. This algebra is smaller than
$\cO_{exp}(\CC)$.
\end{rem}

For a complex manifold $M$ and   a locally bounded function
$\up\!:M\to [1,+\infty)$, denote by  $V_\up$  the closure of
$$
\acv\{\up(x)^{-1}\de_x:\,x\in M\}
$$
in $\cA(M)\!:=\cO(M)'$, where $\acv$ denotes the absolutely convex
hull.  Let $\|\cdot\|_{\up}$ be the Minkowski functional of
$V_\up$ and  ${\cA}_\up(M)$  the completion of ${\cA}(M)$ w.r.t.
$\|\cdot\|_{\up}$. Also, denote by ${\cA}_{\up^\infty}(M)$ the
completion of ${\cA}(M)$ w.r.t. the sequence of prenorms
$(\|\cdot\|_{\up^n};\,n\in\N)$, where $\up^n(x)\!:=\up(x)^n$.

Also, we put $\cO_{\up^\infty}(M)\!:=\bigcup_{n\in\N}
\cO_{\up^n}(M)$ and consider $\cO_{\up^\infty}(M)$ with the
inductive limit topology.

We denote by $E\Ptens F$ the complete projective tensor product
of locally convex spaces $E$ and $F$.

\begin{pr} \label{Aom12}
Let $M_1$ and $M_2$ be complex manifolds and let $\up_1\!:M_1\to
[1,+\infty)$ and $\up_2\!:M_2\to [1,+\infty)$ be locally bounded
functions. Set $\up(x_1,x_2)\!:=\up_1(x_1)\up_2(x_2)$.

\emph{(A)} Then the natural map
$\rho\!:\cA(M_1)\otimes\cA(M_2)\to\cA(M_1\times M_2)$ induces the
topological isomorphism of Banach spaces
$$
\cA_{\up_1}(M_1)\Ptens \cA_{\up_2}(M_2)\cong\cA_{\up}(M_1\times
M_2) \,.
$$
and the topological isomorphism of Fr\'echet spaces
$$
\cA_{\up_1^\infty}(M_1)\Ptens
\cA_{\up_2^\infty}(M_2)\cong\cA_{\up^\infty}(M_1\times M_2) \,.
$$

\emph{(B)} If, in addition, each $\cA_{\up_i^\infty}(M_i)$ is
nuclear  ($i=1,2$), then the natural map
$\cO(M_1)\otimes\cO(M_2)\to\cO(M_1\times M_2)$ induces the
topological isomorphism of locally convex spaces
$$
\cO_{\up_1^\infty}(M_1)\Ptens \cO_{\up_2^\infty}(M_2) \cong
\cO_{\up^\infty}(M_1\times M_2)\,.
$$
\end{pr}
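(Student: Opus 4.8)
The plan is to prove Part~(A) first and then deduce Part~(B) from it. For Part~(A), the key point is to identify, under the natural map $\rho\!:\cA(M_1)\otimes\cA(M_2)\to\cA(M_1\times M_2)$, the set $\acv\{\up(x_1,x_2)^{-1}\de_{(x_1,x_2)}\}$ with (the absolutely convex hull of) the set of tensors $\up_1(x_1)^{-1}\de_{x_1}\otimes\up_2(x_2)^{-1}\de_{x_2}$, using that $\de_{(x_1,x_2)}=\de_{x_1}\otimes\de_{x_2}$ and $\up(x_1,x_2)=\up_1(x_1)\up_2(x_2)$. First I would recall the standard description of the projective tensor norm: the unit ball of $\cA_{\up_1}(M_1)\Ptens\cA_{\up_2}(M_2)$ is the closed absolutely convex hull of $V_{\up_1}\otimes V_{\up_2}$, and by a theorem of Grothendieck the projective tensor product of the completions equals the completion of the algebraic tensor product in the $\pi$-norm. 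So it suffices to check that the Minkowski functional of $\ol{\acv}(V_{\up_1}\otimes V_{\up_2})$ coincides, after applying $\rho$, with $\|\cdot\|_{\up}$ on $\cA(M_1\times M_2)$; equivalently that $\rho$ maps $\ol{\acv}(V_{\up_1}\otimes V_{\up_2})$ onto a dense subset of $V_{\up}$ and is isometric. Density of $\rho(\cA(M_1)\otimes\cA(M_2))$ in $\cA(M_1\times M_2)$ is the standard fact that $\cO(M_1)\Ptens\cO(M_2)\cong\cO(M_1\times M_2)$ (nuclearity of $\cO$ of a complex manifold), dualized. The isometry is the content of the set-level identification above: one inclusion is immediate since $\up_1(x_1)^{-1}\de_{x_1}\otimes\up_2(x_2)^{-1}\de_{x_2}$ is a point of $V_{\up_1}\otimes V_{\up_2}$ mapping to $\up(x_1,x_2)^{-1}\de_{(x_1,x_2)}$; the reverse inclusion, that $V_{\up_1}\otimes V_{\up_2}$ is contained in the closed absolutely convex hull of $\{\up(x_1,x_2)^{-1}\de_{x_1}\otimes\de_{x_2}\}$, follows by approximating elements of $V_{\up_i}$ by absolutely convex combinations of the $\up_i(x_i)^{-1}\de_{x_i}$ and passing to the tensor product. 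The Fréchet-space statement follows by taking the projective limit over $n$ of the Banach-space isomorphisms for $\up^n=\up_1^n\up_2^n$, using that $(\|\cdot\|_{\up^n})$ is an increasing (up to equivalence) sequence of prenorms and that $\Ptens$ commutes with countable projective limits for Fréchet spaces.

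For Part~(B), the idea is to dualize. By construction $\cO_{\up^\infty}(M)=\bigcup_n\cO_{\up^n}(M)$ with the inductive topology, and each $\cO_{\up^n}(M)$ is, by definition \eqref{cOom} together with the relation between $V_\up$ and $\cO_\up$, the strong dual of the Banach space $\cA_{\up^n}(M)$; hence $\cO_{\up^\infty}(M)=\varinjlim_n\cA_{\up^n}(M)'$ is the strong dual of the Fréchet space $\cA_{\up^\infty}(M)=\varprojlim_n\cA_{\up^n}(M)$. Under the nuclearity hypothesis on $\cA_{\up_i^\infty}(M_i)$, these are nuclear Fréchet spaces, so their strong duals are (DF)-spaces of a particularly good kind, and the functor of strong duality turns the completed projective tensor product of nuclear Fréchet spaces into the completed (equivalently injective) tensor product of their duals: $(\cA_{\up_1^\infty}(M_1)\Ptens\cA_{\up_2^\infty}(M_2))'_\beta\cong \cA_{\up_1^\infty}(M_1)'_\beta\Ptens\cA_{\up_2^\infty}(M_2)'_\beta$, i.e. $\cO_{\up_1^\infty}(M_1)\Ptens\cO_{\up_2^\infty}(M_2)$. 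Combining with the Fréchet-space isomorphism from Part~(A) and the identification $\cA_{\up^\infty}(M_1\times M_2)'_\beta=\cO_{\up^\infty}(M_1\times M_2)$ gives the claim; one must also check that the resulting isomorphism is induced by the natural map $\cO(M_1)\otimes\cO(M_2)\to\cO(M_1\times M_2)$, which is automatic by transposing $\rho$ and tracking the duality pairings $\langle f_1\otimes f_2,\de_{x_1}\otimes\de_{x_2}\rangle=f_1(x_1)f_2(x_2)$.

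The main obstacle I expect is the isometry claim in Part~(A): showing that the projective tensor norm on $\cA_{\up_1}\Ptens\cA_{\up_2}$ transports exactly (not merely up to equivalence of norms) to $\|\cdot\|_\up$ on $\cA_\up(M_1\times M_2)$. The subtlety is that $V_\up$ is a \emph{closed} absolutely convex hull of delta functionals in $\cA(M_1\times M_2)$ while $V_{\up_1}\otimes V_{\up_2}$ sits in the algebraic tensor product, and one needs the weak-$*$ density of finite-support analytic functionals together with the compatibility of the closure operations under $\rho$; equivalently, one needs that the unit ball of $\cA_\up(M_1\times M_2)$ is the $\|\cdot\|_\up$-closed absolutely convex hull of $\{\up(x)^{-1}\de_x\}$ and that this is exactly the image of the $\pi$-ball. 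This is where the argument from the earlier part of the paper (the description of $V_\up$ and the identification $\cO_{exp}(G)=\cA_{exp}(G)'$, Lemma~\ref{Oexpptnal} and \cite[Pr.~2.12]{ArAMN}) must be invoked carefully. A secondary technical point is verifying the nuclearity passes correctly through the inductive/projective limits so that the strong-dual-of-$\Ptens$ formula applies in Part~(B); but given the hypothesis this is routine once the Fréchet-space case of Part~(A) is in hand.
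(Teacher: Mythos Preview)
Your proposal is correct and follows essentially the same route as the paper: for Part~(A) the paper also identifies the projective tensor prenorm $\|\cdot\|_{\up_1}\Ptens\|\cdot\|_{\up_2}$ with the Minkowski functional of $\acv(V_{\up_1}\otimes V_{\up_2})$ (citing \cite[III.6.3]{SM}), transports it through $\rho$ to the Minkowski functional of $\overline{\rho(S)}$, asserts $\overline{\rho(S)}=V_\up$, and then passes to the Fr\'echet statement by the diagonal-cofinality argument and commutation of $\Ptens$ with projective limits; for Part~(B) the paper likewise dualizes via nuclearity, invoking \cite[Lem.~2.11]{ArAMN} for $\cA_{\up_i^\infty}(M_i)'\cong\cO_{\up_i^\infty}(M_i)$ and the standard isomorphism $E'\Ptens F'\cong(E\Ptens F)'$ for nuclear Fr\'echet spaces. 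Your flagged ``main obstacle'' (the exact equality $\overline{\rho(S)}=V_\up$) is precisely the step the paper disposes of with ``it is not hard to see,'' so your caution there is appropriate but no new idea is required.
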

\begin{proof}
(A) Since $\|\cdot\|_{\up_i}$ is the Minkowski functional of
$V_{\up_i}$ ($i=1,2$), it follows from \cite[III.6.3]{SM} that
the projective tensor prenorm
$\|\cdot\|_{\up_1}\Ptens\|\cdot\|_{\up_2}$ on
$\cA(M_1)\otimes\cA(M_2)$ is the Minkowski functional of
$$
S\!:=\acv\{\mu_1\otimes\mu_2\in \cA(M_1)\otimes
\cA(M_2):\,\mu_1\in V_{\up_1},\, \mu_2\in V_{\up_2})\}\,.
$$
 Since
$\|\cdot\|_{\up_1}$ and $\|\cdot\|_{\up_2}$ are continuous on
$\cA(M_1)$ and $\cA(M_2)$, resp.,
$\|\cdot\|_{\up_1}\Ptens\|\cdot\|_{\up_2}$ is extended to a
continuous prenorm on $\cA(M_1)\Ptens\cA(M_2)$, which coincides
with the Minkowski functional of $\overline{\rho(S)}$ via the
topological isomorphism $\cA(M_1)\Ptens\cA(M_2)\to\cA(M_1\times
M_2)$. It is not hard to see that $\overline{\rho(S)}$ equals to
$V_\up$, the closure of
$$
\acv\{\up(x_1,x_2)^{-1}\de_{(x_1,x_2)}:\,(x_1,x_2)\in M_1\times
M_2\}\,
$$
in $\cA(M_1\times M_2)$. Thus
$\|\cdot\|_{\up_1}\Ptens\|\cdot\|_{\up_2}$ and $\|\cdot\|_{\up}$
are identical.

Further, consider the projective system
$$
(\cA_{\up_1^n}(M_1)\Ptens \cA_{\up_2^m}(M_2)\!:(n,m)\in\N^2)
$$
(with naturally defined connecting maps) in the category of
Fr\'echet spaces. Since the diagonal is cofinal in $\N^2$, we
have
$$
\varprojlim_{(n,m)\in
\N^2}(\cA_{\up_1^n}(M_1)\Ptens\cA_{\up_1^m}(M_2))\cong
\varprojlim_{n\in \N}(\cA_{\up_1^n}(M_1)\Ptens
\cA_{\up_2^n}(M_2))\,.
$$
Writing $\varprojlim_{(n,m)}$ as an iterated projective limit and
using the fact that projective tensor products of Fr\'echet
spaces commute with projective limits, we get
$$
\Bigl(\varprojlim_{n\in
\N}\cA_{\up_1^n}(M_1)\Bigr)\Ptens\Bigl(\varprojlim_{m\in
\N}\cA_{\up_1^m}(M_2)\Bigr)\cong \varprojlim_{(n,m)\in
\N^2}(\cA_{\up_1^n}(M_1)\Ptens\cA_{\up_1^m}(M_2))\,.
$$
Thus
\begin{multline*}
\cA_{\up_1^\infty}(M_1)\Ptens \cA_{\up_2^\infty}(M_2)\cong
(\varprojlim_{n\in \N}\cA_{\up_1^n}(M_1))\Ptens(\varprojlim_{m\in \N}\cA_{\up_1^m}(M_2))\cong\\
\varprojlim_{n\in \N}(\cA_{\up_1^n}(M_1)\Ptens
\cA_{\up_2^n}(M_2))\cong\varprojlim_{n\in \N}
\cA_{\up^n}(M_1\times M_2)\cong \cA_{\up^\infty}(M_1\times M_2)\,.
\end{multline*}

(B) Now suppose that each $\cA_{\up_i^\infty}(M_i)$ is nuclear for
$i=1,2$. Then the space $\cA_{\up_1^\infty}(M_1)\Ptens
\cA_{\up_2^\infty}(M_2)$ is also nuclear. Whence these spaces are
reflexive and we have from \cite[Lem.~2.11]{ArAMN} that
$\cA_{\up_i^\infty}(M_i)'\cong \cO_{\up_i^\infty}(M_i)$ and
$\cA_{\up^\infty}(M_1\times M_2)'\cong \cO_{\up^\infty}(M_1\times
M_2)$.

Recall that for any nuclear Fr\'echet spaces $E$ and $F$, the
natural linear map $ E'\otimes F'\to (E\Ptens F)'$ induces the
topological isomorphism $E'\Ptens F' \cong (E\Ptens F)'$. Thus
\begin{multline*}
\cO_{\up^\infty}(M_1\times M_2)\cong\cA_{\up^\infty}(M_1\times
M_2)'\cong\\ (\cA_{\up_1^\infty}(M_1)\Ptens
\cA_{\up_2^\infty}(M_2))'\cong \cA_{\up_1^\infty}(M_1)'\Ptens
\cA_{\up_2^\infty}(M_2)'\cong\\ \cO_{\up_1^\infty}(M_1)\Ptens
\cO_{\up_2^\infty}(M_2)\,.
\end{multline*}
\end{proof}

Recall that each submultiplicative weight has the form
$\om(g)=e^{\ell(g)}$, where $\ell$ is a length function. This
correspondence allows to apply results of Section~\ref{ABWLF}. In
decomposition \eqref{thredec}, three types of length function
appear:
\begin{itemize}
\item
$\log(1+\ell)$, where $\ell$ is a word length function on a simply
connected nilpotent Lie group;
\item
a word length function on a simply connected nilpotent Lie group
itself;
\item
a word length function on a  connected linearly complex reductive
Lie group.
\end{itemize}

We look on these cases separately.

If $G$ is an affine algebraic complex group, then we consider the
algebra $\cR(G)$ of regular (in the sense of algebraic geometry)
functions as a $\Ptens$-algebra w.r.t. the strongest locally
convex topology. Note that  a simply connected nilpotent complex
Lie group $G$  is affine algebraic and $\cR(G)$ is just the algebra of
polynomials.

 \begin{lm}\label{RE}
Let $G$ be a simply connected nilpotent complex Lie group and
$\om(g)\!:=1+\ell(g)$, where $\ell$ is a word length function on
$G$. Then $\cO_{\om^\infty}(G)= \cR(G)$ as locally convex
algebras.
 \end{lm}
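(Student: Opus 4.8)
The plan is to transport everything through the exponential map and reduce to the classical fact that an entire function on $\CC^m$ of polynomial growth is a polynomial. Since $G$ is simply connected nilpotent, it is affine algebraic and, as recalled just above the statement, $\exp\colon\fg\to G$ realizes $\cR(G)$ as the polynomial algebra on $\fg$. Fixing a basis of $\fg$ and the norm $\|\cdot\|$ from Lemma~\ref{loglogeq}, I identify $G$ with $\CC^m$ ($m=\dim_\CC G$) via exponential coordinates, so that $\cR(G)=\CC[\xi_1,\dots,\xi_m]$, and I write $\cP_n\subset\cR(G)$ for the finite-dimensional subspace of polynomials of degree $\le n$. Note that $\om=1+\ell$ takes values in $[1,+\infty)$ and is locally bounded, so each $\cO_{\om^n}(G)$ is defined by \eqref{cOom}. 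The proof of Lemma~\ref{loglogeq} supplies constants and an integer $k\ge1$ with
$$\ell(\exp\xi)\le C_1\|\xi\|+D_1\quad\text{and}\quad\|\xi\|\le C_2\,\ell(\exp\xi)^k+D_2\qquad(\xi\in\fg).$$

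Next I translate the two growth conditions into each other. If $f\in\cO_{\om^n}(G)$, then $|f(\exp\xi)|\le|f|_{\om^n}(1+\ell(\exp\xi))^n\le C\,(1+\|\xi\|)^n$, so $f\circ\exp$ is entire on $\CC^m$ with polynomial growth of order $n$; Cauchy's estimates over polydiscs of large radius then annihilate all its Taylor coefficients of degree exceeding $n$, whence $f\in\cP_n$. Conversely, for $p\in\cP_n$ one has $|p(\xi)|\le C'(1+\|\xi\|)^n\le C''(1+\ell(\exp\xi))^{kn}$, i.e. $p\in\cO_{\om^{kn}}(G)$. Therefore
$$\cP_n\subseteq\cO_{\om^{kn}}(G)\subseteq\cP_{kn}\qquad(n\in\N),$$
and passing to unions gives $\cO_{\om^\infty}(G)=\bigcup_n\cO_{\om^n}(G)=\bigcup_n\cP_n=\cR(G)$ as algebras, the multiplication being pointwise on both sides.

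It remains to compare the locally convex topologies. By the displayed inclusions each $\cO_{\om^n}(G)$ is finite-dimensional, so $\cO_{\om^\infty}(G)=\varinjlim_n\cO_{\om^n}(G)$ is the inductive limit of an increasing sequence of finite-dimensional spaces whose union is $\cR(G)$. A seminorm on $\cR(G)$ is continuous for this inductive limit topology iff its restriction to each $\cO_{\om^n}(G)$ is continuous, which is automatic; hence every seminorm on $\cR(G)$ is continuous, that is, the inductive limit topology is the strongest locally convex topology. This is exactly the topology carried by $\cR(G)$, so $\cO_{\om^\infty}(G)=\cR(G)$ as locally convex algebras.

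The only slightly delicate point is this last step — checking that the inductive limit of the Banach spaces $\cO_{\om^n}(G)$ reproduces the finest locally convex topology — but it becomes immediate once one knows each $\cO_{\om^n}(G)$ is finite-dimensional; the substantive input is the elementary Liouville-type argument combined with the quasi-isometry estimate of Section~\ref{ABWLF}.
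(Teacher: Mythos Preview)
Your proof is correct and follows essentially the same route as the paper: both use Lemma~\ref{loglogeq} to convert the bound $|f|\le C\om^n=C(1+\ell)^n$ into polynomial growth in $\|\xi\|$ on $\fg\cong\CC^m$, then identify polynomially bounded entire functions with polynomials and conclude the inductive topology is the finest locally convex one. Your version simply spells out the Liouville-type step and the finite-dimensionality argument that the paper leaves implicit.
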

 \begin{proof}
If $\|\cdot\|$ is a norm on the Lie algebra $\fg$ of $G$, then,
by Lemma~\ref{loglogeq}, we have $\log(1+\ell(\exp\eta))\simeq
\log (1+\|\eta\|)$ on $\fg$. So $f \in\cO_{\om^\infty}(G)$ iff it
is bounded by a polynomial in norm.  Hence  $\cO_{\om^\infty}(G)=
\cR(G)$. Furthermore, the topology on $\cR(G)$ coincides with the
inductive topology of $\cO_{\om^\infty}(G)$.
\end{proof}

The case of a word length function  on a  simply connected
nilpotent complex Lie group is considered in \cite{ArAMN}. The
following result is [ibid. Th.~3.2].
\begin{thm}\label{exptypdesc}
Let $G$ be a simply connected nilpotent complex Lie group with
Lie algebra $\fg$, and let  $(t_1,\ldots, t_ m)$ be the canonical
coordinates of the first kind associated with an
$\mathscr{F}$-basis in $\fg$, where $\mathscr{F}$ is the lower
central series. Then
\begin{multline*}
\cO_{exp}(G)= \bigl\{f\in \cO(G):\, \\ \exists C>0,\, \exists
r\in \R_+ \, \text{s.t.}\,|f(t_1,\ldots, t_ m)|\le C
e^{r\max_i|t_i|^{1/w_i}}\,\forall t_1,\ldots, t_ m \bigr\}
\end{multline*}
and  we have
$$
\cO_{exp}(G)\cong \varinjlim_{r\in \R_+} \cO_{\eta^r}(G)
$$
as locally convex spaces, where $\eta(t_1,\ldots, t_
m)\!:=e^{\max_i|t_i|^{1/w_i}}$ and the Banach space
$\cO_{\eta^r}(G)$ is defined  as in~\eqref{cOom}.
\end{thm}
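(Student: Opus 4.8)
The plan is to deduce the whole statement from the asymptotics of a word length function on $G$ written in the canonical coordinates of the first kind. The key geometric input is the estimate
$$
\ell_w(\exp(t_1 X_1+\cdots+t_m X_m))\simeq\max_i|t_i|^{1/w_i}\qquad\text{on }\fg\,,
$$
where $X_1,\dots,X_m$ is the chosen $\mathscr{F}$-basis, $w_i$ is the index of the term of the lower central series to which $X_i$ belongs, and $\ell_w$ is any word length function on $G$. Since a complex Lie group is in particular a real Lie group and $G$ is simply connected nilpotent, this is a special case of the classical estimate going back to Guivarc'h; for the graded form see \cite{VSC92} and \cite{Kar94}, and for the coarser two-sided bound (as in Lemma~\ref{loglogeq}) see \cite[Lem.~II.1]{Gu73} or \cite[(4.2)]{Kar94}. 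The refinement replacing the single exponent of Lemma~\ref{loglogeq} by the individual weights $1/w_i$ reflects the homogeneity of $\ell_w$ under the dilations adapted to the lower central series.

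Granting this, the set-theoretic description is immediate. If $f\in\cO_{exp}(G)$, write $|f|\le\om$ for a submultiplicative weight $\om=e^{\ell}$; since $\ell$ is a length function, Lemma~\ref{wlfeq} yields $\ell\le C\ell_w+D$, hence $|f|\le e^{D}e^{C\ell_w}$, and the geometric estimate converts this into a bound $|f(t_1,\dots,t_m)|\le C'e^{r\max_i|t_i|^{1/w_i}}$ for suitable $C',r$. Conversely, such a bound gives $|f|\le C''e^{r'\ell_w}\le M e^{r'\ell_w}$ with $M\ge1$, and $M e^{r'\ell_w}$ is again a submultiplicative weight, so $f\in\cO_{exp}(G)$. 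In particular $\cO_{exp}(G)=\bigcup_{r\in\R_+}\cO_{\eta^r}(G)$ as sets, where $\eta(t_1,\dots,t_m)=e^{\max_i|t_i|^{1/w_i}}$ (note $\eta^r$ need not be submultiplicative, but $\cO_{\eta^r}(G)$ still makes sense).

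For the topological assertion I would compare the two directed families of weights $(e^{r\ell_w})_{r\in\R_+}$ and $(\eta^r)_{r\in\R_+}$. From $\ell_w\simeq\max_i|t_i|^{1/w_i}$ one gets $e^{r\ell_w}\le e^{rD}\eta^{rC}$ and $\eta^r\le e^{rD'}e^{rC'\ell_w}$, so the identity maps induce, after reindexing, mutually continuous inclusions among the Banach spaces $\cO_{e^{r\ell_w}}(G)$ and $\cO_{\eta^s}(G)$; hence $\varinjlim_r\cO_{e^{r\ell_w}}(G)\cong\varinjlim_r\cO_{\eta^r}(G)$ as locally convex spaces. Moreover, by the argument of the previous paragraph every submultiplicative weight $\om$ satisfies $\om\le e^{D}e^{C\ell_w}$, so the subfamily $(e^{r\ell_w})_{r\in\R_+}$ is cofinal among all submultiplicative weights for the purpose of forming the inductive limit: each $\cO_\om(G)$ maps continuously into some $\cO_{e^{r\ell_w}}(G)$. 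Therefore $\cO_{exp}(G)=\varinjlim_\om\cO_\om(G)=\varinjlim_r\cO_{e^{r\ell_w}}(G)=\varinjlim_r\cO_{\eta^r}(G)$ as locally convex spaces, which is the claim.

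The main obstacle is establishing the graded word length estimate $\ell_w(\exp\sum t_iX_i)\simeq\max_i|t_i|^{1/w_i}$ with the precise weights $w_i$ read off from the lower central series — this is the substantive input, and one must check that the real-analytic results apply verbatim to the underlying real group. Once this is in hand, the rest is a routine cofinality manipulation with directed families of weights; the only mild subtlety is that none of the inductive limits here is assumed strict, but cofinality of a subfamily still suffices to identify inductive limits in the category of locally convex spaces.
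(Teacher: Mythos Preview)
The paper does not contain a proof of this theorem: it is quoted verbatim from the companion paper \cite{ArAMN} (Theorem~3.2 there), so there is no in-paper argument to compare against. Your outline is the natural route and is essentially what one expects the proof in \cite{ArAMN} to be: the decisive input is the graded word-length estimate $\ell_w(\exp\sum t_iX_i)\simeq\max_i|t_i|^{1/w_i}$ for an $\mathscr{F}$-basis, after which the set equality and the cofinality argument identifying the two inductive limits are routine. Your handling of the cofinality is correct, since $e^{r\ell_w}$ is itself a submultiplicative weight (so the family $(e^{r\ell_w})_r$ is genuinely a subfamily of $(\om)_\om$, not merely dominated by it), and the mutual domination $e^{r\ell_w}\lesssim\eta^{Cr}$, $\eta^r\lesssim e^{C'r\ell_w}$ yields the topological identification of the inductive limits. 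The only point deserving care is the graded estimate itself: the references \cite{VSC92,Kar94} give it for real simply connected nilpotent groups with a basis adapted to the lower central series, and one must note that the $w_i$ are unchanged when one regards the complex group as a real one (the lower central series is the same, and each complex basis vector contributes two real vectors of the same weight), so the real result transfers directly.
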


To consider the linearly complex reductive case we need the
following result, which is well known.
\begin{thm} \label{repolho}
Let  $L$  be  connected linearly complex reductive.  Then  any
holomorphic   homomorphism  of  $L$  into  a  complex algebraic  group  $H$
is  polynomial.
\end{thm}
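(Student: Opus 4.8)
The plan is to reduce first to the case $H=\GL_n(\CC)$ and then to show that every holomorphic finite-dimensional representation of~$L$ has matrix entries in $\cR(L)$, which is the classical core of the assertion. For the reduction, recall that a linear algebraic group $H$ is isomorphic, as an algebraic group, to a Zariski closed subgroup of some $\GL_n(\CC)$; if $\phi\colon L\to H$ is holomorphic and the composition $L\to H\hookrightarrow\GL_n(\CC)$ is polynomial, then $\phi$ itself is polynomial, because a morphism of varieties whose image lies in a closed subvariety is a morphism to that subvariety. Thus it suffices to prove the theorem for $H=\GL_n(\CC)$, i.e., to show that an arbitrary holomorphic homomorphism $\phi\colon L\to\GL_n(\CC)$ is given by regular functions on~$L$; once the matrix entries of $\phi$ are known to lie in $\cR(L)$, the map is automatically a morphism, since $\GL_n(\CC)$ is open in $\M_n(\CC)$ and a morphism $L\to\M_n(\CC)$ with image in $\GL_n(\CC)$ is a morphism $L\to\GL_n(\CC)$.

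Since $L$ is linearly complex reductive we may write $L=K^{*}$ for a compact real Lie group~$K$, so that $L$ is the universal complexification of~$K$ and, in particular, a connected reductive linear algebraic group over~$\CC$. I will use two standard facts. \emph{(a)} The subgroup $K$ is a set of uniqueness for holomorphic functions on~$L$: it is a totally real submanifold with $\dim_{\R}K=\dim_{\CC}L$ and $L$ is connected, so a holomorphic function on~$L$ vanishing on~$K$ vanishes identically; in particular $K$ is Zariski dense in~$L$. \emph{(b)} Restriction to~$K$ carries $\cR(L)$ isomorphically onto the algebra of representative functions of~$K$ (the linear span of matrix coefficients of finite-dimensional continuous representations of~$K$); equivalently, every irreducible continuous representation of~$K$ extends to a rational representation of~$L$. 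Fact~\emph{(b)} is the classical description of the coordinate ring of the complexification of a compact group, and I would quote it from the literature on reductive complex Lie groups (see, e.g., \cite{HiNe} or \cite{OV3}) rather than reprove it.

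Granting \emph{(a)} and \emph{(b)}, the conclusion is short. Restricting $\phi$ to~$K$, each matrix coefficient $g\mapsto\langle x,\phi(g)v\rangle$ (with $v\in\CC^{n}$, $x\in(\CC^{n})^{*}$) restricts to a representative function of~$K$, hence, by~\emph{(b)}, to the restriction of a unique element of $\cR(L)$. These matrix coefficients are holomorphic on~$L$ and agree on~$K$ with the corresponding regular functions, so by~\emph{(a)} they coincide with them on all of~$L$. Hence the entries of $\phi$ lie in $\cR(L)$, so $\phi$ is polynomial, and combining this with the first step proves the theorem.

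I expect the only genuine obstacle to be fact~\emph{(b)} itself --- that holomorphic (equivalently, by the universal property of the complexification, continuous) representations of a linearly complex reductive group are rational; this is precisely what makes the statement nontrivial, while the remaining steps are formal. It is also worth noting that ``complex algebraic group'' must here be read as ``linear algebraic group'': a holomorphic homomorphism from $\CC^{\times}$ to an elliptic curve need not be polynomial, so some affineness assumption on~$H$ is indispensable.
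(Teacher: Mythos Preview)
Your argument is correct but follows a different route from the paper's. The paper gives no detailed proof: it simply refers to \cite[Th.~3.3.4]{ViOn} and notes that the only adjustment needed is that, while for semisimple $\fg$ one uses $[\fg,\fg]=\fg$, for reductive $\fg$ one instead invokes the fact that a reductive subalgebra of a complex Lie algebra is algebraically closed. That approach works on the Lie-algebra level: one shows that the image of $d\phi$ in $\mathrm{Lie}(H)$ is an algebraic subalgebra, hence tangent to an algebraic subgroup, and concludes from there. Your approach instead goes through the compact real form $K$ with $L=K^{*}$: after reducing to $H=\GL_n(\CC)$, you invoke the Tannaka--Chevalley identification of $\cR(L)$ with the algebra of representative functions on $K$ (your fact~\emph{(b)}) and then use analytic continuation from the totally real maximal-dimensional submanifold $K$. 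This is arguably closer in spirit to the way the paper itself handles $L$ elsewhere (cf.\ Lemma~\ref{cLgrfg} and Theorem~\ref{redreg}), and it makes the role of the compact form explicit; on the other hand, fact~\emph{(b)} is essentially of the same depth as the theorem itself, so your proof trades one substantial citation for another rather than giving a more elementary argument. The Onishchik--Vinberg route has the advantage of never leaving the holomorphic/algebraic category and not needing the compact form at all. Your closing remark that $H$ must be affine for the statement to hold is a valid and useful caveat.
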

The proof is similar to \cite[Th.~3.3.4]{ViOn}. The only step which is
different is that although we cannot claim that $[\fg,\fg]=\fg$ but
nevertheless  any reductive subalgebra in a complex Lie algebra is
algebraically closed.

Note that any connected linearly complex reductive group $L$ is an
affine algebraic group. This can be obtain, e.g.,  by application
of \cite[Ths.~2.23, 5.10]{Le02} from the fact that the algebra of
real analytic representative functions on $K$, where $L$ is a
universal complexification of  $K$, is finitely generated
\cite[Ch.~VI, \S~VII]{Ch46}.

\begin{thm}\label{redreg}
Suppose that $L$ is connected linearly complex reductive. Then
$\cO_{exp}(L)=\cR(L)$ as a locally convex algebra.
\end{thm}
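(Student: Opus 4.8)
The plan is to prove the two inclusions $\cR(L)\subset\cO_{exp}(L)$ and $\cO_{exp}(L)\subset\cR(L)$, and then to check the topologies agree. The first inclusion is easy. The left regular action of the affine algebraic group $L$ on $\cR(L)$ is locally finite, so every regular function is a matrix coefficient of a finite-dimensional rational (hence holomorphic) representation of $L$; by Proposition~\ref{exthomo} such a function lies in $\cO_{exp}(L)$. Since $\cR(L)$ carries the strongest locally convex topology, the inclusion $\cR(L)\to\cO_{exp}(L)$ is automatically continuous.

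For the reverse inclusion the crux is a word length estimate. Fix a faithful finite-dimensional holomorphic representation $\rho_0$ of $L$ (available because $L$ is linear) and set $\rho\!:=\rho_0\oplus\rho_0^*$; by Theorem~\ref{repolho} the representation $\rho$ is polynomial, so it realizes $L$ as a closed affine algebraic subgroup of $\GL(V)$, which we regard as a closed subvariety of a complex affine space via $g\mapsto(\rho(g),\det\rho(g)^{-1})$, and moreover $\|\rho(g^{-1})\|\simeq\|\rho(g)\|$ on $L$. Put $\varpi(g)\!:=1+\|\rho(g)\|$, a submultiplicative weight on $L$. I claim that a word length function $\ell_L$ on $L$ satisfies $e^{\ell_L}\simeq\varpi$. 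One direction is immediate: $\log\varpi$ is a length function (submultiplicativity of $\varpi$), so $\log\varpi\lesssim\ell_L$ by Lemma~\ref{wlfeq}. The other direction, $\ell_L\lesssim\log\varpi$ --- that $L$ is undistorted relative to the matrix norm --- is the main obstacle. I would obtain it by reducing, through finite covers and Proposition~\ref{ellNH}, to the direct product $Z(L)_0\times S$ of an algebraic torus and a connected semisimple complex group, and then handling the factors separately: for $Z(L)_0\cong(\CC^\times)^k$ a word length function is $\simeq\max_j|\log|z_j||$ (elementary), and for $S$ a word length function is equivalent to the norm of the Cartan projection, which is $\simeq\log\|\rho|_S\|$ once the action of a maximal compact subgroup of $S$ is unitarized and faithfulness is used to see that the relevant weights span.

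Granting $e^{\ell_L}\simeq\varpi$, every submultiplicative weight on $L$ has the form $e^{\ell}$ with $\ell$ a length function, and $\ell\lesssim\ell_L$ by Lemma~\ref{wlfeq}, so every submultiplicative weight is dominated by some power $\varpi^N$. Hence the weights $(\varpi^n)_{n\in\N}$ are cofinal and $\cO_{exp}(L)=\varinjlim_n\cO_{\varpi^n}(L)$, so that $f\in\cO_{exp}(L)$ precisely when $f$ is a holomorphic function on the affine variety $L$ of polynomial growth in the ambient norm. Now I invoke the classical fact that a holomorphic function of polynomial growth on a smooth closed affine subvariety of a complex affine space is the restriction of a polynomial; this gives $\cO_{exp}(L)\subset\cR(L)$, and, more precisely, shows that the growth filtration of $\cO_{exp}(L)$ is carried into the degree filtration of $\cR(L)$ up to a bounded shift, so that each Banach space $\cO_{\varpi^n}(L)$ is finite-dimensional.

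Finally, $\cO_{exp}(L)=\varinjlim_n\cO_{\varpi^n}(L)$ is then the inductive limit of an increasing sequence of finite-dimensional spaces, and therefore carries the strongest locally convex topology; since $\cR(L)$ is equipped by definition with the same topology and the two function algebras coincide as sets, we conclude $\cO_{exp}(L)=\cR(L)$ as locally convex algebras. The only serious point in this scheme is the non-distortion estimate $\ell_L\lesssim\log\varpi$ of the second paragraph; the polynomial-growth regularity statement and the functional-analytic bookkeeping are routine.
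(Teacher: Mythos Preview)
Your argument is correct but follows a genuinely different route from the paper's. The paper works on the dual side and avoids word-length estimates entirely: since $L$ is the universal complexification of a compact group $K$ and $\mathscr{E}(K)'\to\cA(L)$ has dense range, Lemma~\ref{cLgrfg} forces every Banach-algebra completion $\cA_\omega(L)$ to be classically semisimple, hence finite-dimensional; by duality each $\cO_\omega(L)$ is finite-dimensional, which already yields the strongest locally convex topology on $\cO_{exp}(L)$. For the inclusion $\cO_{exp}(L)\subset\cR(L)$ the paper uses Proposition~\ref{exthomo} to realise any $f$ as a matrix coefficient in a Banach algebra, reduces to a finite-dimensional representation by the same semisimplicity, and concludes via Theorem~\ref{repolho}. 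Your route instead trades these representation-theoretic facts for two analytic inputs external to the paper: the undistortion estimate $\ell_L\simeq\log(1+\|\rho\|)$ (true via the $KAK$ decomposition, as you sketch) and the polynomial-growth Liouville theorem for smooth affine varieties. Both are valid, and your approach has the virtue of making the link with the word-length machinery of Section~\ref{ABWLF} explicit; the paper's argument is shorter, stays within the toolkit already assembled, and sidesteps the ``only serious point'' you flag.
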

\begin{proof}
Since $L$ is an affine algebraic, we have
$\cR(G)\subset\cO_{exp}(G)$   \cite[(5.31)]{Ak08}.

Consider a compact subgroup $K$ s.t. $L$ is the universal
complexification of~$K$. Note that the map $\mathscr{E}(K)'\to
\cA(L)$, which is dual to $\cO(L)\to \mathscr{E}(K)$, has dense
range \cite[Pr.~4]{Li70}.  Then it follows from Lemma~\ref{cLgrfg}
that the closure of the range of any homomorphism of $\cA(L)$ to a
Banach algebra  is classically semisimple.   In particular, if
$\om$ is a submultiplicative weight on $L$, then $\cA_\om(L)$  is
finite-dimensional.  By \cite[Lem.~2.10]{ArAMN}, we get
$\cO_\om(L)\cong \cA_\om(L)'$; hence $\cO_\om(L)$ is
finite-dimensional. Thus $\cO_{exp}(L)$ is an inductive limit of
finite-dimensional spaces, hence the topology on $\cO_{exp}(L)$ is
the strongest locally convex topology.

By Proposition~\ref{exthomo}, any $f\in\cO_{exp}(L)$ is a
coefficient of a holomorphic homomorphism to the invertibles of a
Banach algebra. As pointed out above, we can assume that this
Banach algebra is classically semisimple; so $f$ is a coefficient
of some holomorphic finite-dimensional representation. Moreover,
Theorem~\ref{repolho} implies that this representation is
polynomial; therefore $f\in \cR(L)$.  So $\cO_{exp}(L)\subset
\cR(L)$; thus $\cO_{exp}(L)=\cR(L)$ and the topologies coincide.
\end{proof}

Now we prove our main result.

\begin{thm}\label{fexpdec}
Let $G$ be a connected linear complex Lie group and $E$ is the
exponential radical of $G$. Fix a decomposition $G\cong B\rtimes
L$, where $B$  is simply connected nilpotent and $L$ is linearly
complex reductive. Then  the map $\tau$ defined in
Theorem~\ref{qitiG} induces an isomorphism of $\Ptens$-algebras
 $$
\cR(E) \Ptens \cO_{exp}(B/E)\Ptens \cR(L)\to \cO_{exp}(G)\,,
 $$
where $\cO_{exp}(B/E)$ is described in Theorem~\ref{exptypdesc}.
\end{thm}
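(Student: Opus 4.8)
The plan is to pull the whole problem back along the biholomorphism $\tau$ of Theorem~\ref{qitiG} to the product manifold $\fe\times\fv\times L$, to recognise $\cO_{exp}(G)$ there as a weighted space $\cO_{\up^\infty}$ for an appropriate \emph{product} weight $\up=\up_1\up_2\up_3$, and then to peel off the three tensor factors by two applications of Proposition~\ref{Aom12}.

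First I would fix the three model weights, one on each factor. On $\fe$, identified with $E$ via $\exp$, take $\up_1(\eta):=1+\|\eta\|$; a holomorphic function on $\fe\cong\CC^k$ bounded by a fixed power of $1+\|\eta\|$ is a polynomial, so each $\cO_{\up_1^n}(\fe)$ is finite-dimensional, whence $\cO_{\up_1^\infty}(\fe)=\cR(E)$ with the strongest locally convex topology (this is Lemma~\ref{RE}, via $\log(1+\ell_0(\exp\eta))\simeq\log(1+\|\eta\|)$), and the predual $\cA_{\up_1^\infty}(\fe)=\varprojlim_n\cA_{\up_1^n}(\fe)$ is a countable projective limit of finite-dimensional spaces, hence a nuclear Fr\'echet space. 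On $\fv$, identified with $B/E$ by $\xi\mapsto\pi(\exp\xi)$ — a simply connected nilpotent group by Proposition~\ref{exraco} and Theorem~\ref{GEdecom} — take $\up_2(\xi):=e^{\ell_1(\pi(\exp\xi))}$; since the weights $e^{n\ell_1}$ are cofinal among submultiplicative weights on $B/E$ (Lemma~\ref{wlfeq}), $\cO_{\up_2^\infty}(\fv)=\cO_{exp}(B/E)$, described in Theorem~\ref{exptypdesc} via the equivalent weight $e^{\max_i|t_i|^{1/w_i}}$, and $\cA_{\up_2^\infty}(\fv)=\cA_{exp}(B/E)$ is a nuclear Fr\'echet space by \cite{ArAMN}. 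On $L$ take $\up_3:=e^{\ell_2}$; each $\cA_{\up_3^n}(L)=\cA_{e^{n\ell_2}}(L)$ is finite-dimensional by the proof of Theorem~\ref{redreg}, so $\cA_{\up_3^\infty}(L)$ is again nuclear Fr\'echet, while $\cO_{\up_3^\infty}(L)=\cO_{exp}(L)=\cR(L)$ (Lemma~\ref{wlfeq}, Theorem~\ref{redreg}) with the strongest topology.

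The heart of the argument is the identification $\cO_{exp}(G)\cong\cO_{\up^\infty}(\fe\times\fv\times L)$, with $\up(\eta,\xi,l):=\up_1(\eta)\up_2(\xi)\up_3(l)$, induced by $\tau$. As $\tau$ is a biholomorphic equivalence, $\tau^*\colon\cO(G)\to\cO(\fe\times\fv\times L)$ is a topological algebra isomorphism for pointwise multiplication, so only the weighted subspaces must be matched. Fix a word length function $\ell$ on $G$ and put $\om_G:=e^{\ell}$, a submultiplicative weight; by Lemma~\ref{wlfeq} every submultiplicative weight on $G$ is majorised by a power of $\om_G$, so $\cO_{exp}(G)=\varinjlim_n\cO_{\om_G^n}(G)$. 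By \eqref{thredec} together with $\log(1+\ell_0(\exp\eta))\simeq\log(1+\|\eta\|)$ one has
$$
\ell\circ\tau\;\simeq\;\log\up_1+\log\up_2+\log\up_3\;=\;\log\up\qquad\text{on }\fe\times\fv\times L,
$$
that is, $\om_G\circ\tau$ is bounded above and below by bounded powers of $\up$ up to multiplicative constants. Since $\tau^*$ sends $\cO_{\om_G^n}(G)$ isometrically onto $\cO_{(\om_G\circ\tau)^n}(\fe\times\fv\times L)$ and the weight families $\{(\om_G\circ\tau)^n\}_n$ and $\{\up^n\}_n$ are mutually cofinal, the inductive limits agree, giving the desired topological isomorphism of $\Ptens$-algebras $\cO_{exp}(G)\cong\cO_{\up^\infty}(\fe\times\fv\times L)$. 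I expect this step to be the main obstacle: one must track precisely how much growth each factor is allowed — in particular that $E$, being exponentially distorted in $G$, admits only \emph{polynomial} growth and hence contributes $\cR(E)$, not $\cO_{exp}(E)$ — and one must check that the three weight families interleave so that the inductive-limit topologies match.

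Finally the product is split off. Since $\cA_{\up_1^\infty}(\fe)$ and $\cA_{\up_2^\infty}(\fv)$ are nuclear Fr\'echet spaces, Proposition~\ref{Aom12}(A) gives that $\cA_{(\up_1\up_2)^\infty}(\fe\times\fv)\cong\cA_{\up_1^\infty}(\fe)\Ptens\cA_{\up_2^\infty}(\fv)$ is again a nuclear Fr\'echet space, and part~(B) gives $\cO_{(\up_1\up_2)^\infty}(\fe\times\fv)\cong\cR(E)\Ptens\cO_{exp}(B/E)$. Applying Proposition~\ref{Aom12}(B) once more to $(\fe\times\fv,\up_1\up_2)$ and $(L,\up_3)$ yields $\cO_{\up^\infty}(\fe\times\fv\times L)\cong\cR(E)\Ptens\cO_{exp}(B/E)\Ptens\cR(L)$. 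All the isomorphisms here are induced by the canonical maps $\cO(M_1)\otimes\cO(M_2)\to\cO(M_1\times M_2)$, hence are $\Ptens$-algebra isomorphisms for pointwise multiplication; composed with the isomorphism of the previous paragraph they produce the map of the theorem, sending an elementary tensor $f_1\otimes f_2\otimes f_3$ to $g\mapsto f_1(\eta)f_2(\pi(\exp\xi))f_3(l)$ for $g=\exp(\eta)\exp(\xi)l$. Apart from the core step, the only additional inputs are the nuclearity bookkeeping above and the citation of \cite{ArAMN} for nuclearity of $\cA_{exp}(B/E)$.
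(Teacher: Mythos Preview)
Your proposal is correct and follows essentially the same route as the paper: pull back along $\tau$, use the length-function equivalence of Theorem~\ref{qitiG} to identify $\cO_{exp}(G)$ with $\cO_{\up^\infty}$ on the product for a product weight, verify nuclearity of the three predual factors, and split off the tensor factors via Proposition~\ref{Aom12}, invoking Lemma~\ref{RE} and Theorem~\ref{redreg} for the outer factors. The only cosmetic differences are that the paper works on $E\times B/E\times L$ rather than $\fe\times\fv\times L$ (an identification via exponentials), uses $\om_0=1+\ell_0$ rather than $1+\|\eta\|$ (equivalent by Lemma~\ref{loglogeq}), and obtains nuclearity of the three $\cA_{\om_i^\infty}$ by citing \cite[Th.~5.10]{Ak08} (Arens--Michael envelopes of nuclear algebras are nuclear) and the formal-power-series description of $\cA_{\om_0^\infty}(E)$, whereas you argue finite-dimensionality of the individual $\cA_{\up_i^n}$ for $i=1,3$ and cite \cite{ArAMN} for $i=2$.
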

\begin{proof}
Since $B/E$ and $E$ are nilpotent and simply connected,  the
exponential maps on $B/E$ and $E$ are  biholomorphic
equivalences. So we can consider $\tau$ as a map from $E\times
B/E\times L$ to $G$ (up to the identification of $\fv$ with
$\fb/\fe$).

Let $\ell$, $\ell_0$, $\ell_1$, and $\ell_2$ denote word length
functions on $G$, $E$, $B/E$, and $L$, resp.
Define submultiplicative weights
$$
\om(g)\!:=e^{\ell(g)},\quad \om_0(e)\!:=1+\ell_0(e),\quad
\om_1(h)\!:=e^{\ell_1(h)},\quad\om_2(l)\!:=e^{\ell_2(l)}
$$
on $G$, $E$, $B/E$, and $L$, resp. Note that $\cO_{exp}(G)$ is
topologically isomorphic to $\cO_{\om^\infty}(G)$ (see
\cite[Th.~4.3]{Ak08} or \cite[Pr.~2.8]{ArAMN}) . From the length
function equivalence given in Theorem~\ref{qitiG}, we have that
$\cO_{\om^\infty}(G)$ is topologically isomorphic to
$\cO_{\up^\infty}(E\times B/E\times L)$, where
$\up(e,h,l)\!:=\om_0(e)\om_1(h)\om_2(l)$.

Lemma~\ref{RE} implies that $\cO_{\om_0^\infty}(E)= \cR(E)$ as
locally convex spaces. Then $\cA_{\om_0^\infty}(E)$ is a space of
formal power series, which is nuclear. Furthermore,  being the
Arens-Michael envelopes of  $\cA(B/E)$ and $\cA(L)$, the
Fr\'{e}chet algebras $\cA_{\om_1^\infty}(B/E)$ and
$\cA_{\om_2^\infty}(L)$ are also nuclear \cite[Th.~5.10]{Ak08}.
Applying Proposition~\ref{Aom12}, we have
$$
\cO_{\up^\infty}(E\times B/E\times L)\cong
\cO_{\om_0^\infty}(E)\Ptens \cO_{\om_1^\infty}(B/E)\Ptens
\cO_{\om_2^\infty}(L)\,.
$$
Finally,  Theorem~\ref{redreg} implies that
$\cO_{\om_2^\infty}(L)= \cO_{exp}(L)=\cR(L)$.
\end{proof}

Thus the combination of Theorems~\ref{redliexpf}, \ref{exptypdesc},
and~\ref{fexpdec} gives a complete description of the algebra
$\cO_{exp}(G)$ for an arbitrary connected complex Lie group~$G$.

Now we turn to an application of these  results to a question on
the Arens-Michael envelope of $\cO_{exp}(G)$, which initially
motivated this research.

\begin{co}\label{densrexp}
The map $\tau$ defined in Theorem~\ref{qitiG} induce an
homomorphism
 $$
\te\!:\cR(E\times B/E \times L)\to \cO_{exp}(G)
 $$
 that has dense range.
\end{co}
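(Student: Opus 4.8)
The plan is to recognise $\theta$ as the restriction to a dense subalgebra of the isomorphism produced by Theorem~\ref{fexpdec}. Denote that isomorphism by $\Phi\colon\cR(E)\Ptens\cO_{exp}(B/E)\Ptens\cR(L)\to\cO_{exp}(G)$. As used in the proof of Theorem~\ref{fexpdec}, the groups $E$ and $B/E$ are simply connected nilpotent, hence affine algebraic, and $L$ is affine algebraic as well; therefore $E\times B/E\times L$ is an affine algebraic variety and $\cR(E\times B/E\times L)=\cR(E)\otimes\cR(B/E)\otimes\cR(L)$, the algebraic tensor product. By Lemma~\ref{RE} the first factor of the domain of $\Phi$ is exactly $\cR(E)$, by Theorem~\ref{redreg} the third is exactly $\cR(L)$, while $\cR(B/E)$ sits inside $\cO_{exp}(B/E)$ (polynomials on a simply connected nilpotent group are of exponential type, cf. Theorem~\ref{exptypdesc}). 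On an elementary tensor $f_1\otimes f_2\otimes f_3$ the map $\Phi$ returns the pull-back along $\tau$ of the corresponding function on $E\times B/E\times L$, i.e.\ it acts exactly as $\theta$ does; hence $\theta$ is the composite of the inclusion $\cR(E)\otimes\cR(B/E)\otimes\cR(L)\hookrightarrow\cR(E)\Ptens\cO_{exp}(B/E)\Ptens\cR(L)$ with $\Phi$. In particular $\theta$ is a well-defined $\Ptens$-algebra homomorphism.

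Since $\Phi$ is a topological isomorphism, it suffices to prove that $\cR(E)\otimes\cR(B/E)\otimes\cR(L)$ is dense in $\cR(E)\Ptens\cO_{exp}(B/E)\Ptens\cR(L)$. For this I would use the elementary fact that if $D_i$ is a dense subspace of a locally convex space $V_i$, then $D_1\otimes D_2\otimes D_3$ is dense in $V_1\Ptens V_2\Ptens V_3$: the trilinear map $(v_1,v_2,v_3)\mapsto v_1\otimes v_2\otimes v_3$ is continuous for the projective topologies, so $D_1\otimes D_2\otimes D_3$ is dense in the algebraic tensor product $V_1\otimes V_2\otimes V_3$, which is dense in the completion. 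Taking $D_1=V_1=\cR(E)$, $D_3=V_3=\cR(L)$ and $D_2=\cR(B/E)\subset V_2=\cO_{exp}(B/E)$, the problem reduces to the single assertion that polynomials are dense in $\cO_{exp}(N)$ for every simply connected nilpotent complex Lie group~$N$.

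To settle this I would invoke Theorem~\ref{exptypdesc}, which presents $\cO_{exp}(N)=\varinjlim_{r>0}\cO_{\eta^r}(N)$ in suitable canonical coordinates $(t_1,\dots,t_m)$ with $\eta(t)=e^{\max_i|t_i|^{1/w_i}}$, and truncate the Taylor expansion $f=\sum_\alpha a_\alpha t^\alpha$ by the weighted degree $k(\alpha):=\sum_i w_i\alpha_i$. An anisotropic Cauchy estimate on the polytorus of radii $|t_i|=(k(\alpha)/r)^{w_i}$, after the obvious optimisation, yields $|a_\alpha|\le |f|_{\eta^r}\,(er/k(\alpha))^{k(\alpha)}$; combining this with $\prod_i|t_i|^{\alpha_i}\le\bigl(\max_i|t_i|^{1/w_i}\bigr)^{k(\alpha)}$, with a polynomial bound on the number of multi-indices of a given weighted degree, and with the maximisation $\sup_{u\ge0}(eru/k)^k e^{-r'u}=(r/r')^k$, one obtains for every $r'>r$
\[
\Bigl|f-\sum_{k(\alpha)\le K}a_\alpha t^\alpha\Bigr|_{\eta^{r'}}\le |f|_{\eta^r}\sum_{k>K}(k+1)^m\,(r/r')^{k}\longrightarrow 0\qquad(K\to\infty).
\]
Hence the Taylor truncations of $f$ converge to $f$ in $\cO_{\eta^{r'}}(N)$, a fortiori in $\cO_{exp}(N)$, which proves the density. (This can alternatively be extracted from \cite{ArAMN}.) The quantitative tail estimate in this last paragraph is the only point that is not a formal manipulation of dense subspaces and projective tensor products; everything preceding it is bookkeeping.
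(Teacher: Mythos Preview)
Your proposal is correct and follows essentially the same route as the paper: both factor $\theta$ through the isomorphism of Theorem~\ref{fexpdec} and reduce density to the single claim that $\cR(B/E)$ is dense in $\cO_{exp}(B/E)$. The paper simply asserts this density as a consequence of Theorem~\ref{exptypdesc} (i.e.\ of \cite{ArAMN}), whereas you spell out an explicit Taylor-truncation argument with anisotropic Cauchy estimates; that is a welcome elaboration of a step the paper leaves implicit, not a different strategy.
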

\begin{proof}
Only density is left to prove. Theorem~\ref{exptypdesc} implies
that $\cR(B/E)$ (the polynomials) is dense in $\cO_{exp}(B/E)$.
Therefore the image of $\cR(E)\otimes \cR(B/E) \otimes \cR(L)$
under our homomorphism is dense in $\cO_{exp}(G)$.
\end{proof}

For a connected complex Lie group  $G$, we consider the
natural embeddings  $j\!:\cO_{exp}(G)\to \cO(G)$ and
$j_0\!:\cO_{exp}(G/\LinC(G))\to \cO(G/\LinC(G))$. Also, we remind
the reader that $\wt\si\!:\cO_{exp}(G/\LinC(G))\to \cO_{exp}(G) $
from Theorem~\ref{redliexpf} is a topological isomorphism.

\begin{thm}\label{expclin}
Let $G$ be a connected  complex Lie group. Then

{\em (A)} $j_0\wt\si^{-1}\!:\cO_{exp}(G)\to \cO(G/\LinC(G))$ is an
Arens-Michael envelope.

{\em (B)} $G$ is linear iff it is a Stein group and
$j\!:\cO_{exp}(G)\to \cO(G)$ is an Arens-Michael envelope.
\end{thm}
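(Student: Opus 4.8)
The plan is to reduce~(A) to the linear case and then feed the decomposition of Theorem~\ref{fexpdec} into the Arens--Michael envelope functor. Since $\wt\si$ is a $\Ptens$-algebra isomorphism (Theorem~\ref{redliexpf}(B)), the map $j_0\wt\si^{-1}$ is an Arens--Michael envelope if and only if $j_0\colon\cO_{exp}(G/\LinC(G))\to\cO(G/\LinC(G))$ is one, and $G/\LinC(G)$ is linear; so it is enough to prove: for a connected linear complex Lie group $H$ the inclusion $j\colon\cO_{exp}(H)\to\cO(H)$ is an Arens--Michael envelope. Fix $H=B\rtimes L$ and the exponential radical $E$ as in Theorem~\ref{fexpdec}. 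That theorem identifies $\cO_{exp}(H)$ with $\cR(E)\Ptens\cO_{exp}(B/E)\Ptens\cR(L)$ through the map $\tau$; but $\tau$ is at the same time a biholomorphic equivalence of $E\times B/E\times L$ onto $H$ (Theorem~\ref{qitiG}), and $\cO$ of a product of complex manifolds is the complete projective tensor product of the factors, so the same map identifies $\cO(H)$ with $\cO(E)\Ptens\cO(B/E)\Ptens\cO(L)$. Under these identifications $j$ becomes the tensor product of the three inclusions $\cR(E)\hookrightarrow\cO(E)$, $\cO_{exp}(B/E)\hookrightarrow\cO(B/E)$, $\cR(L)\hookrightarrow\cO(L)$.

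It remains to show that each of those three inclusions is an Arens--Michael envelope, and that the Arens--Michael envelope functor carries a complete projective tensor product of commutative $\Ptens$-algebras to the tensor product of their envelopes. For the factors: $E$ is unipotent, so $\cR(E)$ is a polynomial algebra in $\dim E$ variables with its strongest locally convex topology, and it is classical that its Arens--Michael envelope is the algebra of entire functions $\cO(E)$. The case of $\cO_{exp}(B/E)$ with $B/E$ simply connected nilpotent is treated in \cite{ArAMN} (alternatively it can be extracted from Theorem~\ref{exptypdesc} by arguing as in \cite[Lem.~6.6]{Ak08}). Finally $\cO_{exp}(L)=\cR(L)$ carries the strongest locally convex topology by Theorem~\ref{redreg}, and $\wh{\cR(L)}=\cO(L)$ for connected linearly complex reductive $L$; for $L=\GL_m(\CC)$ this is known, and the general case follows by writing $\cR(L)=\cR(\GL_m(\CC))/I$ with $I$ a closed ideal (all subspaces are closed in the strongest topology) and using that the Arens--Michael envelope of a quotient by a closed ideal is the corresponding quotient of the envelope.

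For the tensor-product statement, put $A_1=\cR(E)$, $A_2=\cO_{exp}(B/E)$, $A_3=\cR(L)$ and let $\wh A_i$ be the matching holomorphic algebra. The space $\wh A_1\Ptens\wh A_2\Ptens\wh A_3$ is an Arens--Michael algebra (a projective limit of Banach algebras of the form $B_1\Ptens B_2\Ptens B_3$), the natural map from $A_1\Ptens A_2\Ptens A_3$ has dense range, and given a continuous homomorphism $\phi$ into an Arens--Michael algebra $D=\varprojlim_\alpha D_\alpha$ one reduces to $D$ Banach, restricts $\phi$ to each $A_i$ (inserting the unit in the other two slots), extends each restriction to $\wh A_i\to D$ by the factorwise statements, observes that the three extensions have pairwise commuting ranges (the ranges of the $\phi|_{A_i}$ commute in $D$, hence so do their closures, by joint continuity of multiplication in $D$), multiplies them to a homomorphism on the algebraic tensor product, which is continuous for the projective prenorm because $D$ is Banach, and extends it to $\wh A_1\Ptens\wh A_2\Ptens\wh A_3\to D$; this extension agrees with $\phi$ on the dense algebraic tensor product and is the unique such. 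Combined with the previous paragraph, $j\colon\cO_{exp}(H)\to\cO(H)$ is an Arens--Michael envelope, which proves~(A).

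For~(B): if $G$ is linear it is a closed subgroup of some $\GL_m(\CC)$, hence a closed submanifold of a Stein manifold and therefore Stein, while $\LinC(G)=\{1\}$ makes $\wt\si$ and $j_0$ into $\id$ and $j$, so~(A) gives the statement. Conversely, let $G$ be Stein with $j$ an Arens--Michael envelope. By~(A) and uniqueness of Arens--Michael envelopes there is a topological algebra isomorphism $\chi\colon\cO(G)\to\cO(G/\LinC(G))$ with $\chi j=j_0\wt\si^{-1}$. The pullback homomorphism $\si^{\circ}\colon\cO(G/\LinC(G))\to\cO(G)$, $f\mapsto f\circ\si$, satisfies $\si^{\circ}j_0=j\wt\si$, hence $\si^{\circ}(j_0\wt\si^{-1})=j$; the same holds for $\chi^{-1}$, so by the universal property of the Arens--Michael envelope $j_0\wt\si^{-1}$ we get $\si^{\circ}=\chi^{-1}$, an isomorphism. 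Thus every holomorphic function on $G$ is constant on the cosets of $\LinC(G)$; as $G$ is Stein, holomorphic functions separate points, so $\LinC(G)=\{1\}$ and $G$ is linear. The main obstacle here is the reductive factor $\wh{\cR(L)}=\cO(L)$ (and, if it is not imported from \cite{ArAMN}, the nilpotent factor): a naive estimate controlling a submultiplicative prenorm on $\cR(L)$ by a sup-prenorm over a compact subset of $L$ fails because of the ambiguity in choosing a polynomial representative of a regular function, so one genuinely uses the algebraic structure (reduction to $\GL_m(\CC)$, or a direct Peter--Weyl argument); the rest — the reduction to the linear case, the transport through $\tau$, and the behaviour of the Arens--Michael envelope under $\Ptens$ — is routine.
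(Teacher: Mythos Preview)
Your proof is correct but follows a different route from the paper. For the linear case the paper argues in a single stroke: it observes that $E\times B/E\times L$ is an affine algebraic variety, cites \cite[Ex.~3.6]{Pir_qfree} for the fact that $\cR(X)\to\cO(X)$ is an Arens--Michael envelope for every affine complex variety~$X$, and then uses only the density of $\te\colon\cR(E\times B/E\times L)\to\cO_{exp}(G)$ (Corollary~\ref{densrexp}) together with the general lemma \cite[Lem.~2.3]{ArAMN} that factoring an Arens--Michael envelope $\io=j\te$ through an epimorphism~$\te$ forces $j$ to be an envelope as well. This bypasses your factorwise analysis and the tensor-product compatibility of the envelope functor entirely; in particular no separate treatment of the reductive factor $\cR(L)\to\cO(L)$ or of the nilpotent factor $\cO_{exp}(B/E)\to\cO(B/E)$ is needed. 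Your approach, by contrast, is more self-contained (it does not import the affine-variety result from \cite{Pir_qfree}), and your converse argument for~(B) --- deducing $\LinC(G)=\{1\}$ directly from the surjectivity of $\si^{\circ}$ and holomorphic separability of a Stein group --- is slightly more elementary than the paper's invocation of Forster's Duality Theorem~\cite{Fo67}.
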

\begin{proof}
First, we show the condition from part~(B) is necessary. Suppose
that $G$ is linear. Then it is clearly a Stein group. Let $B$,
$L$, and $E$ be as above and note that $E\times B/E\times L$ is an
affine algebraic variety; so the obvious embedding
$$
\io\!:\cR(E\times B/E\times L)\to \cO(E\times B/E\times L)
$$
is an Arens-Michael envelope \cite[Ex.~3.6]{Pir_qfree}.
Identifying   $\cO(E\times B/E\times L)$  with $\cO(G)$, we obtain
that $\io=j\te$, where $\te$ is defined in
Corollary~\ref{densrexp}. The homomorphism~$\te$, having dense
range, is an epimorphism. It follows from \cite[Lem.~2.3]{ArAMN}
that the factorization  $\io=j\te$ of the Arens-Michael envelope
homomorphism $\io$ on the epimorphism $\te$ implies that $j$ is
also an Arens-Michael envelope homomorphism.

Next, to prove part~(A) note that $G/\LinC(G)$ is linear. The
above argument shows that $j_0$ is an Arens-Michael envelope, so
is $j_0\wt\si^{-1}$.

Finally, we demonstrate the sufficiency from Part~(B). Suppose
that $G$ is a Stein group and $j$ is an Arens-Michael envelope.
Since so is $j_0\wt\si^{-1}$, the universal property of  the
Arens-Michael enveloping functor  implies that $\cO(G/\LinC(G))\to
\cO(G) $ is a topological isomorphism of Stein algebras. By
Forster's Duality Theorem \cite{Fo67}, the quotient map $G\to
G/\LinC(G)$ is a biholomorphic equivalence, therefore $\LinC(G)$
is trivial.
\end{proof}

We finish with examples.

\begin{exm}
Let $\fg$ be the $2$-dimensional solvable complex Lie algebra
with basis $\{e_1,e_2 \}$ and  commutation relation
$[e_1,e_2]=e_2$. Then $\fe=[\fg,\fg]=\CC e_2$. Consider  the
simply connected complex Lie group $G$ with $\fg$ as the Lie
algebra (cf.~\eqref{2dimmul}).  Let
 $(s,t)$ be the canonical coordinates of second type on $G$, i.e.,
$$
g=\exp(se_1)\exp(te_2) \qquad(g\in G)\,.
$$
In this coordinates, any $f\in\cO_{exp}(G)$ has the form
$$
f(s,t)=\sum_{n} f_{n}(s) t^{n}\,,
$$
where each $f_{n}$ is an entire function of exponential type on $\CC$, i.e.,
$$
|f_{n}(s)|\le C e^{r|s|}\qquad (s\in\CC)
$$
for some $C>0$ and $r\in \R_+$. Note that this decomposition
can also be obtain from \cite[Prop.~5.2]{Pir_qfree}.

Another group with the same Lie algebra is the '$az+b$'-group $G_1$,
consisting of matrices of the form
$$
 \begin{pmatrix}a& b \\
 0&1
\end{pmatrix}, \qquad (a\in\CC^\times,\,b\in\CC)\,.
$$
Unlike $G$, the group $G_1$ is algebraic and
$\cO_{\exp}(G_1)=\cR(G_1)$, i.e., a function of exponential type
has a decomposition $f(a,b)=\sum_{n} f_{n}(a) b^{n}$ where
all $f_n$ are Laurent polynomials in $a$.
\end{exm}

\begin{exm} \label{6dim}
Let $\fg$ be the $6$-dimensional complex Lie algebra with basis
$\{e_1,e_2 ,e_3, f_1,f_2,f_3\}$ and commutation relations
\begin{gather*}
[e_1,e_2]=e_3 ,\\
[e_2,f_1]=f_1,\; [e_2,f_2]=f_2,\;[e_2,f_3]=2f_3, \\
[e_3,f_1]=f_1,\; [e_3,f_2]=-f_2,\;[e_3,f_3]=0, \\
[f_1,f_2]=f_3
\end{gather*}
the  undefined brackets being zero. To see that $\fg$ is a Lie
algebra it is sufficient to note that $\fg$  is an iterated
semidirect sum $\fh_1\ltimes (\fh_2\ltimes \fh_3)$, where
$\fh_1=\spn\{e_1\}$, $\fh_2=\spn\{e_2,e_3\}$, and
$\fh_3=\spn\{f_1,f_2,f_3\}$.

Then $e_3, f_1,f_2,f_3$ is a basis for $\fg_2\!:=[\fg,\fg]$ but
$f_1,f_2,f_3$ is a basis for $\fg_\infty=\fg_3\!:=[\fg,\fg_2]$.
Since $\fg$ is solvable, we have $\fe= \fg_\infty$. So $\fe$ and
$\fg/\fe$ are both isomorphic to the $3$-dimensional complex
Heisenberg algebra.

Let $G$ be the simply connected complex Lie group having $\fg$ as
the Lie  algebra. Consider the  coordinates
$(s_1,s_2,s_3,t_1,t_2,t_3)$ defined by
$$
g=\exp(s_1e_1+s_2e_2+s_3e_3)\exp(t_1f_1+t_2f_2+t_3f_3) \qquad(g\in G)
$$
and identify $G$ with $\CC^6$.  Thus any $f\in\cO_{exp}(G)$ has the form
$$
f(s_1,s_2,s_3,t_1,t_2,t_3)=\sum_{n_1,n_2,n_3}
f_{n_1,n_2,n_3}(s_1,s_2,s_3) t_1^{n_1} t_2^{n_2} t_3^{n_3}\,,
$$
where each $f_{n_1,n_2,n_3}$ is an entire function such that
$$
|f_{n_1,n_2,n_3}(s_1,s_2,s_3)|\le C
e^{r\max\{|s_1|,|s_2|,|s_3|^{1/2}\}}\qquad (s_1,s_2,s_3\in\CC)
$$
for some $C>0$ and $r\in \R_+$ (cf. \cite[Exm.~3.4]{ArAMN}).
\end{exm}

\begin{exm}
Fix $n\in\N$ and consider the standard action of $\SL_n(\CC)$ on
$\CC^n$. Set $G\!:=\CC^n\rtimes \SL_n(\CC)$. The Lie algebra of
$G$ is the semidirect sum $\fg=\CC^n\rtimes \mathfrak{sl}_n(\CC)$.
Then the radical $\fr\cong  \CC^n$ and  $\mathfrak{sl}_n(\CC)$ is
a Levi complement. It is easy to see that $\fr_\infty=0$ but $\fe
\cong \CC^n$; so $\fr_\infty\ne \fe$. Thus
$\cO_{exp}(G)=\cR(\CC^n\times \SL_n(\CC))$, i.e., every
holomorphic function of exponential type is a polynomial in
coordinates on $\CC^n$ and matrix elements of $\SL_n(\CC)$.
\end{exm}

The reader can also find another example in
\cite[Exm.~3.5]{ArAMN}.

\end{document}